\newtheorem{theorem}{Theorem}[section]
\newtheorem{proposition}[theorem]{Proposition}
\newtheorem{lemma}[theorem]{Lemma}
\newtheorem{corollary}[theorem]{Corollary}
\newtheorem{remark}{Remark}[section]
\theoremstyle{definition}
\newtheorem{definition}[theorem]{Definition}
\newcommand{\R}{\mathbb{R}}
\newcommand{\N}{\mathbb{N}}
\newcommand{\Z}{\mathbb{Z}}
\newcommand{\T}{\mathbb{T}}
\newcommand{\lap}{\bigtriangleup}
\newcommand{\E}{\mathbb E}
\newcommand{\re}{\textrm{Re}}
\newcommand{\ent}[1]{\lfloor #1\rfloor}
\newcommand{\mE}{\mathcal E}
\newcommand{\mH}{\mathcal H}
\begin{document}
\newcommand{\Hloc}{H^{1/2-}_{\textrm{loc}}}

\title{Invariant measure for the Klein-Gordon equation in a non periodic setting}
\author{Anne-Sophie de Suzzoni\footnote{Universit\'e Paris 13, Sorbonne Paris Cit\'e, LAGA, CNRS ( UMR 7539), 99, avenue Jean-Baptiste Cl\'ement, F-93430 Villetaneuse, France}}

\maketitle

\begin{abstract} In this paper, we build a Gibbs measure for the 1d cubic Klein-Gordon equation on $\R$ with a decreasing non linearity, in the sense that the non linearity $f^3$ is multiplied by $\chi$ where $\chi$ is a sufficiently integrable non negative function. We prove that this equation is almost surely globally well-posed in $\Hloc$ with respect to this measure and that the measure is invariant under the flow of the equation. \end{abstract}

\tableofcontents

\section{Introduction}

Our aim in this paper is to build a Gibbs measure such as in \cite{TVinvBO, NORSinv, BTTlong,dSinv,Ohinv,Binvkdv} for the 1 dimensional defocusing cubic Klein-Gordon equation with a decreasing non linearity : 
\begin{equation}\label{nlkg}
\partial_t^2 f + (1-\partial_x^2) f + \chi f^3 = 0
\end{equation}
where $\chi$ is a sufficiently integrable non negative function, and where the variable $x$ belongs to $\R$. We prove the invariance of this Gibbs measure under the flow of \eqref{nlkg}. 

As inspired by the works of Lebowitz-Rose-Speers \cite{LRS}, Bourgain \cite{Binvkdv}, Zhidkov \cite{Zoni}, Burq-Thomann-Tzvetkov \cite{BTTlong}, a Gibbs measure is usually built in the following way. We consider a Hamiltonian equation with Hamiltonian
$$
H(u) = H_c(u) + H_p(u) \; .
$$
where $H_p$, the potential energy, corresponds to the non linearity of the equation and $H_c$, the kinetic energy, can be written 
$$
H_c(u) = \langle u, Lu \rangle_X
$$
where $L$ is a Hermitian positive operator on a Hilbert space $X$ ($\langle, \rangle_X$ is the scalar product on $X$). In the case of \eqref{nlkg}, this operator is $1-\lap$. 

In a first time, let us admit that $X$ and $L$ are such that there exists an orthonormal basis of $X$, $(e_n)_{n\in \N}$ satisfying for all $n$, $Le_n = \lambda_n e_n$, with $\lambda_n > 0$, as is the case in \cite{TVinvBO,BTTlong,Binvkdv,dSinv}. Let $(g_n)_{n\in \N}$ be a sequence of independent Gaussian variables of law $\mathcal N (0,1)$ on a probability space $(\Omega, \mathcal A, \mathbb P)$. We define a sequence of random variables
$$
\varphi_N  = \sum_{n=0}^N \frac1{\sqrt{\lambda_n}} e_n g_n\; .
$$
The sequence $(\varphi_N)_N$ converges in a Hilbert space $L^2(\Omega,Y)$, for instance with
$$
Y = \big\{ u = \sum_n u_n e_n \; \Big| \; \sum_n \lambda_n n^{-(1+\varepsilon)} |u_n|^2 < \infty \big\}
$$
for $\varepsilon > 0$, towards a random variable $\varphi$. By calling $\mu$ the measure on $Y$ defined as 
$$
\mu(A) = \mathbb P (\varphi^{-1}( A))
$$
for all measurable sets $A$, we can interpret $\mu$ as 
\begin{eqnarray*}
d\mu \Big( \sum_n u_n e_n \Big) & " = " & e^{- \sum_n \lambda_n^2 |u_n|^2} \prod_n \frac{\lambda_n du_n d\overline{u_n}}{2\pi} \\
& "="& de^{-H_c(u)}dL(u) 
\end{eqnarray*}
where $L$ would be the Lebesgue measure and $d$ a normalisation factor. This identity makes sense in finite dimension. 

The Gibbs measure $\rho$ is built as 
$$
d\rho(u) = \frac{1}{D}e^{-H_p(u)}d\mu(u) 
$$
where $D$ is a normalisation factor.

We can interpret $\rho$ as 
$$
d\rho (u) "=" \frac{d}{D}e^{-H(u)}dL(u)\; .
$$
In \cite{TVinvBO,BTTlong,Binvkdv,dSinv}, the proofs of the invariance of the Gibbs measure associated to an equation under the flow of this equation use the fundamental arguments that the Lebesgue measure is invariant under Hamiltonian flows, and that the Hamiltonian is a conserved quantity.

For the Klein-Gordon equation in a periodic setting, $L= 1-\lap$ and $X = L^2(\T)$ satisfy these properties, as $Le_n = (1+n^2) e_n$ with $e_n : x\mapsto e^{inx}$. The method presented above produces a measure on $H^s(\T)$ with $s<1/2$. This measure is invariant under the flow of the periodic cubic Klein-Gordon equation.

Here, we are interested in the non-periodic setting. Our aim is to build a measure of the form
$$
d\rho(u) "=" \frac{d}{D}e^{-H(u)} dL(u) \; .
$$
We call $(W_n)_{n\in \R}$ a Brownian motion and 
$$
\varphi_{N,M} (\omega, x) = \sum_{n= -NM}^{NM-1} \Big( W_{\frac{n+1}{N}} - W_{\frac{n}{N}} \Big) \frac{1}{\sqrt{1+ \frac{n^2}{N^2}}} e^{ixn/N} \; .
$$
By comparison with the periodic setting, $1+ \frac{n^2}{N^2} $ plays the role of $\lambda_n$ and $W_{\frac{n+1}{N}} - W_{\frac{n}{N}}$ the one of $g_n$. At $M$ and $x$ fixed, $(\varphi_{N,M}(.,x))_{N\in \N^*}$ converges towards a Itô integral
$$
\varphi_M (.,x)  = \int_{-M}^M \frac1{\sqrt{1+n^2}} e^{inx} dW_n\; .
$$
The limit $M \rightarrow \infty$ is similar to its counterpart in the periodic setting. We get a limit $\varphi$ in
$$
\Hloc  = \big\{ u \; \Big| \; \forall R , s<1/2, \; p_{R,s} (u) = \int_{-R}^R | (1-\lap)^{s/2} u|^2(x) dx < \infty \big\}
$$ 
whose topology is the metric induced by the distance
$$
d(u,v)  = \sum_{R\in \N^*} \sum_{l\in \N^*} 2^{-(R+l)} \frac{p_{R,1/2-1/l}(u-v)}{1+ p_{R,1/2-1/l}(u-v)} \; .
$$
This space is complete. This measure appeared previously in the xork by McKean-Vaninsky, \cite{MVsta}.

We could have chosen another topology for which $\varphi_{N,M}$ would have converged but this one has the advantage that the flow of the linear Klein-Gordon equation, $e^{it\sqrt{1-\lap}}$ is continuous for the distance $d$, a property needed would it be only to give a meaning to the definition of the invariance of $\rho$.

The measure $\mu$ induced by $\varphi$ is invariant under the linear flow. 

We built the Gibbs measure $\rho$ for \eqref{nlkg} as 
$$
d\rho(u) = \frac1{D}e^{-\frac12 \int \chi |u|^4} d\mu(u)\; ,
$$
in an analogous way to the periodic case. We prove that if the initial datum of \eqref{nlkg} is taken in the support of $\rho$, then there exists a unique global solution. We call the flow of \eqref{nlkg} $\psi(t)$. We prove that $\psi(t)$ is continuous on the support of $\rho$, hence measurable, in $\Hloc$ with regard to the initial datum, and finally that $\rho$ is invariant under the flow $\psi(t)$, that is, for all measurable set $A \subset \Hloc$,
$$
\rho (\psi(t)^{-1}(A)) = \rho(A) \; .
$$

We remark that the restriction on $\chi$, detailed in Section 2, are probably not optimal. We also note that for all $u\in \Hloc$, $u$ is $\rho$-almost surely not in $L^2$. It is unclear to us whether the equation \eqref{nlkg} is well-posed or ill-posed when the initial datum is in in $\Hloc$. The local well-posedness is made easier by the presence of $\chi$ in the non-linearity but the global well-posedness cannot be obtained using the same energy estimates as in Subsection \ref{subsec-exten}. We mention \cite{CCTill} for proofs of ill-posedness in the close case of the defocusing non linear wave equation. For the local well-posedness, we use the same techniques as in \cite{twon}.

The choice of Klein-Gordon has been made because the propagation speed is finite (the linear flow is continuous for the topology of $\Hloc$, as opposed to the Schrödinger equation) and because the small frequencies are not an obstacle to the construction of the measure (as opposed to the wave equation).

Let us state the result more precisely.

\begin{theorem} There exists a measure $\rho$ on $\Hloc$ such that
\begin{itemize}
\item for all $u$ in the support of $\rho$, $u$ does not belong to $L^2$,
\item the flow $\psi(t)$ of \eqref{nlkg} is globally well-defined on the support of $\rho$ and is continuous with respect to the initial datum for the topology of $\Hloc$,
\item the measure $\rho$ is invariant under the flow $\psi(t)$.
\end{itemize}
\end{theorem}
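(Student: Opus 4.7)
The plan is to prove the three conclusions in turn, building on the construction of $\varphi$, $\mu$ and $\rho$ outlined in the introduction. For the first bullet I would exploit the stationarity of the Gaussian process $\varphi$ in $x$: the covariance
$$
\E[\varphi(x)\overline{\varphi(y)}] = \int_{-\infty}^{+\infty} \frac{e^{i(x-y)n}}{1+n^2}\, dn
$$
depends only on $x-y$ and equals a positive constant $c$ at $x=y$, so $\E\int_{-R}^R |\varphi|^2\, dx = 2Rc$. The spectral measure $(1+n^2)^{-1}dn$ being absolutely continuous, the process is mixing; a Birkhoff / Borel--Cantelli argument then gives $\|\varphi\|_{L^2(\R)}^2 = +\infty$ almost surely, and this transfers to $\rho$ by absolute continuity with respect to $\mu$.

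For local well-posedness I would use a standard Bourgain--McKean splitting $f = u + v$, where $u$ is the free linear Klein--Gordon evolution of the rough Gaussian initial data (which a.s.\ lies in $\Hloc$) and $v$ is a smoother remainder solving
$$
\partial_t^2 v + (1-\lap) v + \chi(u+v)^3 = 0, \qquad v|_{t=0} = 0, \quad \partial_t v|_{t=0} = 0.
$$
Since $\chi$ is sufficiently integrable and $u\in \Hloc$ is almost surely in $L^p_{\text{loc}}$ for every finite $p$ (by one-dimensional Sobolev embedding up to exponent $1/2$), the map $w \mapsto \chi(u+w)^3$ sends bounded sets of $C_t(H^1\times L^2)$ into $L^1_t L^2_x$ on a short time interval, with Lipschitz constant small in time. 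A Banach fixed point then yields local existence, uniqueness, and Lipschitz dependence of $v$ on $u$ in the energy space. Continuity of $\psi(t)$ in $\Hloc$ follows by combining this with the continuity of the linear flow already noted in the introduction.

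For globality I would rely on the modified energy
$$
E(t) = \frac12\int |\partial_t v|^2 + |\sqrt{1-\lap}\, v|^2\, dx + \frac14 \int \chi(u+v)^4\, dx,
$$
whose derivative along solutions reduces to cross terms involving $\partial_t u$ and powers of $u$, controlled by $\|\chi u^k\|_{L^2}\|\partial_t v\|_{L^2}$ for $k=1,2,3$. Because $\chi u^k$ is almost surely in $L^\infty_{t,\text{loc}} L^2_x$ by the $\Hloc$ regularity of $u$ and the integrability of $\chi$, a Gronwall argument forbids any blow-up of $\|v\|_{H^1}+\|\partial_t v\|_{L^2}$, so the local solution extends globally, proving the second bullet.

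For invariance I would follow the Bourgain--Zhidkov scheme. The truncated data $\varphi_{N,M}$ lives in a finite-dimensional symplectic subspace on which a projected and $M$-cut-off version of \eqref{nlkg} becomes a finite-dimensional Hamiltonian system; Liouville's theorem together with conservation of the truncated Hamiltonian gives invariance of the associated Gibbs measure $\rho_{N,M}$. Passing to the limit requires convergence $\rho_{N,M} \to \rho$, which is a byproduct of the construction of $\varphi$ and of dominated convergence for the potential weight, together with convergence of the truncated flows to $\psi(t)$ in the $\Hloc$ topology, uniformly on compact time intervals and on bounded sets of phase space. The main obstacle, and the place where the integrability of $\chi$ is critical, lies precisely in this last step: establishing a stability estimate for the remainder equation that is uniform in the truncation parameters $N$ and $M$, which is where the bulk of the technical work resides.
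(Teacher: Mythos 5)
The decisive gap is in your globalization step. Along solutions of the remainder equation, your modified energy satisfies
\begin{equation*}
\frac{d}{dt}E(t) \;=\; \int \chi\,(u+v)^3\,\partial_t u \,dx ,
\end{equation*}
because the terms containing $\partial_t v$ cancel against the equation; what survives is a pairing against $\partial_t u$, not $\partial_t v$. On the support of $\mu$ the free evolution satisfies $u(t)\in \Hloc$ but $\partial_t u(t)$ only lies in $H^{-1/2-}_{\textrm{loc}}$: it is almost surely not a locally integrable function, so this cross term is not controlled by $\|\chi u^k\|_{L^2}\|\partial_t v\|_{L^2}$ as you claim --- it is not even obviously well defined, since $\chi(u+v)^3$ has at best $H^{1/2-}$ regularity and cannot be paired with $H^{-1/2-}$. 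If instead you keep only $(\re v)^4$ in the potential part (as the paper does for its truncated flows in Subsection 4.2), the cross term becomes $\int \sqrt{1-\lap}\,\overline v\,\chi\big((\re(u+v))^3-(\re v)^3\big)$, which requires a bound on $\|\chi^{1/2}\re v\|_{L^6}^2$; estimating this by $\|v\|_{H^1}^2\lesssim E$ yields a superlinear (Riccati-type) differential inequality, and Gronwall no longer excludes finite-time blow-up. The paper states explicitly that these energy estimates do not globalize $\psi$; they globalize only the finite-dimensional flows $\psi_k$, and even there only through an $L^6\to L^4$ equivalence-of-norms constant $C_k$ that degenerates as $k\to\infty$.

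What actually closes the argument --- and is missing from your proposal --- is Bourgain's invariant-measure globalization: the invariance of $\rho_k$ under $\psi_k$ combined with Gaussian tail estimates shows that the set of data for which $\chi^{1/q_n}L(\tau)\psi_k(t_n)u$ stays bounded in $L^{q_n}$ for all $n$ has $\rho_k$-measure at least $1-C/\Lambda^8$ uniformly in $k$; passing to the limit produces a full-measure set $A$ on which the local theory is iterated on successive intervals $[t_n,t_{n+1}]$. The global continuity of $\psi(t)$ asserted in the second bullet also comes from this set, via the uniform convergence of $\psi_k(t)$ to $\psi(t)$ on $A(\Lambda)$, not from the local contraction alone. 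Your remaining steps are essentially the paper's: the splitting $u=L(t)u_0+v$ with a contraction in $\mathcal C_t H^1$ is Proposition 4.1, and the Liouville/weak-convergence scheme for invariance is Sections 3 and 6. For the first bullet your ergodic-theorem argument (stationarity plus absolutely continuous spectral measure) is a legitimate alternative to the paper's route, which computes $\E\int_{-R}^R|\varphi|^2\gtrsim R$ and then invokes Fernique's theorem to upgrade the divergence to an almost sure statement; note that some such zero--one ingredient is indispensable, since an infinite expectation alone does not give almost sure divergence.
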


We remark that this leads to the existence of at least one solution with initial datum not in $L^2$.

The strategy to prove the invariance and the global well-posedness of the equation is similar to the one in the periodic case. As in previous works , we approach $\psi(t)$ by a sequence $(\psi_k(t))_k$, where $\psi_k(t)$ is the flow of a Hamiltonian equation on a finite dimensional space $E_k$. We then approach $\rho$ by measures $\rho_k$ whose supports are included in $E_k$. 

We prove that $\rho_k$ converges weakly towards $\rho$ and that $\rho_k$ is invariant under $\psi_k(t)$ with classical methods such as Liouville's theorem. We extend $\psi_k(t)$ to $\Hloc$. We then prove that locally in time, \eqref{nlkg} is well-posed and that $\psi_k(t)u$ converges towards $\psi(t) u$ in $\Hloc$. We prove that $\psi_k(t)$ and $\psi(t)$ are continuous with respect to the initial datum globally and locally in time respectively. These properties on $\psi$ and $\psi_k$ do not require any probabilistic arguments except knowing where the support of $\rho$ is included.

We build a set $A$ of full $\rho$-measure such that for all $u \in A$, $\psi_k(t) u$ is controlled for a set of countable times $t_n$. This is possible because $\psi_k(t_n)$ is continuous on $\Hloc$, $\rho_k$ converges weakly towards $\rho$, and $\rho_k$ is invariant under $\psi_k(t)$. We then propagate the local properties (existence and uniqueness of $\psi(t)$, continuity of $\psi(t)$ with respect to the initial datum, convergence of $\psi_k(t)u$ towards $\psi(t)u$) from $[t_{n-1},t_{n}]$ to $[t_n,t_{n+1}]$ to make them global. This is possible thanks to the finite times controls. 

Finally, we prove the invariance of $\rho$ under $\psi(t)$, following the heuristic idea that for a measurable set $B$, $\psi(t)^{-1}(B)$ is the limit of $\psi_k(t)^{-1}(B)$, $\rho$ is the limit of $\rho_k$ and $\rho_k(\psi_k(t)^{-1}(B)) = \rho_k(B)$.

The main difficulty compared to the periodic case, apart from defining the measure, is the particular attention that has to be paid to the continuity of the flows with respect to the initial datum, because of our less usual topology. 

We also mention the works by Bourgain \cite{Binvinf} and Xu \cite{samxu}, which are close to our study.

\paragraph{Organisation of the paper} In Section 2, we build a measure $\mu$ on $\Hloc$ following the idea mentioned above and we prove its invariance under the flow of the linear Klein-Gordon equation.

In Section 3, we build the Gibbs measure $\rho$ and its approximation $\rho_k$ and prove the weak convergence of $\rho_k$ towards $\rho$. We also introduce the approaching flows $\psi_k$ on the finite dimensional spaces and prove that $\rho_k$ is invariant under $\psi_k(t)$. 

In Section 4, we prove the local well-posedness of \eqref{nlkg}, extend $\psi_k(t)$ to $\Hloc$, prove that locally in time $\psi_k(t)u$ converges towards $\psi(t) u$ in $\Hloc$, and that $\psi_k(t)$ and $\psi(t)$ are continuous with respect to the initial datum. 

In Section 5, we extend the local properties to global times and prove the invariance of $\rho$ under $\psi(t)$.

Notation : In the rest of the paper, the norm $\|.\|_{X_*,Y_\circ}$ means
$$
\|u\|_{X_*,Y_\circ} = \| * \mapsto \|\circ \mapsto u(x,\circ) \|_Y \|_X
$$
where $*$ and $\circ$ may be replaced by time variables ($t$, $\tau$), space variables ($x$), or random events ($\omega$). The space $X_*,Y_\circ$ is the space of functions normed by $\|.\|_{X_*,Y_\circ}$.

\section{Invariance under the linear flow}
We study the equation 
\begin{equation}\label{maineq} 
\left \lbrace{\begin{tabular}{ll}
$\partial_t^2 f + (1-\lap) f + \chi f^3 = 0$ \\
$f_{|t=0} = f_0 \; ,\; (\partial_t f)_{|t=0} = f_1$
\end{tabular}} \right.
\end{equation}
We assume that $\chi$ satisfies 
$$
0 \leq \chi(x) \leq C (\sqrt{1+x^2})^{-3\alpha}
$$
for some $\alpha > 1$. In particular, $\chi$ belongs to $L^1 \cap L^\infty$ and  $\sqrt{1+x^2}\chi$ belongs to $L^1$.

For instance, if $A$ is a measurable set of $\R$ with finite Lebesgue measure, we can take $\chi = \frac1{\sqrt{1+x^2}} 1_A$ where $ 1$ is the indicative function. Note that we do not require more regularity on $\chi$. 

The fact that $\chi \geq 0$ is due to energetic reasons. With this restriction, we have that the potential energy is of the same sign as the kinetic energy. In other words, the equation is defocusing.

The restriction $\sqrt{1+x^2} \chi \in L^1$ appears in the definition of the Gibbs measure and the proofs of some of its properties.

The restriction $ \chi \in L^\infty$ appears in the analysis of the PDE, it is technical and could probably be improved by a more careful analysis.

If $K$ is a measurable bounded set of $\R$, then we can take $\chi = 1_K$. We can interpret this $\chi$ as the fact that the system described by $f$ evolves in a medium that allows interactions in $K$ but not in its complementary in $\R$. Note that $K$ does not have to be compact. For other $\chi$, we can interpret that the interactions are depending on where they occur.

For the rest of the paper, we replace the equation on $f$ by the equivalent equation on $u = f + i(1-\lap)^{-1/2} \partial_t f$. We have that $f = \re u$ solves \eqref{maineq} if and only if $u$ solves 
\begin{equation}\label{NLKG}
\left \lbrace{ \begin{tabular}{ll}
$i\partial_t u = \sqrt{1-\lap} u + (1-\lap)^{-1/2} (\chi (\re u)^3) $ \\
$ u_{|t=0} = u_0 := f_0 + i (1-\lap)^{-1/2} f_1 $ \end{tabular}}\right. \; .
\end{equation}

The linear part of \eqref{NLKG} is 
$$
\left \lbrace{\begin{tabular}{ll}
$i\partial_t u = \sqrt{1-\lap} u$ \\
$u_{|t=0} = u_0$ \end{tabular}} \right. \; .
$$
In this section, we focus on this equation. We can write its solution $u(t) = L(t) u_0$ where $L(t)$ is the Fourier multiplier
$$
\widehat{L(t) f}(n) = e^{-it\sqrt{1+n^2}} \hat f (n) \; .
$$
We build here a measure $\mu$ invariant under the flow $L(t)$. However, although we need the measure $\mu$ to define $\rho$, the measure invariant under the non linear flow, the proof of the invariance of $\mu$ is not required to prove the invariance of $\rho$. This is one of the strategical differences from the compact setting where it is more convenient to use the invariance of $\mu$. Here, this strategy does not apply. Nevertheless, we prove the invariance of $\mu $ for the following reason. The proof of this invariance present the same difficulties in probability as the proof of the invariance of $\rho$, but without the difficulties due to the analysis of the studied PDE.

\subsection{Definition of the random variable \texorpdfstring{$\varphi$}{phi}}

We define $\mu$, the invariant measure under the linear flow, as the image measure through a random variable $\varphi$. In this subsection, we define $\varphi$ and give some of its properties. 

\paragraph{Definition of \texorpdfstring{$\varphi$}{phi}}

In this paragraph, we define the Gaussian variable $\varphi$ which we use to define the measure $\mu$ invariant under the linear flow.

For the rest of this paper, we call $(\Omega, \mathcal F, \mathbb P)$ a probability space and $(W_n)_{n\in \R}$ a complex Brownian motion on this space, or more accurately, the union of two Brownian motions with the same initial value.

The random variable $\varphi$ is defined as the limit of a sequence of random variables. Let us describe this sequence.

\begin{definition}\label{def-phi} Let $N,R\in \N$. We call $\varphi_{N,R}$ the random variable defined as : 
$$
\varphi_{N,R} (\omega, x) = \sum_{k=-NR}^{NR-1} \delta_{N,k} (\omega) \frac1{\sqrt{1+\frac{k^2}{N^2}}}e^{ikx/N}
$$
where $\omega \in \Omega$ is an event of the probability space, $x\in \R$ is the space variable and $\delta_{N,k} = W_{\frac{k+1}{N}}-W_{\frac{k}{N}}$.
\end{definition}

\begin{remark}\label{rem-Glaw} This random variable is a Gaussian vector. Indeed, $\varphi_{N,R}$ is entirely determined by $2N$ Gaussian variables $a_{-NR},\hdots, a_{NR-1}$ with $a_k= \delta_{k,N} (1+\frac{k^2}{n^2})^{-1/2}$. The law of $a_k$ is $\mathcal N(0, \frac1{N(1+ k^2/N^2)})$ and the $a_k$ are independent from each other. Hence they form a Gaussian vector whose law is given by the covariance matrix $M(N)$ such that
$$
M(N)_{i,j} = \E (\overline a_i a_j) = \delta_i^j \frac1{N(1+\frac{j^2}{N^2})}\; ,
$$
with $\delta_i^j = 1$ f $i=j$ and $0$ otherwise.

Its law is given by 
$$
(\mbox{det } M(N))^{-1/2}e^{-\langle a,M(N)^{-1} a\rangle}\prod_{k= -NR}^{NR-1} \frac{da_kd\overline a_k}{2\pi} 
$$
where
$$
\langle a,M(N)^{-1}a \rangle = \sum_{k= -NR}^{NR-1} |a_k|^2 N \left( 1+\frac{k^2}{N^2}\right)
$$
can be rewritten as 
$$
\frac1{2\pi}\int_{-\pi N}^{\pi N} \overline v(x) (1-\Delta)v(x)dx
$$
where $v$ is given by
$$
v(x) = \sum_{k=-RN}^{RN-1} a_k e^{ikx/N}\; .
$$
\end{remark}

\begin{lemma}Let $D = \sqrt{1-\lap}$. Let $s<1/2$. The sequence $D^s\varphi_{2^n, R}$ converges in \\$\sqrt{1+|x|^2} L^\infty_\R, L^2_\Omega$ when $n$ goes to $\infty$, uniformly in $R$. We call its limit $D^s\varphi_R$. In other words, for all $\varepsilon >0$, there exists $n_0\in \N$ such that for all $R$ and all $n\geq n_0$, 
$$
\|(1+x^2)^{-1/2}(D^s \varphi_R - D^s\varphi_{2^n,R})\|_{L^\infty_\R,L^2_\Omega}\leq \varepsilon \; .
$$\end{lemma}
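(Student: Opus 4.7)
My plan is to exhibit $D^s\varphi_{2^n,R}$ as a Riemann–Itô approximation of the stochastic integral $\int_{-R}^{R} (1+\xi^2)^{(s-1)/2} e^{ix\xi}\,dW_\xi$ and prove that the dyadic sequence is Cauchy in $\sqrt{1+|x|^2}L^\infty_\R,L^2_\Omega$ with a convergence rate $C_s\,2^{-n}$ that does not depend on $R$. Completeness of the target space then delivers the limit $D^s\varphi_R$ together with the quantitative estimate.

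For the Cauchy estimate, fix $n'\geq n$ and set $N=2^n$, $N'=2^{n'}$, $m=N'/N\in\N^*$. Since the complex Brownian increments nest, $\delta_{N,k}=\sum_{j=0}^{m-1}\delta_{N',mk+j}$, so writing $a_{N,k}(x)=(1+k^2/N^2)^{(s-1)/2}e^{ikx/N}$ one gets
\begin{equation*}
D^s\varphi_{N',R}(x)-D^s\varphi_{N,R}(x)=\sum_{k=-NR}^{NR-1}\sum_{j=0}^{m-1}\delta_{N',mk+j}(\omega)\bigl[a_{N',mk+j}(x)-a_{N,k}(x)\bigr].
\end{equation*}
The $\delta_{N',\ell}$ are independent with $\E|\delta_{N',\ell}|^2=c/N'$, so the $L^2_\Omega$-norm squared is just the sum of the squared coefficients. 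I would then apply the mean value theorem to the symbol $\eta\mapsto(1+\eta^2)^{(s-1)/2}e^{ix\eta}$, for which
\begin{equation*}
\Bigl|\tfrac{d}{d\eta}\bigl[(1+\eta^2)^{(s-1)/2}e^{ix\eta}\bigr]\Bigr|\leq C(1+|x|)(1+\eta^2)^{(s-1)/2},
\end{equation*}
and use that $|(mk+j)/N'-k/N|\leq 1/N$ to conclude
\begin{equation*}
\bigl|a_{N',mk+j}(x)-a_{N,k}(x)\bigr|^2\leq \frac{C(1+|x|)^2}{N^2}\bigl(1+(k/N)^2\bigr)^{s-1}.
\end{equation*}
Summing over $j$ contributes a factor $m$, and $m/N'=1/N$, which leaves
\begin{equation*}
\E\bigl|D^s\varphi_{N',R}(x)-D^s\varphi_{N,R}(x)\bigr|^2\leq \frac{C(1+|x|)^2}{N^2}\cdot\frac{1}{N}\sum_{k=-NR}^{NR-1}\bigl(1+(k/N)^2\bigr)^{s-1}.
\end{equation*}

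The last sum is a Riemann sum for $\int_{-R}^{R}(1+\xi^2)^{s-1}d\xi$. This is where the hypothesis $s<1/2$ enters decisively: because $s-1<-1/2$, the function $(1+\xi^2)^{s-1}$ is integrable on all of $\R$, so the Riemann sums are bounded by a constant $C_s$ \emph{independent of $R$}. Dividing by $\sqrt{1+x^2}$ and taking the supremum in $x$ absorbs the factor $(1+|x|)$ and yields
\begin{equation*}
\bigl\|(1+x^2)^{-1/2}\bigl(D^s\varphi_{2^{n'},R}-D^s\varphi_{2^n,R}\bigr)\bigr\|_{L^\infty_\R,L^2_\Omega}\leq C_s\,2^{-n},
\end{equation*}
uniformly in $n'\geq n$ and in $R$. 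Given $\varepsilon>0$, choosing $n_0$ with $C_s\,2^{-n_0}\leq \varepsilon$ therefore works uniformly in $R$, so the sequence is Cauchy in the complete space $\sqrt{1+|x|^2}L^\infty_\R,L^2_\Omega$ and admits a limit $D^s\varphi_R$ obeying the stated bound after passing $n'\to\infty$.

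The main obstacle is keeping the derivative estimate compatible with the ambient weight: differentiating the symbol in $\eta$ produces the inevitable factor $(1+|x|)$ from the oscillation $e^{ix\eta}$, and one needs to be sure this is the only growth in $x$ and that it is absorbed by $\sqrt{1+x^2}$. The integrability at high frequency of $(1+\xi^2)^{s-1}$, which is precisely the assumption $s<1/2$, then guarantees that the Riemann sum remains uniformly bounded in $R$; without it, the rate would degrade in $R$ and the uniform Cauchy property would fail.
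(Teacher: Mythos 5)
Your argument is correct and follows essentially the same route as the paper's proof: the same nesting of Brownian increments $\delta_{N,k}=\sum_j\delta_{N',mk+j}$, the same mean value bound on the symbol $\eta\mapsto(1+\eta^2)^{(s-1)/2}e^{ix\eta}$ producing the weight $(1+|x|)$, and the same comparison of the resulting sum with $\int_\R(1+\xi^2)^{s-1}d\xi$, convergent precisely because $s<1/2$, to get the $R$-uniform rate $C_s2^{-n}$. No gaps to report.
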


We remark that for all $\omega \in \Omega$, $\varphi_{N,R} $ belongs to the dual of the Schwartz functions, hence $D^s$ is defined as the Fourier multiplier $(1+n^2)^{s/2}$. It is consistent with seeing $D^s$ as an operator on $L^2(\R/ (2\pi N \Z))$.

\begin{proof}We begin by writing $D^s\varphi_{N,R}$. We have : 
$$
D^s\varphi_{N,R}(x) = \sum_{k=-NR}^{NR-1} \delta_{N,k} \left(1+ \frac{k^2}{N^2}\right)^{(s-1)/2}N e^{ikx/N}dn\; .
$$
We recognize a It\^o integral. 

We prove that the sequence $(D^s\varphi_{2^n,R})_n$ is a Cauchy sequence in $\sqrt{1+x^2}L^\infty_{\R},L^2_\Omega$.  

Let $n\geq m$. Given that
$$
\delta_{2^m,l} = \sum_{j=0}^{2^{n-m}-1} \delta_{2^n,2^{n-m} l+j} \; ,
$$
we have
$$
D^s\varphi_{2^n,R}- D^s\varphi_{2^m,R} = \sum_{l=-2^mR}^{2^m R-1} \sum_{j=0}^{2^{n-m}-1} \delta_{2^n,2^{n-m}l+j} \left( \frac{e^{i(2^{n-m}l+j)x/2^n}}{(1+ (2^{n-m}l+j)^2/2^{2n})^{(1-s)/2}} - \frac{e^{ixl/2^m}}{(1+l^2/2^{2m})^{(1-s)/2}}\right)\; .
$$
Taking the $L^2_\Omega$ norm of $D^s\varphi_{2^n,R}- D^s\varphi_{2^m,R}$ to the square, we get
$$
\|D^s\varphi_{2^n,R}- D^s\varphi_{2^m,R}\|_{L^2_\Omega}^2 = \sum_{l=-2^mR}^{2^m R-1} \sum_{j=0}^{2^{n-m}-1} 2^{-n}\Big| \frac{e^{i(2^{n-m}l+j)x/2^n}}{(1+ (2^{n-m}l+j)^2/2^{2n})^{(1-s)/2}} - \frac{e^{ixl/2^m}}{(1+l^2/2^{2m})^{(1-s)/2}}\Big|^2\; .
$$
Since the derivative of $y \mapsto \frac{e^{ixy}}{(1+y^2)^{(1-s)/2}}$ is bounded by $\frac{\sqrt{1+x^2}(1+|s|)}{(1+y^2)^{(1-s)/2}}$, we have that
$$
\Big| \frac{e^{i(2^{n-m}l+j)x/2^n}}{(1+ (2^{n-m}l+j)^2/2^{2n})^{(1-s)/2}} - \frac{e^{ixl/2^m}}{(1+l^2/2^{2m})^{(1-s)/2}}\Big| \leq C_s \frac{\sqrt{1+x^2}}{(1+l^2/2^{2m})^{(1-s)/2}}\frac{j}{2^n}  \; .
$$
As $j$ is less than $2^{n-m}$ we have that $j/ 2^n$ is less than $2^{-m}$ and by summing over $j$, we get
$$
\sum_{j=0}^{2^{n-m}-1} 2^{-n}\Big| \frac{e^{i(2^{n-m}l+j)x/2^n}}{(1+ (2^{n-m}l+j)^2/2^{2n})^{(1-s)/2}} - \frac{e^{ixl/2^m}}{(1+l^2/2^{2m})^{(1-s)/2}}\Big|^2 \leq C_s 2^{-3m}  \frac{1+x^2}{(1+l^2/2^{2m})^{(1-s)}}
$$
Therefore,
$$
\|D^s \varphi_{2^n,R}- D^s\varphi_{2^m,R}\|_{L^2_\Omega}^2\lesssim \sum_{l= -2^m R}^{2^mR-1} 2^{-3m} (1+x^2) (1+ \frac{l^2}{2^{2m}})^{s-1} \lesssim (1+x^2)2^{-2m} \int_{-R}^R\frac{dy}{(1+y^2)^{1-s}}\; .
$$
We get
$$
\|\varphi_{2^n,R}- \varphi_{2^m,R}\|_{L^2_\Omega} \lesssim \sqrt{1+x^2} 2^{-m} \left(\int_{\R} \frac{dy}{(1+y^2)^{1-s}}\right)^{1/2}
$$
hence, since $s<1/2$, the integral converges and $D^s\varphi_{2^n,R}$ is a Cauchy sequence in $\sqrt{1+x^2}L^\infty_\R,L^2_\Omega$ with speed of convergence bounded uniformly in $R$. 

Therefore, the sequence $D^s\varphi_{2^n,R}$ converges uniformly in $R$ in $\sqrt{1+x^2}L^\infty_\R,L^2_\Omega$.\end{proof}

\begin{remark}As it will appear later, the restriction $\sqrt{1+x^2}\chi \in L^1$ will be needed to prove the weak convergence of the finite dimensional measures towards the Gibbs measure because we have to add this weight to have the convergence of $\varphi_{2^n,R}$. \end{remark}

\begin{lemma} The sequence $(D^s\varphi_R)_R$ converges in $\sqrt{1+x^2}L^\infty_\R, L^2_\Omega$. We call its limit $D^s\varphi$. \end{lemma}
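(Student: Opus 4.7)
The plan is to show that $(D^s\varphi_R)_R$ is Cauchy in $\sqrt{1+x^2}L^\infty_\R,L^2_\Omega$ as $R\to\infty$, which combined with completeness of that space gives a limit $D^s\varphi$. The previous lemma provides uniform (in $R$) Cauchy convergence of $D^s\varphi_{2^n,R}$ to $D^s\varphi_R$, so the natural route is to first estimate $\|D^s\varphi_{2^n,R'}-D^s\varphi_{2^n,R}\|_{L^2_\Omega}$ for $R'>R$ and $n$ large with a bound that is uniform in $n$ and tends to $0$ as $R\to\infty$, and then pass to the limit $n\to\infty$.

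Concretely, for $R'>R$ I would write, directly from Definition \ref{def-phi},
$$
D^s\varphi_{2^n,R'}(x)-D^s\varphi_{2^n,R}(x)=\sum_{k\,:\,2^nR\leq |k|<2^nR'}\delta_{2^n,k}\Bigl(1+\tfrac{k^2}{2^{2n}}\Bigr)^{(s-1)/2}e^{ikx/2^n}.
$$
Because the increments $\delta_{2^n,k}$ are mutually independent, centred, and have $L^2_\Omega$-norm squared equal to $1/2^n$, the cross terms vanish and one gets
$$
\|D^s\varphi_{2^n,R'}(x)-D^s\varphi_{2^n,R}(x)\|_{L^2_\Omega}^2=\sum_{k\,:\,2^nR\leq|k|<2^nR'}\frac{1}{2^n}\frac{1}{(1+k^2/2^{2n})^{1-s}},
$$
which is a Riemann sum for $\int_{R\leq|y|\leq R'}(1+y^2)^{s-1}\,dy$ and can be dominated, uniformly in $n$, by a constant multiple of $\int_{|y|\geq R}(1+y^2)^{s-1}\,dy$. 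The key point is that $s<1/2$ forces $2(1-s)>1$, so this tail integral is finite and vanishes as $R\to\infty$. In particular the bound is independent of $x$, so the convergence holds in $L^\infty_\R,L^2_\Omega$, which is stronger than the announced weighted space.

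Finally, to promote the estimate from the Riemann sums to $D^s\varphi_{R'}-D^s\varphi_R$, I would fix $\varepsilon>0$, choose $R_0$ so that $\int_{|y|\geq R_0}(1+y^2)^{s-1}dy\leq\varepsilon$, and then for any $R,R'\geq R_0$ apply the triangle inequality
$$
\|D^s\varphi_{R'}-D^s\varphi_R\|_{\sqrt{1+x^2}L^\infty_\R,L^2_\Omega}\leq \|D^s\varphi_{R'}-D^s\varphi_{2^n,R'}\|+\|D^s\varphi_{2^n,R'}-D^s\varphi_{2^n,R}\|+\|D^s\varphi_{2^n,R}-D^s\varphi_R\|,
$$
controlling the outer terms by the previous lemma (for $n$ large enough, uniformly in $R$) and the middle term by the Itô-type computation above. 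There is no serious analytic obstacle: the only delicate bookkeeping is ensuring that the bound in the intermediate step is truly uniform in $n$ (so that passage to the limit $n\to\infty$ is legitimate) and that the hypothesis $s<1/2$ is used precisely where the tail integral $\int(1+y^2)^{s-1}dy$ must converge.
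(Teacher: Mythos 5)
Your proposal is correct and follows essentially the same route as the paper: the same three-term triangle-inequality decomposition through $D^s\varphi_{2^n,R}$, the same $L^2_\Omega$ computation of the tail sum via independence of the increments, and the same comparison with the convergent integral $\int(1+y^2)^{s-1}dy$ using $s<1/2$. No gaps.
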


\begin{proof} We prove that $\varphi_R$ is a Cauchy sequence. Let $R\geq S$. We have
$$
\varphi_R -\varphi_S = \varphi_R - \varphi_{2^n,R} + \varphi_{2^n,R} - \varphi_{2^n,S} + \varphi_{2^n,S} - \varphi_S\; .
$$
As $D^s\varphi_{2^n,R}$ converges uniformly in $R$ towards $D^s\varphi_R$, it only remains to bound  $D^s\varphi_{2^n,R} - D^s\varphi_{2^n,S}$ independently from $n$. For all $N$, we have 
$$
D^s\varphi_{N,R} - D^s\varphi_{N,S} = \Big( \sum_{k=-NR}^{-NS-1} \delta_{N,k} \left(1+ \frac{k^2}{N^2}\right)^{(s-1)/2} e^{ikx/N} + \sum_{k=NS}^{NR-1} \delta_{N,k} \left(1+ \frac{k^2}{N^2}\right)^{(s-1)/2} e^{ikx/N}\Big)\; .
$$
By taking its $L^2_\Omega$ norm, we get
$$
\|D^s\varphi_{N,R} - D^s\varphi_{N,S}\|_{L^2_\Omega} = \Big( \sum_{k=-NR}^{-NS-1} \frac1N \left(1+ \frac{k^2}{N^2}\right)^{s-1} + \sum_{k=NS}^{NR-1} \frac1N \left(1+ \frac{k^2}{N^2}\right)^{s-1} \Big)^{1/2}\; .
$$
By taking its $L^\infty_\R$ norm, we get
\begin{eqnarray*}
\|D^s\varphi_{N,R} - D^s\varphi_{N,S}\|_{L^\infty_\R,L^2_\Omega} & \lesssim &   \Big( \sum_{k=-NR}^{-NS-1} \frac1N \left(1+ \frac{k^2}{N^2}\right)^{s-1}  + \sum_{k=NS}^{NR-1} \frac1N \left(1+ \frac{k^2}{N^2}\right)^{s-1} \Big)^{1/2}\\
 & \lesssim & \Big( \int_{-R}^{-S} \frac{dy}{(1+y^2)^{1-s}} + \int_{S}^R \frac{dy}{(1+y^2)^{1-s}} \Big)^{1/2}\; .
\end{eqnarray*}
Hence, by taking $N= 2^n$ large enough and $S$ large enough, one can bound $\varphi_R - \varphi_S$ by any $\varepsilon > 0$. The sequence $\varphi_R$ is a Cauchy sequence.
\end{proof}

\begin{proposition}\label{prop-defphi} There exist two sequences $(N_k)_k$ and $(R_k)_k$ that go to $\infty$ when $k\rightarrow \infty$ such that $D^s\phi_k =D^s \varphi_{N_k,R_k}$ converges towards $D^s\varphi$ when $k\rightarrow \infty$ in $\sqrt{1+x^2} L^\infty_\R, L^2_\Omega$. 
\end{proposition}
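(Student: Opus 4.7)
The plan is to obtain the proposition by a standard diagonal extraction combining the two lemmas that precede it. The first lemma gives convergence of $D^s\varphi_{2^n,R}$ to $D^s\varphi_R$ in $\sqrt{1+x^2}L^\infty_\R, L^2_\Omega$ \emph{uniformly in $R$}, and the second gives convergence of $D^s\varphi_R$ to $D^s\varphi$ in the same norm as $R\to\infty$. Chaining these two controls by the triangle inequality is what will produce the joint limit.

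Concretely, I would proceed as follows. Fix a sequence $\varepsilon_k \to 0$ (say $\varepsilon_k = 1/k$). First, using the second lemma, choose $R_k \to \infty$ such that
$$
\|D^s\varphi - D^s\varphi_{R_k}\|_{\sqrt{1+x^2}L^\infty_\R, L^2_\Omega} \leq \varepsilon_k/2.
$$
Next, invoking the uniformity in $R$ in the first lemma, pick $n_k$ (which we may take strictly increasing, so that $N_k := 2^{n_k} \to \infty$) large enough that
$$
\sup_{R} \|D^s\varphi_R - D^s\varphi_{2^{n_k},R}\|_{\sqrt{1+x^2}L^\infty_\R, L^2_\Omega} \leq \varepsilon_k/2.
$$
In particular this bound is valid with $R = R_k$. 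By the triangle inequality,
$$
\|D^s\varphi - D^s\varphi_{N_k,R_k}\|_{\sqrt{1+x^2}L^\infty_\R, L^2_\Omega} \leq \varepsilon_k,
$$
so $D^s\phi_k = D^s\varphi_{N_k,R_k}$ converges to $D^s\varphi$ in the required norm.

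There is really no serious obstacle here: both convergence statements have been proven already, and the only subtlety is that the uniformity in $R$ provided by the first lemma is \textbf{essential} — without it one could not control the error between $D^s\varphi_{N_k, R_k}$ and $D^s\varphi_{R_k}$ once $R_k$ has been selected (possibly very large) based on the second lemma. The monotonicity $n_k \nearrow \infty$ can be enforced at no cost by taking the maximum of $n_{k-1}+1$ and any admissible choice at step $k$, which guarantees $N_k \to \infty$ alongside $R_k \to \infty$.
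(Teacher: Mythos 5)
Your argument is correct and is essentially the paper's own proof: the paper simply takes $R_k = k$ and $N_k = 2^k$ and relies, exactly as you do, on the uniformity in $R$ of the first lemma combined with the second lemma via the triangle inequality. Your adaptive choice of $R_k$ then $n_k$ is a slightly more explicit version of the same diagonal argument.
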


\begin{proof} We take $R_k = k$ and $N_k = 2^k$. \end{proof}

\paragraph{Spaces which \texorpdfstring{$\varphi$}{phi} belongs to}

In this paragraph, we discuss the support of $\mu$. Indeed, if $\varphi(\omega)$ is $\mathbb P$-almost surely in  a space $X$ then $\mu(X) = \mathbb P(\varphi^{-1}(X)) = 1$, hence the support of $\mu$ is included in $X$. Conversely, if $\varphi(\omega) \notin Y$ $\mathbb P$-almost surely, $\mu(Y) = 0$, the support of $\mu$ is included in $Y^c$, its complementary.

\begin{proposition}\label{prop-belong} For all $1\leq p < \infty$ and $s<1/2$ and $t\in \R$, $L(t)\varphi$ belongs to $ L^p_\Omega,W^{s,p}_{\textrm{loc}}$ and for all $\xi \in L^p_\R$, $\xi L(t)\varphi$ belongs to $L^p_\Omega, L^p_\R$. We also have that for all $1\leq q < \infty$, $\xi L(\tau) \varphi$ belongs to $L^q_{\textrm{loc},\tau }(\R, L^p_x)$ which we write as $L^q_{\textrm{loc},\tau }, L^p_x$ and $L(\tau)\varphi$ belongs to $L^q_{\textrm{loc},\tau},L^p_{\textrm{loc},x}$.  \end{proposition}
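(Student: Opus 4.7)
My plan is to observe that pointwise in $(t,x)$, $L(t)\varphi(x)$ is a Gaussian random variable whose variance is uniformly bounded, and then derive every item of the proposition via Fubini combined with the equivalence of Gaussian moments. First, each $L(t)\varphi_{N,R}(x)$ is a linear combination of the independent complex Gaussian increments $\delta_{N,k}$, hence is Gaussian; this persists in the $L^2_\Omega$-limit along the sequence $(N_k,R_k)$ of Proposition \ref{prop-defphi}, so $D^s L(t)\varphi(\cdot,x)$ is itself Gaussian. Because the Fourier multiplier $e^{-it\sqrt{1+n^2}}$ has modulus one, a direct computation analogous to those of the previous two lemmas yields
\[
\E|D^s L(t)\varphi(\cdot,x)|^2 = \lim_{N,R\to\infty}\sum_{k=-NR}^{NR-1}\frac{1}{N}\Bigl(1+\tfrac{k^2}{N^2}\Bigr)^{s-1} = \int_{\R}\frac{dy}{(1+y^2)^{1-s}},
\]
which is finite for $s<1/2$ and independent of $(t,x)$. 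Gaussian hypercontractivity (equivalence of all $L^p_\Omega$ moments on a fixed Gaussian chaos) then upgrades this to
\[
\sup_{t,x\in\R}\|D^s L(t)\varphi(\cdot,x)\|_{L^p_\Omega}\le C_{p,s}<\infty\quad\text{for every }1\le p<\infty.
\]

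Each assertion of the proposition follows by Fubini. Identifying $W^{s,p}_{\textrm{loc}}$ with the local Bessel potential space $(1-\lap)^{-s/2}L^p_{\textrm{loc}}$, I would write for any compact $K\subset\R$
\[
\E\|L(t)\varphi\|_{W^{s,p}(K)}^p \lesssim \int_K\|D^s L(t)\varphi(\cdot,x)\|_{L^p_\Omega}^p\,dx\le C_{p,s}^p|K|,
\]
giving the $L^p_\Omega, W^{s,p}_{\textrm{loc}}$ membership. Taking $s=0$ and weighting by $\xi\in L^p_\R$ gives $\E\|\xi L(t)\varphi\|_{L^p_\R}^p\le C_p^p\|\xi\|_{L^p}^p$. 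For the time-integrated statements I would fix a bounded interval $I\subset\R$ and a compact $K$, pick $r\ge\max(p,q)$, and note that by Fubini
\[
\E\int_I\int_K|L(\tau)\varphi(x)|^r\,dx\,d\tau\le C_r^r|I||K|,
\]
so $L(\tau)\varphi\in L^r(I\times K)$ almost surely, and then lies in $L^q(I;L^p(K))$ by the finite-measure embedding $L^r(I\times K)\hookrightarrow L^q(I;L^p(K))$. The $\xi$-weighted variant over all of $\R$ in $x$ is identical, with $1_K$ replaced by $|\xi|\in L^p_\R$.

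I do not anticipate any serious obstacle: everything reduces to the uniform pointwise variance bound, and the phase $e^{-it\sqrt{1+n^2}}$ leaves that computation unchanged because only the moduli of the Fourier coefficients appear. The only mild technicality is the identification of $W^{s,p}$ with the Bessel potential space for $1<p<\infty$, which is classical; for the $p=1$ endpoint one can equivalently work with the Gagliardo seminorm, applying the same argument to the Gaussian increment $L(t)\varphi(x)-L(t)\varphi(y)$, whose variance is easily seen to be $\lesssim|x-y|^{2\alpha}$ for any $\alpha<1/2$, so that the double integral defining the seminorm is finite in expectation on any compact.
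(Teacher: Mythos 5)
Your proposal is correct and follows essentially the same route as the paper: reduce everything to the uniform pointwise bound $\sup_{t,x}\|D^sL(t)\varphi(\cdot,x)\|_{L^2_\Omega}^2=\int_\R (1+y^2)^{s-1}dy<\infty$ (unaffected by the unimodular multiplier $e^{-it\sqrt{1+n^2}}$), upgrade to $L^p_\Omega$ by equivalence of Gaussian moments, and conclude by Fubini/Minkowski over $x$, $\tau$ and the weight $\xi$, with $r=\max(p,q)$ for the mixed time-space norms. The paper phrases the last step as a Minkowski exchange of $L^r_\Omega$ with $L^q_\tau(K,L^p_x)$ rather than your finite-measure nested embedding, but the two are interchangeable here.
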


\begin{proof}  Let $K$ be a compact of $\R$. The $ L^p_\Omega,W^{s,p}(K)$ norm of $L(t)\varphi$ is equal to the $ L^p_{\textrm{loc},\R}, L^p_\Omega$ norm of $D^s L(t) \varphi$. Let $N, R \in \N$. Since, $L(t) \varphi_{N,R}(x)$ is a Gaussian variable, it $L^p_\Omega$ norm is controlled by its $L^2_\Omega$ norm. Besides, because of the structure of $L(t)$, the $L^2_\Omega$ norm of $L(t)D^s \varphi(x)$ is the same as the $L^2_\Omega$ norm of $D^s \varphi(x)$. Indeed, $\widehat{L(t) f}(n) = e^{-it\sqrt{1+n^2}}\hat f (n)$ and  $|e^{-it\sqrt{1+n^2}}| = 1$. We have
$$
\|D^s L(t) \varphi_{N,R}(x)\|_{L^p_\Omega} \leq C_p  \Big( \sum_{k=-NR}^{NR-1} \frac1N  \left(1+ \frac{k^2}{N^2}\right)^{s-1} \Big)^{1/2}
$$
which yields, for all $x\in \R$,
$$
\|D^s L(t)\varphi_{N,R}(x)\|_{L^p_\Omega} \leq C_p \Big(\int_{\R} \frac{dy}{(1+y^2)^{1-s}} \Big)^{1/2}
$$
which is finite if $s<1/2$. Hence, by taking its $L^p(K)$ norm, we get
$$
\|D^sL(t)\varphi_{N,R}\|_{L^p_K,L^p_\Omega} \leq C_{p,s} \textrm{vol}(K)^{1/p}
$$
where $C_{p,s}$ is a constant depending on $p$ (from the Gaussian) and $s$ (from the integral over $y$) but not on $R$ or $N$. Therefore,
$$
\|D^s L(t) \varphi\|_{L^p_K,L^p_\Omega} \leq C_{p,s} \textrm{vol}(K)^{1/p}\; .
$$

For the second part of the proposition, we use that $\xi$ does not depend on the probability space to write
$$
\|\xi(x) L(t) \varphi_{N,R}(x)\|_{L^p_\Omega} \leq |\xi(x)|\; \|L(t)\varphi_{N,R}(x)\|_{L^p_\Omega}\; ,
$$
and then
$$
\|\xi L(t) \varphi_{N,R}\|_{L^p_\Omega, L^p_\R} \leq  \|\xi\|_{L^p} \|\varphi_{N,R}\|_{L^\infty_\R,L^p_\Omega}\leq C_p \|\xi\|_{L^p}\; .
$$

Finally, let $K$ be a compact of $\R$, and $r$ be the maximum of $p$ and $q$. Thanks to Minkowski inequality, the $L^r_\Omega, L^q_{\tau}(K, L^p_x)$ norm is less than the $L^q_\tau(K, L^p_x), L^r_\Omega$ norm. Since the $L^r_\Omega$ norm of $\xi(x) L(t) \varphi(x)$ does not  depend on $t$, we get
$$
\|\xi L(t) \varphi_{N,R}\|_{L^r_\Omega,L^q_\tau(K, L^p_x)} \leq C_r \textrm{vol}(K)^{1/q}\|\xi\|_{L^p}\; .
$$
We also have with $K'$ another compact of $\R$
$$
\| D^s L(\tau) \varphi_{N,R}\|_{L^r_\Omega,L^q_\tau(K,L^p_x(K'))} \leq C_{r,s} \textrm{vol}(K)^{1/q}\textrm{vol}(K')^{1/p}
$$
which concludes the proof.
\end{proof}

\begin{remark}\label{rem-linfini} The immediate consequence of the previous proposition is that $L(t)\varphi$ belongs almost surely in $\omega \in \Omega$ to $W^{s,p}_{\textrm{loc}} $ and in particular, almost surely to $H^{1/2-}_{\textrm{loc}} = \bigcap_{s<1/2}H^s_{\textrm{loc}}$. Because of Sobolev embeddings, by taking $p > 1/s$, we get that $L(t)\varphi$ belongs almost surely to $L^\infty_{\textrm{loc},x}$ and $L(\tau)\varphi$ belongs to $L^q_{\textrm{loc},\tau},L^\infty_{\textrm{loc},x}$. \end{remark}

\begin{proposition}For $\mathbb P$-almost every $\omega \in \Omega$, $\varphi (\omega)$ does not belong to $L^2_x$. \end{proposition}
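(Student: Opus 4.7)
My strategy is to exploit the stationarity of the Gaussian field $x \mapsto \varphi(x)$ together with the exponential decay of its covariance, and conclude via a second-moment argument applied to the integrals of $|\varphi|^2$ over unit intervals.

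Taking the $L^2_\Omega$ limit of the Riemann sum for $\mathbb{E}[\varphi_{N,R}(x)\overline{\varphi_{N,R}(y)}]$, pointwise in $x,y$ (which is permitted by applying the preceding lemmas with $s=0$, the relevant integral $\int \frac{dn}{1+n^2}$ being finite), and using that the increments satisfy $\mathbb{E}[\overline{\delta_{N,j}}\delta_{N,k}]=\delta_j^k/N$ and $\mathbb{E}[\delta_{N,j}\delta_{N,k}]=0$ (Remark \ref{rem-Glaw}, the latter because $W$ is truly complex), I obtain
$$
\mathbb{E}[\varphi(x)\overline{\varphi(y)}] = \int_{\R} \frac{e^{in(x-y)}}{1+n^2}\,dn = \pi e^{-|x-y|},\qquad \mathbb{E}[\varphi(x)\varphi(y)]=0.
$$
In particular $\mathbb{E}|\varphi(x)|^2=\pi>0$ for every $x$, and since $\varphi$ is then a circular complex Gaussian field, Wick's formula yields
$$
\mathbb{E}[|\varphi(x)|^2|\varphi(y)|^2]=\pi^2+|\mathbb{E}[\varphi(x)\overline{\varphi(y)}]|^2=\pi^2\bigl(1+e^{-2|x-y|}\bigr).
$$

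I then decompose $\int_{\R}|\varphi(x)|^2\,dx=\sum_{k\in\Z}X_k$ with $X_k=\int_k^{k+1}|\varphi(x)|^2\,dx$. Fubini gives $\mathbb{E} X_k=\pi$, and the Wick identity above gives
$$
\mathrm{Cov}(X_j,X_k)=\pi^2\int_j^{j+1}\!\!\int_k^{k+1}e^{-2|x-y|}\,dx\,dy\leq Ce^{-2|j-k|}.
$$
Setting $S_K=\sum_{|k|\leq K}X_k$, this yields $\mathbb{E} S_K=(2K+1)\pi$ and $\mathrm{Var}(S_K)\leq C'K$. Chebyshev then gives $\mathbb{P}(S_K<\pi K)=O(1/K)$; along the sub-sequence $K_n=n^2$ this is summable, so Borel--Cantelli gives $S_{K_n}\geq\pi K_n$ for all but finitely many $n$ almost surely. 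As $K\mapsto S_K$ is non-decreasing, $S_K\to +\infty$ almost surely, i.e.\ $\int_{\R}|\varphi(x)|^2\,dx=+\infty$ almost surely.

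The main technical point is the passage to the limit in the fourth moment of $\varphi_{N,R}$ needed to get Wick's identity for $\varphi$ itself. This follows from the pointwise $L^2_\Omega$ convergence $\varphi_{N,R}(x)\to\varphi(x)$ combined with the Gaussian hypercontractivity inequality $\|\varphi_{N,R}(x)-\varphi(x)\|_{L^p_\Omega}\lesssim_p\|\varphi_{N,R}(x)-\varphi(x)\|_{L^2_\Omega}$, which promotes the $L^2_\Omega$ convergence to every $L^p_\Omega$ and provides the uniform integrability required.
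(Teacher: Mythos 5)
Your proof is correct, but it takes a genuinely different route from the paper's. The paper also starts from the first-moment computation $\E\,|\varphi(x)|^2=\pi$ (obtained there as a Riemann-sum lower bound $\E\int_{-R}^{R}|\phi_k|^2\geq \pi R$ for $k$ large), concludes that $\|\varphi\|_{L^2_\Omega,L^2_x}=\infty$, and then invokes Fernique's theorem — the zero–one dichotomy for norms of Gaussian variables — to upgrade the infinite second moment to $\mathbb{P}(\|\varphi\|_{L^2}=\infty)=1$ in one stroke. You instead avoid Fernique entirely: you compute the covariance kernel $\pi e^{-|x-y|}$ exactly, use circularity and Wick's formula to control fourth moments, and run a second-moment/Chebyshev/Borel--Cantelli argument on the unit-interval masses $X_k$, exploiting the exponential decorrelation to get $\mathrm{Var}(S_K)=O(K)$ while $\E S_K\sim 2\pi K$. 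What each approach buys: the paper's is much shorter but rests on a nontrivial abstract theorem (and on $\|\cdot\|_{L^2}$ being an admissible measurable norm for that theorem, a point the paper passes over quickly); yours is longer and requires justifying the passage to the limit in the fourth moments (which your hypercontractivity remark handles correctly, since an $L^2_\Omega$ limit of jointly circular Gaussians is again circular Gaussian), but it is elementary, self-contained, and yields the quantitative almost-sure growth $\int_{-K}^{K}|\varphi|^2\gtrsim K$ rather than mere divergence. Both arguments are valid.
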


\begin{proof} Let $R>0$ and $k\in \N$. We have that 
$$
\int_\Omega \int_{-R}^R |\phi_k(\omega,x)|^2 dx d\mathbb P(\omega) = \int_{-R}^R \int_\Omega |\phi_k(\omega,x)|^2 d\mathbb P (\omega ) dx
$$
and by using the definition of $\phi_k$ :
$$
\int_\Omega \int_{-R}^R |\phi_k(\omega,x)|^2 dx d\mathbb P(\omega) = \int_{-R}^R \sum_{j = -N_kR_k}^{N_kR_k -1} \frac1{N_k} \frac1{1+\frac{j^2}{N_k^2}}dx
$$
We divide the sum in to 3 with $j<0$, $j=0$ and $j>0$ and use symmetries to get
$$
\int_\Omega \int_{-R}^R |\phi_k(\omega,x)|^2 dx d\mathbb P(\omega) \geq 4R \sum_{j=1}^{N_kR_k -1 } \frac{1}{N_k} \frac1{1+\frac{j^2}{N_k^2}} \; .
$$
By comparing the sum with an integral and with a change of variable we get
$$
\int_\Omega \int_{-R}^R |\phi_k(\omega,x)|^2 dx d\mathbb P(\omega) \geq 4R \int_{0}^{R_k - 1/N_k} \frac{dy}{1+y^2} \; .
$$
There exists $k$ such that for all $k \geq k_0$, we have 
$$
\int_\Omega \int_{-R}^R |\phi_k(\omega,x)|^2 dx d\mathbb P(\omega) \geq R\pi \; .
$$
Hence, we get that for all $R$,
$$
\| \left( \int_{-R}^R \varphi(\omega, x) dx\right)^{1/2} \|_{L^2_\Omega} \geq \sqrt{\pi R}
$$ 
and therefore,
$$
\|\varphi\|_{L^2_\Omega,L^2_x} \geq \sqrt{\pi R}\; .
$$
Finally, we have 
$$
\|\varphi\|_{L^2_\Omega, L^2_x} = \infty
$$
and we conclude using Fernique's theorem \cite{Fint}, that states that if $N$ is a norm and $X$ a Gaussian variable (defined in a large sense that includes $\varphi$) then if $\E(N(X)^p)=0$ for any $p >0$, then we have $\mathbb P(N(X) = \infty) = 1$.\end{proof}

\subsection{Finite dimensional approximation and invariance}

\paragraph{Invariance of the law of $\phi_k$ under the linear flow}

\begin{definition}Let $\mu$ be the measure on $H^{1/2-}_{\textrm{loc}}$ induced by $\varphi$ and $\mu_k$ be the measure induced by $\phi_k$. Namely, for all set $A$ in the topological $\sigma$-algebra of $\Hloc$, we have 
\begin{eqnarray*}
\mu(A) = \mathbb P(\varphi^{-1}(A)) \\
\mu_k(A) = \mathbb P(\phi_k^{-1}(A))
\end{eqnarray*}
\end{definition}

\begin{definition}\label{dist-Hloc} For all $R \in \N$ and $s \in \{ \frac12 -\frac1l \; |\; l\in \N^*\}$, we call $p_{R,s}$ the semi-norm :
$$
p_{R,s} : f\mapsto \sqrt{\int_{[-R,R]}|D^s f|^2}\; .
$$
We call $d$ the distance on $\Hloc$ which is equivalent to the topology induced by the semi-norms defined for all $u,v \in \Hloc$ by
$$
d(u,v) = \sum_{(k,l)\in (\N^*)^2} 2^{-(k+l)} \frac{p_{k,1/2-1/l}(u-v)}{1+p_{k,1/2-1/l}(u-v)}\; .
$$
\end{definition}

\begin{proposition}\label{prop-invmuk} The measure $\mu_k$ is invariant under the flow $L(t)$. \end{proposition}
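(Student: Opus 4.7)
The plan is to compute $L(t)\phi_k$ explicitly as a linear combination of the same Gaussian increments $\delta_{N_k,j}$ that define $\phi_k$, then observe that each increment is only multiplied by a deterministic unit modulus factor, and finally invoke the rotational invariance of the complex Gaussian law to conclude that $L(t)\phi_k$ and $\phi_k$ have the same distribution.

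First I would write
$$
L(t)\phi_k(\omega,x) = \sum_{j=-N_kR_k}^{N_kR_k-1} e^{-it\sqrt{1+j^2/N_k^2}}\,\delta_{N_k,j}(\omega)\,\frac{1}{\sqrt{1+j^2/N_k^2}}\,e^{ijx/N_k},
$$
which is justified because $\phi_k$ is, at $\omega$ fixed, a trigonometric sum whose Fourier transform is the finite sum of Dirac masses at the points $j/N_k$, so the Fourier multiplier $e^{-it\sqrt{1+\xi^2}}$ simply multiplies each atom by the unit complex number $e^{-it\sqrt{1+j^2/N_k^2}}$. I would also note here that, by the continuity of $L(t)$ on $\Hloc$ (used throughout the introduction), $L(t)$ is a Borel map, so $\mu_k\circ L(t)^{-1}$ makes sense as a Borel measure on $\Hloc$.

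Next I would observe that, by construction, $\phi_k$ is a fixed measurable function $F$ of the finite random vector $\Delta = (\delta_{N_k,j})_{j=-N_kR_k}^{N_kR_k-1}$, and likewise $L(t)\phi_k = F(\Delta^{(t)})$ with $\Delta^{(t)} = (e^{-it\sqrt{1+j^2/N_k^2}}\,\delta_{N_k,j})_{j}$. Hence it suffices to prove that $\Delta$ and $\Delta^{(t)}$ have the same joint distribution on $\C^{2N_kR_k}$. Since $W$ is a complex Brownian motion made of two independent real Brownian motions with the same variance, each $\delta_{N_k,j}$ is a rotationally invariant complex Gaussian (its real and imaginary parts are independent centered Gaussians with a common variance), and the $\delta_{N_k,j}$ are independent across $j$ because they are increments of $W$ over disjoint intervals. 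Therefore the law of $\Delta$ is the product of rotationally invariant complex Gaussians, and multiplying each coordinate by a deterministic unit modulus $e^{-it\sqrt{1+j^2/N_k^2}}$ preserves that law coordinatewise, hence jointly.

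Combining the two steps, for every Borel set $A\subset \Hloc$,
$$
\mu_k\bigl(L(t)^{-1}(A)\bigr) = \mathbb P\bigl(L(t)\phi_k\in A\bigr) = \mathbb P\bigl(F(\Delta^{(t)})\in A\bigr) = \mathbb P\bigl(F(\Delta)\in A\bigr) = \mu_k(A),
$$
which is exactly the invariance of $\mu_k$ under $L(t)$. The only delicate point is the very first step, namely making sense of the Fourier multiplier $L(t)$ on $\phi_k$: this is harmless because $\phi_k(\omega,\cdot)$ lies in the space of tempered distributions with discrete spectrum $\{j/N_k\}$, so $L(t)$ acts pointwise in frequency. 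Everything else is the standard fact that independent rotationally invariant complex Gaussians are invariant under arbitrary deterministic phase shifts, which makes the argument much cleaner than in the nonlinear case to come.
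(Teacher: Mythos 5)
Your proof is correct and follows essentially the same route as the paper: compute $L(t)\phi_k$ mode by mode, note that each independent complex Gaussian increment $\delta_{N_k,j}$ is only multiplied by a deterministic unit-modulus phase, and invoke the rotational invariance of independent complex Gaussians to conclude that $L(t)\phi_k$ and $\phi_k$ have the same law. Your version merely spells out more carefully the reduction to the joint law of the finite vector $\Delta$ and the measurability of $L(t)$, which the paper leaves implicit.
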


\begin{proof} We apply $L(t)$ to $\phi_k$. We get
$$
L(t) \phi_k(x) = \sum_{j= -N_kR_k}^{N_kR_k-1} e^{i(1+ \frac{j^2}{N_k^2})^{1/2}t}\delta_{N_k,j} \Big( 1+ \frac{j^2}{N_k^2} \Big)^{-1/2} e^{ijx/N_k}\; . 
$$
We recall that $\phi_k$ is given by
$$
\phi_k(x) = \sum_{j= -N_kR_k}^{N_kR_k-1}\delta_{N_k,j} \Big( 1+ \frac{j^2}{N_k^2} \Big)^{-1/2} e^{ijx/N_k}\; . 
$$
The $\delta_{N_k,l}$ are complex Gaussian variables. Hence, their laws are invariant under multiplication by $e^{i\gamma}$ for all $\gamma \in \R$. Besides, they are all independent from each other, hence $L(t) \phi_k$ has the same law as $\phi_k$ (even though they are different random variables), which is equivalent to say that $\mu_k$ is invariant under the flow $L(t)$.
\end{proof}

\paragraph{Approximation of \texorpdfstring{$\mu$}{mu}}

In this paragraph, we give a property on $\mu$ which allows us to approach it by the sequence $(\mu_k)_k$.

\begin{proposition}\label{prop-openmu} For all open set $U$ of $\Hloc$, we have
$$
\mu(U) \leq \liminf_{k\rightarrow \infty} \mu_k(U) \; .
$$
For all closed sets $F$, we have 
$$
\mu(F) \geq \limsup_{k \rightarrow \infty} \mu_k ( F) \; .
$$
\end{proposition}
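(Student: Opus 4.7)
The plan is to reduce the two inequalities to the standard portmanteau theorem by first proving that $\phi_k\to\varphi$ in probability in the metric space $(\Hloc,d)$. Once this convergence in probability is in hand, open and closed sets are handled by an elementary inclusion argument.

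The first step is to transfer the weighted $L^\infty_\R,L^2_\Omega$ convergence of Proposition \ref{prop-defphi} into convergence in each local seminorm $p_{R,1/2-1/l}$. Fixing $R\in\N^*$ and $s=1/2-1/l$, Fubini yields
\begin{align*}
\E\bigl[p_{R,s}(\varphi-\phi_k)^2\bigr] &= \int_{-R}^R \|D^s(\varphi-\phi_k)(\cdot,x)\|_{L^2_\Omega}^2\,dx \\
&\leq \Bigl(\int_{-R}^R (1+x^2)\,dx\Bigr)\bigl\|(1+\cdot^2)^{-1/2}(D^s\varphi-D^s\phi_k)\bigr\|_{L^\infty_\R,L^2_\Omega}^2,
\end{align*}
which tends to $0$ by Proposition \ref{prop-defphi}. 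Hence $p_{R,s}(\varphi-\phi_k)\to 0$ in $L^2_\Omega$, and in particular in probability. Convergence in the distance $d$ then follows by splitting its defining double series: the tail $\sum_{k+l>N} 2^{-(k+l)}$ can be made smaller than any prescribed $\varepsilon/2$ independently of $k$, while the remaining finite sum involves only finitely many $p_{k,1/2-1/l}(\varphi-\phi_k)$, each of which tends to $0$ in probability. Combining these two observations gives $\phi_k\to\varphi$ in probability in $(\Hloc,d)$.

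For the portmanteau step, fix an open set $U\subset \Hloc$ and set $U_n=\{u\in\Hloc : d(u,U^c)\geq 1/n\}$. Each $U_n$ is closed, $(U_n)_n$ is increasing, and $\bigcup_n U_n = U$. The elementary inclusion
$$
\{\varphi\in U_n\}\cap\{d(\varphi,\phi_k)<1/n\}\subset\{\phi_k\in U\}
$$
gives $\mu_k(U)\geq \mu(U_n) - \mathbb P\bigl(d(\varphi,\phi_k)\geq 1/n\bigr)$. Taking $\liminf_k$ eliminates the probability term by Step 2, and letting $n\to\infty$ together with monotone continuity of $\mu$ yields $\liminf_k \mu_k(U)\geq \mu(U)$. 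The closed-set inequality is obtained by passing to complements, using $\mu_k(\Hloc)=\mu(\Hloc)=1$.

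The only real obstacle is Step 1: ensuring that the global weighted convergence of $D^s\phi_k$ descends to each local seminorm. This is routine because $(1+x^2)^{1/2}$ is bounded on every compact interval, so no deeper analysis is needed to pass from the topology in which $\varphi$ was constructed to the topology of $\Hloc$ in which the measures $\mu_k$ converge weakly to $\mu$.
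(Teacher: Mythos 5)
Your proof is correct, and its analytic core coincides with the paper's: both arguments hinge on the same estimate $\|p_{R,s}(\varphi-\phi_k)\|_{L^2_\Omega}\lesssim R^{3/2}\|(1+x^2)^{-1/2}D^s(\varphi-\phi_k)\|_{L^\infty_\R,L^2_\Omega}\to 0$ (your Fubini computation is exactly this bound), and both exploit the fact that $\mu_k$ and $\mu$ are coupled through $\phi_k$ and $\varphi$ on the same probability space. Where you diverge is in the final, measure-theoretic step. The paper shows $\E(d(\varphi,\phi_k))\to 0$ by summing the seminorm estimates against the weights $2^{-(R+l)}$, deduces $\E(F\circ\phi_k)\to\E(F\circ\varphi)$ for bounded Lipschitz $F$, and then simply asserts that this is equivalent to the two stated inequalities — i.e., it invokes the portmanteau theorem as a black box. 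You instead downgrade to convergence in probability in $(\Hloc,d)$ and prove the open-set inequality directly via the exhaustion $U_n=\{d(\cdot,U^c)\geq 1/n\}$ and the inclusion $\{\varphi\in U_n\}\cap\{d(\varphi,\phi_k)<1/n\}\subset\{\phi_k\in U\}$, passing to complements for closed sets. Your route is slightly weaker in what it extracts from Step 1 (convergence in probability rather than in $L^1$ of the distance, which is all that is needed) but more self-contained, since it reproves the one implication of the portmanteau theorem actually used rather than citing the full equivalence; the paper's route is shorter on the page but leans on an unproved standard fact. Both are valid; no gap.
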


\begin{proof} This property is equivalent to the fact that $\mu_k$ converges weakly towards $\mu$, which is also equivalent to the fact that for all $F : \Hloc \rightarrow \R$ bounded and Lipschitz-continuous on $\Hloc$, the mean value of $F$ with regard to $\mu_k$ converges towards the mean value of $F$ with regard to $\mu$. In other words, we have to prove that
$$
\E (F \circ \phi_k) \rightarrow \E (F \circ \varphi)
$$
where $\E$ is the mean value with regard to $\mathbb P$.

We have that 
$$
\Big| \E (F \circ \varphi) - \E (F \circ \phi_k)\Big| \leq \E (| F\circ \varphi - F\circ \phi_k |) \leq |F|_{lip} \E (d(\varphi, \phi_k))\; ,
$$
where 
$$
|F|_{lip} = \sup_{x,y \in \Hloc} \frac{|F(x) - F(y)|}{d(x,y)} \; .
$$
Thanks to the definition of $d$, we have 
$$
\E (d(\varphi, \phi_k)) = \sum_{R,l} 2^{-(R+l)} \E \Big(\frac{p_{R,1/2-1/l}(\varphi-\phi_k)}{1+p_{R,1/2-1/l}(\varphi-\phi_k)}\Big)\; .
$$
Since 
$$
\E \Big((\frac{p_{R,1/2-1/l}(\varphi-\phi_k)}{1+p_{R,1/2-1/l}(\varphi-\phi_k)}\Big)
$$
is less than $1$ and than $\E (p_{R,1/2-1/l}(\varphi-\phi_k))$, it is sufficient to prove that the sequences
$$
 \E (p_{R,1/2-1/l}(\varphi-\phi_k))
$$
converge to $0$ when $k\rightarrow \infty$.

For all random variable $f$, all $R\geq 1$ and all $l\geq 1$, we have
$$
\E (p_{R,1/2-1/l}(f) )\leq \| p_{R,1/2-1/l}(f)\|_{L^2_\Omega}
$$
and with $s=1/2-1/l$,
$$
\|p_{R, s} (f)\|_{L^2_\Omega} \leq C R^{3/2}\|(1+x^2)^{-1/2} D^s f \|_{L^\infty_\R,L^2_\Omega}\; .
$$
Hence, $\|p_{R,s}(\varphi - \phi_k)\|_{L^2_\Omega}$ converges toward $0$ when $k \rightarrow \infty$ for all $R$ and $s< 1/2$. Therefore $\mu_k$ converges weakly towards $\mu$ on the topological $\sigma$-algebra of $\Hloc$. 
\end{proof}

\subsection{Invariance of \texorpdfstring{$\mu$}{mu} under the linear flow}

\begin{theorem}\label{th-lininv} The measure $\mu$ is invariant under the flow $L(t)$. That is, for all measurable set $A$ of $\Hloc$,
$$
\mu(L(t)^{-1} A) = \mu(A) \; .
$$
\end{theorem}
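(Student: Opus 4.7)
The plan is to transfer the invariance of the finite-dimensional approximations $\mu_k$ under $L(t)$ (Proposition \ref{prop-invmuk}) to the weak limit $\mu$ (Proposition \ref{prop-openmu}), using the continuity of $L(t)$ on $(\Hloc, d)$ emphasized in the introduction.

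Since $(\Hloc, d)$ is a Polish space, the Borel probability measures $\mu$ and $L(t)_*\mu$ coincide as soon as they assign the same integral to every bounded continuous $F : \Hloc \to \R$. It therefore suffices to show
\[
\int F \circ L(t) \, d\mu = \int F \, d\mu,
\]
which is equivalent to the claimed identity $L(t)_* \mu = \mu$. Continuity of $L(t)$ makes $F \circ L(t)$ bounded continuous; Proposition \ref{prop-openmu} is the weak convergence $\mu_k \rightharpoonup \mu$ (its stated Portmanteau form for open and closed sets being equivalent, in a metric space, to convergence of integrals against bounded continuous test functions); and Proposition \ref{prop-invmuk} reads $L(t)_* \mu_k = \mu_k$. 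Chaining the three ingredients,
\[
\int F \circ L(t) \, d\mu = \lim_k \int F \circ L(t) \, d\mu_k = \lim_k \int F \, d(L(t)_* \mu_k) = \lim_k \int F \, d\mu_k = \int F \, d\mu,
\]
which proves the theorem.

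The only non-probabilistic ingredient, and hence the main (rather mild) obstacle, is the continuity of $L(t)$ on $(\Hloc, d)$. This follows from the definition of $d$ as a weighted sum of the seminorms $p_{R, 1/2-1/l}$ together with finite propagation speed for the Klein-Gordon equation: writing $u = f + ig$ with $g = (1-\lap)^{-1/2} \partial_t f$ at $t = 0$, the real and imaginary parts of $L(t) u$ satisfy linear Klein-Gordon, so each seminorm $p_{R, 1/2-1/l}(L(t) u)$ is controlled by a seminorm of $u$ on the enlarged interval $[-R - |t|, R + |t|]$. Dominated convergence in the defining series for $d$ then yields continuity; this is precisely why the topology of $\Hloc$ was selected over alternatives. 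The remainder is bookkeeping, and this proof is the template for the harder nonlinear invariance of $\rho$ in Section 5, where the analytic control of the flow replaces the trivial unitarity used here.
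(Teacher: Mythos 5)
Your proof is correct, and it takes a genuinely different (and shorter) route than the paper. The paper argues at the level of sets: for a closed $K$ it introduces the open $\varepsilon$-neighbourhood $K_\varepsilon$, uses continuity of $L(t)$ to make $L(t)^{-1}(K_\varepsilon)$ open, applies the two Portmanteau inequalities of Proposition \ref{prop-openmu} together with Proposition \ref{prop-invmuk} to get $\mu^t(K)\leq \mu(\overline K_\varepsilon)$, lets $\varepsilon\to 0$, and then invokes the \emph{reversibility} of $L(t)$ to upgrade the one-sided inequality $\mu^t(K)\leq\mu(K)$ to an equality. You instead test against bounded continuous $F$ and chain weak convergence, the change of variables for pushforwards, and $L(t)_*\mu_k=\mu_k$; this yields the equality directly, with no $\varepsilon$-neighbourhoods and no appeal to reversibility. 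All the ingredients you use are available: the Portmanteau form of Proposition \ref{prop-openmu} is equivalent on a metric space to convergence of integrals of bounded continuous test functions, $F\circ L(t)$ is bounded continuous (indeed Lipschitz, since $p_{R,s}(L(t)u)\leq p_{R+|t|,s}(u)$ gives $d(L(t)u,L(t)v)\leq 2^{\lceil |t|\rceil}d(u,v)$), and agreement of two Borel probability measures on all such integrals forces equality (separability is not even needed for that last step). The one inaccuracy is your closing remark: your functional chaining is \emph{not} the template for the nonlinear invariance in Section 5. There $\psi(t)$ is only defined on a full-measure set, $\psi_k(t)u\to\psi(t)u$ only uniformly on the sets $A(\Lambda)$, and one cannot pass to the limit in $\int F\circ\psi(t)\,d\rho_k$ the way you do with $L(t)$; it is precisely the paper's set-based argument with $K_\varepsilon$, $\overline K_{2\varepsilon}$ and the error term $C/\Lambda^8$ that survives in the nonlinear setting. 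This is why the paper rehearses that more robust argument here even though your simpler one suffices for $\mu$.
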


\begin{definition} We call $\mu^t$ the image measure of $\mu$ under $L(t)$ and $\mu_k^t$ the image measure of $\mu_k$ under $L(t)$. That is, for all measurable set $A$ of $\Hloc$, we have 
$$
\mu^t ( A) = \mu( L(t)^{-1}(A)) \mbox{ and } \mu_k^t(A) = \mu_k(L(t)^{-1}(A))\; .
$$
\end{definition}

We need the following lemma.

\begin{lemma} The flow $L(t)$ is continuous on $\Hloc$. \end{lemma}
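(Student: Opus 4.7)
The plan is to reduce continuity on the Fréchet space $\Hloc$ to quantitative semi-norm bounds. By linearity of $L(t)$, continuity at $0$ suffices: for each $R \in \N^*$ and $s = \frac{1}{2}-\frac{1}{l}$, I would seek an inequality of the form
\[
p_{R, s}(L(t) u) \le C \sum_{k} w_k\, p_{R_k, s}(u)
\]
with summable weights, valid on the natural subclass of $\Hloc$ on which $L(t)$ acts (in particular on the support of $\mu$).

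Fix $R, s$. Pick a smooth cutoff $\chi \in C_c^\infty(\R)$ equal to $1$ on $[-R - |t| - 1, R + |t| + 1]$ and supported in $[-R - |t| - 2, R + |t| + 2]$, and decompose $u = \chi u + (1 - \chi) u$. The compactly supported piece $\chi u$ lies in $H^s(\R)$ with $\|\chi u\|_{H^s(\R)} \le C p_{R + |t| + 2, s}(u)$, because multiplication by a smooth compactly supported function is bounded from $H^s_{\text{loc}}$ into $H^s(\R)$. Since $L(t)$ is Fourier multiplication by the unit-modulus symbol $e^{-it\sqrt{1 + \xi^2}}$, it acts as an isometry on $H^s(\R)$, and so
\[
p_{R, s}(L(t)(\chi u)) \le \|L(t)(\chi u)\|_{H^s(\R)} = \|\chi u\|_{H^s(\R)} \le C\, p_{R + |t| + 2, s}(u).
\]

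For the tail $(1 - \chi) u$, supported at distance at least $|t| + 1$ from $[-R, R]$, I would exploit an effectively finite propagation speed for $L(t)$. Writing $e^{-it\sqrt{1 - \lap}} = \cos(t\sqrt{1 - \lap}) - i\sqrt{1 - \lap} \cdot \frac{\sin(t\sqrt{1 - \lap})}{\sqrt{1 - \lap}}$, the two trigonometric operators have Schwartz kernels supported in $\{|x - y| \le |t|\}$ (strict finite speed of the Klein-Gordon equation in its pair formulation), while $\sqrt{1 - \lap}$ has a modified-Bessel-type kernel with exponential decay. A dyadic decomposition of the tail into annuli and a weighted convolution estimate then give, for every $N$,
\[
p_{R, s}(L(t)((1 - \chi) u)) \le C_N \sum_{k \ge 1} 2^{-N k}\, p_{R + |t| + 2^k, s}(u),
\]
which is finite whenever the local semi-norms of $u$ grow sub-exponentially in $R$—a condition verified almost surely by $\varphi$ thanks to Proposition \ref{prop-belong}.

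Combining the two estimates bounds $p_{R, s}(L(t) u)$ by a convergent weighted sum of local semi-norms of $u$, yielding the desired continuity. The main obstacle is precisely the tail bound: in contrast to the pure wave equation, $L(t)$ does not enjoy strict finite propagation speed on the complex variable $u$, so one must quantify the off-cone decay of $\sqrt{1-\lap}$ (or, equivalently, perform a stationary-phase analysis of the multiplier $e^{-it\sqrt{1+\xi^2}}$) to control the genuinely nonlocal part of the evolution.
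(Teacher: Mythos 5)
Your route is genuinely different from the paper's. The paper's proof is a one-line locality argument: it asserts that the kernel $K_t$ of $L(t)$ vanishes outside the light cone $\{|z|\le|t|\}$, so that $L(t)u$ on $[-R,R]$ depends only on $u$ on $[-R-|t|,R+|t|]$, whence $p_{R,s}(L(t)u)\le p_{R+|t|,s}(u)$ directly. You instead split $u$ into a near part, handled by the $H^s(\R)$-isometry of the unimodular multiplier (this reproduces the paper's estimate in the form $p_{R,s}(L(t)u)\lesssim p_{R+|t|+2,s}(u)$ up to the tail), and a far part, handled by off-cone decay. Your diagnosis of why the far part cannot simply be discarded is correct, and it is precisely the point the paper's proof glosses over: $e^{-it\sqrt{1-\lap}}$ is \emph{not} strictly local. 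Only $\cos(t\sqrt{1-\lap})$ and $\sin(t\sqrt{1-\lap})/\sqrt{1-\lap}$ have kernels supported in the cone; the remaining factor $\sqrt{1-\lap}$ is nonlocal, and by Paley--Wiener the symbol $e^{it\sqrt{1+\xi^2}}$ cannot have compactly supported inverse Fourier transform since $\sqrt{1+\xi^2}$ is not entire. So your decomposition is the more careful argument, at the cost of having to quantify the exponential decay of the Bessel-type kernel of $\sqrt{1-\lap}$ away from the diagonal, which you leave as a sketch but which is standard.

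The price is a genuine restriction of scope that you should not hide in a parenthesis. Your tail bound $\sum_{k\ge1}2^{-Nk}\,p_{R+|t|+2^k,s}(u)$ converges only when the local semi-norms of $u$ grow sub-exponentially in $R$, whereas $\Hloc$ as defined contains elements of arbitrarily fast growth, for which your construction does not even define $L(t)u$. Thus you prove continuity of $L(t)$ on a subclass (sub-exponentially growing, in particular tempered, elements of $\Hloc$ --- which contains the support of $\mu$ by Proposition \ref{prop-belong} and is all the paper ever uses), not on all of $\Hloc$ as the lemma literally states. Either restrict the statement to that subclass, or supply a strict-locality argument for the relevant piece of the propagator; as written, the lemma verbatim is not established. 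A cosmetic point: rename your cutoff, since $\chi$ already denotes the weight in the nonlinearity.
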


\begin{proof} The kernel of the Klein-Gordon equation (in dimension one) is given by
$$
K_t(z) = -\frac1{2\pi}\int dk e^{i\sqrt{1+k^2}t}e^{ikx} 
$$
and is equal to $0$ if $z^2\leq t^2$, see \cite{Zeenutshell,PSintro}.  What we want to point out is that since the Klein-Gordon equation has a finite propagation speed, we get that the value of $L(t) u$ in $x$ depends only on the values of $u$ in $[x-|t|,x+|t|]$. We have 
$$
D^s L(t) u(x) = L(t) D^s u(x) =  \int dy K_t(x-y) (D^s u)(y) 
$$
and $K_t(x-y)u(y) = 1_{|x-y|\leq |t|} K(x-y) u(y)= K_t(x-y) 1_{|y| \leq |x| + |t|} u(y)$ where $1$ is the indication function. Hence if $|x| \leq R$, we get
$$
K_t(x-y) (D^s u)(y) = K_t(x-y)(D^s u)_{R + |t|} (y)\; ,
$$
where $u_{R+|t|}$ is the restriction of $u$ to $[-R+|t|,R+|t|]$ and finally
$$
L(t) u(x) = L(t)u_{R+|t|} (x)\; .
$$
We take its $p_{R,s}$ semi norm, we get
$$
p_{R,s} (L(t) u) = p_{R,0} (L(t)(D^s u)_{R+|t|}) \leq \|L(t) (D^s u)_{R+|t|}\|_{L^2} = p_{R+|t|,s}(u) \; .
$$
Therefore, $L(t)$ is continuous in $\Hloc$.
\end{proof}

We now prove Theorem \ref{th-lininv}.

\begin{proof} Let $ K $ be a closed set of $\Hloc$. We call $K_\varepsilon$ the set
$$
K_\varepsilon = \{u \in \Hloc \; |\; \exists v \in K \, ;\, d (u,v) < \varepsilon\} \; .
$$
The set $K_{\varepsilon}$ is open in $\Hloc$ , and since $L(t)$ is continuous in $\Hloc$, so is the set $L(t)^{-1}(K_\varepsilon)$. Hence, we have, thanks to Proposition \ref{prop-openmu}
$$
\mu^t(K) \leq \mu^t(K_{\varepsilon}) \leq \liminf_{k\rightarrow \infty} \mu_k^t (K_{\varepsilon})\; .
$$
Since $\mu_k$ is invariant under the flow $L(t)$ (Proposition \ref{prop-invmuk}), we have 
$$
\mu^t(K) \leq \liminf_{k\rightarrow \infty} \mu_k (K_{\varepsilon})\; .
$$
Since $\liminf$ is less than $\limsup$ and a set is included in its closure, we have 
$$
\mu^t(K) \leq \limsup_{k\rightarrow \infty} \mu_k ( \overline K_{\varepsilon})\; ,
$$
with
$$
\overline K_{\varepsilon} = \{u \in \Hloc \; |\; d(u,K) \leq \varepsilon \}\; ,
$$
Where $d(u,K) = \inf_{v\in K} d(u,v)$.

As the set $ \overline K_{\varepsilon}$ is closed we have, thanks again to Proposition \ref{prop-openmu}
$$
\mu^t(K) \leq \mu ( \overline K_{\varepsilon})\;.
$$
We deduce from that
$$
\mu^t(K) \leq \inf_{\varepsilon > 0}  \mu (  \overline K_{\varepsilon})\; .
$$
 Finally, thanks to the dominated convergence theorem when $\varepsilon \rightarrow 0$, as $K$ is closed
$$
\mu^t (K) \leq \mu (K)\; .
$$
We use the reversibility of the flow to conclude. Indeed, as $L(t)$ is continuous for the topology of $\Hloc$, $L(t)^{-1}(K)$ is closed and hence
$$
\mu(K) = \mu^{-t} (L(t)^{-1} K) \leq \mu (L(t)^{-1}K) = \mu^t (K)\; .
$$
Hence for all closed set $K$, $\mu^t(K) = \mu(K)$. We get the invariance on all measurable sets using that the closed sets of $\Hloc$ generate its topological $\sigma$-algebra.
\end{proof}

\section{Definition and approximation of the Gibbs measure}

We define here the measure $\rho$ which we will prove to be invariant under the flow of \eqref{NLKG} and a sequence of measures $\rho_k$ which we use to prove the invariance of $\rho$.

\subsection{Definition of \texorpdfstring{$\rho_k$}{p k} and invariance under \texorpdfstring{$\psi_k$}{psi k}}

In this subsection, we prove the invariance of the measures $\rho_k$, which approach the Gibbs measure $\rho$, under the flows $\psi_k(t)$, which we will use later as an approximation of the flow of the cubic Klein-Gordon equation.

\begin{definition}We call $f_k$ the function on the support of $\mu$ defined as :
$$
f_k (u) = e^{-\frac1{4\pi}\int_{-\pi N_k}^{\pi N_k} \chi |\re u|^{4}}\; .
$$
It belongs to $L^1_{\mu_k}$. 
We call $f$ the function on the support of $\mu$ such that :
$$
f (u) = e^{-\frac1{4\pi}\int_{\R} \chi |\re u|^{4}}\; .
$$
It belongs to $L^1_\mu$.
We call $\rho$ and $\rho_k$ the measures on $\Hloc$ defined as 
$$
d\rho(u) = \frac{f(u)}{\Gamma} d\mu(u) \; , \; d\rho_k(u)  =\frac{f_k(u)}{\Gamma_k} d\mu_k(u) \; ,
$$
where $\Gamma= \|f\|_{L^1_\mu}$ and $\Gamma_k = \|f_k\|_{L^1_{\mu_k}}$ are normalisation factors.
\end{definition}

We approach \eqref{NLKG} by a finite-dimensional equation.

Let $E_k$ be the set of trigonometrical polynomials :
$$
E_k = \textrm{Vect} \left( \lbrace x\mapsto e^{ilx/N_k} \; |\; l=-N_kR_k ,\hdots ,N_kR_k-1 \rbrace \right) \; .
$$

Let $P_k$ be the operator on $\Hloc$, such that for all $f\in \Hloc$,
$$
P_k f(x) = f \Big( x - \ent{\frac{x+ \pi N_k}{2\pi N_k}}2\pi N_k\Big) \; .
$$
Let us note that $P_k f ( x) = f(x)$ for all $x\in [-\pi N_k,\pi N_k[$ and that $P_k f$ is $2\pi N_k$ periodic. 

Finally, let $\Pi_k$ be the Fourier multiplier :
$$
\widehat{\Pi_k f} (n) = 1_{[-R_k, R_k[}(n) \hat f(n) \; ,
$$
where $1_{[-R_k,R_k[} (n)$ is equal to $1$ if $n \in [-R_k,R_k[$ and $0$ otherwise.

The finite-dimensional equation with which we approach \eqref{NLKG} is 
\begin{equation}\label{finitedim}
i\partial_t u -\sqrt{1-\lap} u = (1-\lap)^{-1/2}\Pi_k P_k \Big( \chi(\re u)^{3} \Big)\: .
\end{equation}

\begin{proposition}\label{prop-hamil}
The equation \eqref{finitedim} is a Hamiltonian equation on $E_k$ with Hamiltonian
$$
H_k(u) =  \frac1{2\pi}\int_{-\pi N_k}^{\pi N_k} \overline u (1-\lap)u + \frac1{4\pi} \int_{-\pi N_k}^{\pi N_k} \chi |\re u|^{4} \; .
$$\end{proposition}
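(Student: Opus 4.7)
My plan is to equip the finite-dimensional space $E_k$ with an appropriate symplectic form $\omega$, compute the differential $dH_k$ along tangent directions in $E_k$, and verify that Hamilton's equation $\omega(\partial_t u, v) = dH_k(u)[v]$ for all $v \in E_k$ reproduces \eqref{finitedim}.

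The key choice is the symplectic form. Because the linear part of \eqref{finitedim} carries $\sqrt{1-\lap}$ rather than $1-\lap$, and the nonlinearity an additional $(1-\lap)^{-1/2}$, the naive imaginary $L^2$-pairing cannot work: an extra weight $(1-\lap)^{1/2}$ is needed. I would therefore take
$$
\omega(u,v) = \frac{1}{\pi}\,\Im \int_{-\pi N_k}^{\pi N_k}\bar u\,(1-\lap)^{1/2}v\,dx.
$$
In the Fourier basis of $E_k$, writing $u=\sum_l c_l e^{ilx/N_k}$ and $\lambda_l = 1+l^2/N_k^2$, this form becomes $\omega = i\sum_l N_k\sqrt{\lambda_l}\,dc_l\wedge d\bar c_l$, which is manifestly non-degenerate.

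Next I would expand $H_k$ to first order. Self-adjointness of $1-\lap$ handles the quadratic term, while the identity $\frac{d}{d\epsilon}\big|_{\epsilon=0}(\re(u+\epsilon v))^4 = 4(\re u)^3\,\re v$ handles the quartic term and produces, for any $v\in E_k$,
$$
dH_k(u)[v] = \re\left[\frac{1}{\pi}\int_{-\pi N_k}^{\pi N_k}\bar v\,\big((1-\lap)u + \chi(\re u)^3\big)\,dx\right].
$$
Although $\chi(\re u)^3$ is not in $E_k$, since $\bar v$ is $2\pi N_k$-periodic with Fourier support in $[-R_k, R_k[$, the integral is unchanged if $\chi(\re u)^3$ is replaced by its periodization $P_k(\chi(\re u)^3)$, and further by its low-frequency truncation $\Pi_k P_k(\chi(\re u)^3) \in E_k$.

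Finally I would impose Hamilton's equation $\omega(\partial_t u, v) = dH_k(u)[v]$ for every $v \in E_k$. Transferring the weight $(1-\lap)^{1/2}$ from the left side onto $\partial_t u$ via self-adjointness, and converting the imaginary part into a real part via multiplication by $i$, the identity reduces to
$$
\frac{1}{\pi}\re\int_{-\pi N_k}^{\pi N_k}\bar v\,(1-\lap)^{1/2}(i\partial_t u)\,dx = \frac{1}{\pi}\re\int_{-\pi N_k}^{\pi N_k}\bar v\,\big((1-\lap)u + \Pi_k P_k(\chi(\re u)^3)\big)\,dx,
$$
and non-degeneracy of the real-linear pairing on $E_k$ yields $(1-\lap)^{1/2}(i\partial_t u) = (1-\lap)u + \Pi_k P_k(\chi(\re u)^3)$. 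Applying $(1-\lap)^{-1/2}$ gives exactly \eqref{finitedim}. I expect the only mild difficulty to be the bookkeeping that identifies the operators $P_k$ and $\Pi_k$ from the restriction of integration to $[-\pi N_k, \pi N_k]$ and from the choice of test functions in $E_k$; once the weighted symplectic form is correctly identified, the rest is a short computation.
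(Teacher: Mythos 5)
Your proof is correct and follows essentially the same route as the paper: the paper writes the equation in Fourier coordinates as $\partial_t \overline u = J \bigtriangledown_u H_k$ with the anti-Hermitian operator $J = i N_k^{-1}(1-\lap)^{-1/2}$, which is exactly the Poisson structure dual to your weighted symplectic form $\omega(u,v)=\pi^{-1}\,\Im \int \bar u\,(1-\lap)^{1/2}v$. The only cosmetic difference is that the paper makes $P_k$ and $\Pi_k$ appear through the periodic Fourier expansion of the potential energy and the restriction of the gradient to the coordinates $u_l$ with $|l|\leq N_kR_k$, where you use the test-function/pairing argument.
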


\begin{proof} Let $H_k(u) = H_c (u) + H_p (u)$ with
$$
H_c(u) =  \frac{1}{2\pi}\int_{-\pi N_k}^{\pi N_k} \overline u (1-\lap)u \mbox{ and } H_p(u) =  \frac1{4\pi} \int_{-\pi N_k}^{\pi N_k} \chi |\re u|^{4} \; .
$$
The quantity $H_c(u)$ represents the kinetic energy, and $H_p(u)$ the potential energy.

For all $u\in E_k$, we write
$$
u = \sum_{j= -N_k R_k}^{N_k R_k -1} u_j e^{ijx/N_k} \; .
$$
We can write $H_c(u)$ as
$$
H_c(u) =  N_k \sum_{j=-N_k R_k }^{N_k R_k -1}  (1+\frac{j^2}{N_k^2})|u_j|^2\; .
$$
Therefore, 
$$
\frac{dH_c(u)}{du_l} =  N_k (1+ \frac{l^2}{N_k^2})\overline{u_l}\; .
$$
We can write $H_p(u)$ as 
$$
H_p(u) = \frac12 N_k \sum_{j_1+j_2+j_3+j_4+j_5 = 0} (P_k \chi)_{j_1}(\re u)_{j_2}(\re u)_{j_3}(\re u)_{j_4}(\re u)_{j_5} \; .
$$
Since $(\re u)_j = \frac12 (u_j + \overline{u_{-j}})$, we have $\frac{d(\re u)_j}{du_l} = \frac12 \delta_j^l $. Therefore,
$$
\frac{dH_p(u)}{du_l} = \pi N_k \frac42 \sum_{j_1+j_2+j_3+j_4 = -l} (P_k \chi)_{j_1}(\re u)_{j_2}(\re u)_{j_3}(\re u)_{j_4} = 2\pi N_k [(P_k \chi) (\re u)^3]_{-l}\; .
$$
Besides, if $v$ is real then, $\overline{v_l}= v_{-l}$ and the equation \eqref{finitedim} can be written as
$$
-i\partial_t \overline u = (1-\lap)^{-1/2} \Big( (1-\lap) \overline u + \overline{\Pi_k [(P_k \chi) (\re u)^3] }\Big)
$$
and in terms of $u_l$ as
$$
-i\partial_t \overline{u_l}  = (1+\frac{l^2}{N_k^2})^{-1/2} \Big( (1+\frac{l^2}{N_k^2}) \overline{u_l} + \overline{[(P_k \chi) (\re u)^3]_l}\Big) = (1+\frac{l^2}{N_k^2})^{-1/2} \frac1{2\pi N_k} \frac{dH_k(u)}{du_l}
$$
or in other words,
$$
\partial_t \overline u = J \bigtriangledown_u H_k
$$
with 
$$
J = i \frac1{N_k} (1-\lap)^{-1/2}
$$
a anti-Hermitian operator. Hence, \eqref{finitedim} is a Hamiltonian equation with Hamiltonian $H_k$.\end{proof}

\begin{proposition} The equation \eqref{finitedim} is globally well-posed on $E_k$. We call its flow $\psi_k$.\end{proposition}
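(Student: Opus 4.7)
The plan is to exploit the fact that $E_k$ is a finite-dimensional (real) vector space, so that the equation \eqref{finitedim} reduces to an ODE for the Fourier coefficients $(u_j)_{j=-N_kR_k}^{N_kR_k-1}$, and then to propagate local solutions globally using the conservation of $H_k$.

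First, I would observe that the right-hand side of \eqref{finitedim} maps $E_k$ into $E_k$: the Fourier multiplier $\Pi_k$ truncates to frequencies in $[-R_k,R_k)$, and then $P_k$ produces a $2\pi N_k$-periodic function whose non-zero Fourier modes lie exactly in the lattice $\frac{1}{N_k}\Z$ within that band, hence in $E_k$. Writing $u=\sum_{j=-N_kR_k}^{N_kR_k-1} u_j e^{ijx/N_k}$, the equation becomes a finite system of ODEs in $(u_j)_j$ whose right-hand side is a polynomial of degree $3$ in the $u_j$ and $\overline{u_j}$ with coefficients depending on the Fourier coefficients of $P_k\chi$. In particular, the vector field is smooth (locally Lipschitz), so the Cauchy-Lipschitz theorem yields a unique maximal solution $u\in C^1((-T_*,T^*),E_k)$ for any initial datum in $E_k$.

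Next, to upgrade this to a global solution, I would use the conservation of the Hamiltonian. By Proposition \ref{prop-hamil}, \eqref{finitedim} is the Hamiltonian flow of
$$
H_k(u) = \frac1{2\pi}\int_{-\pi N_k}^{\pi N_k} \overline u (1-\lap) u + \frac1{4\pi}\int_{-\pi N_k}^{\pi N_k}\chi |\re u|^{4},
$$
so $H_k(u(t)) = H_k(u(0))$ on the maximal interval of existence. Since $\chi \geq 0$, the potential part $H_p(u)\geq 0$, so
$$
\frac1{2\pi}\int_{-\pi N_k}^{\pi N_k}\overline u(1-\lap)u = N_k\sum_{j=-N_kR_k}^{N_kR_k-1}\Big(1+\tfrac{j^2}{N_k^2}\Big)|u_j(t)|^2 \leq H_k(u(0))
$$
for all $t$ in the interval of existence. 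In particular $\sum_j |u_j(t)|^2$ is bounded uniformly in $t$ by a constant depending only on $H_k(u(0))$ and $k$.

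Since $E_k$ is finite-dimensional, all norms are equivalent and this a priori $\ell^2$ bound on the coefficient vector $(u_j(t))_j$ forbids finite-time blow-up: if $T^* < \infty$, the solution would remain in a fixed compact subset of $E_k$ as $t\to T^*$, and the local existence theorem applied at a time close to $T^*$ would extend it beyond $T^*$, a contradiction. Hence $T^*=\infty$ and likewise $T_* = -\infty$. The flow $\psi_k(t):E_k\to E_k$ is thus defined for all $t\in\R$. The only mildly delicate point is checking that the nonlinearity stays in $E_k$ (the role of $\Pi_k$ and $P_k$); the rest is the standard ODE argument combined with the Hamiltonian bound, with the defocusing sign $\chi\geq 0$ playing its usual role.
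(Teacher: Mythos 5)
Your proof is correct and follows essentially the same route as the paper: local well-posedness from the Cauchy--Lipschitz theorem on the finite-dimensional space $E_k$ (where the cubic nonlinearity is locally Lipschitz), followed by globalization via conservation of $H_k$, whose kinetic part controls a norm on $E_k$ thanks to $\chi \geq 0$. The paper states this in two sentences; you have simply written out the details, including the worthwhile check that $\Pi_k P_k$ keeps the nonlinearity inside $E_k$.
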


\begin{proof} The space $E_k$ is of finite dimension and the equation has a locally Lipschitz-continuous non-linearity. The Hamiltonian $H_k$ controls the $p_{\pi N_k,1}$ semi-norm, which is a norm on $E_k$. \end{proof}

\begin{proposition}\label{prop-rhok} The measure $\rho_k$ is invariant under the flow $\psi_k(t)$. \end{proposition}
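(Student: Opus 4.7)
The plan is to reduce to the classical Liouville-theorem-plus-energy-conservation argument for finite-dimensional Hamiltonian systems. First, I would rewrite $\rho_k$ explicitly as an absolutely continuous measure with respect to Lebesgue measure on $E_k$. Remark \ref{rem-Glaw} says that, when $E_k$ is parametrised by the $2N_kR_k$ complex Fourier coefficients $(u_l)_{l=-N_kR_k}^{N_kR_k-1}$, the law of $\phi_k$ is the centered Gaussian whose density with respect to Lebesgue measure $dL(u) = \prod_l du_l\, d\overline{u_l}$ is proportional to $\exp(-\sum_l |u_l|^2 N_k(1+l^2/N_k^2))$, and the exponent equals $-H_c(u)$ by the computation of Proposition \ref{prop-hamil}. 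Multiplying by $f_k(u) = e^{-H_p(u)}$ and renormalising yields
$$
d\rho_k(u) = \frac{1}{\Gamma_k'}\, e^{-H_k(u)}\, dL(u)
$$
for some positive constant $\Gamma_k'$.

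Next, I would argue that $\psi_k(t)$ preserves Lebesgue measure $dL$ on $E_k$. By Proposition \ref{prop-hamil}, the equation \eqref{finitedim} is Hamiltonian with Hamiltonian $H_k$ and symplectic operator $J = i\frac{1}{N_k}(1-\lap)^{-1/2}$, which in the coordinates $(u_l)$ is a constant diagonal skew-Hermitian operator. The associated Liouville volume on $E_k$ is therefore a constant multiple of $dL$, and Liouville's theorem gives preservation under the Hamiltonian flow $\psi_k(t)$. Equivalently, a direct computation shows that the vector field in \eqref{finitedim} has vanishing divergence in the $(u_l, \overline{u_l})$ coordinates, since $J$ is anti-Hermitian while the Hessian of $H_k$ is Hermitian.

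Finally, I combine this with conservation of energy. Since $H_k$ is a conserved quantity of the flow (being the Hamiltonian), for every measurable set $A \subset E_k$,
$$
\rho_k(\psi_k(t)^{-1}(A)) = \frac{1}{\Gamma_k'}\int_{\psi_k(t)^{-1}(A)} e^{-H_k(u)}\, dL(u) = \frac{1}{\Gamma_k'}\int_A e^{-H_k(v)}\, dL(v) = \rho_k(A),
$$
via the change of variables $v = \psi_k(t)u$ (Jacobian $1$ by the previous step) and $H_k(\psi_k(-t)v) = H_k(v)$. Since $\rho_k$ is supported on $E_k$, this suffices. The only step requiring any care is the bookkeeping in the first paragraph: checking that the Gaussian covariance matrix of Remark \ref{rem-Glaw} inverts to exactly reproduce the kinetic Hamiltonian $H_c$ of Proposition \ref{prop-hamil}, including the normalisation factors of $N_k$ and $2\pi$. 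Once this identification is in place, everything happens on the finite-dimensional space $E_k$ and no analytic difficulty arises; the result is a textbook instance of the Lebowitz--Rose--Speer scheme.
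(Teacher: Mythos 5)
Your proposal is correct and follows essentially the same route as the paper: identify $d\rho_k$ with $\frac{1}{\Gamma_k'}e^{-H_k}dL$ via Remark \ref{rem-Glaw} and Proposition \ref{prop-hamil}, invoke Liouville's theorem for the invariance of Lebesgue measure under the Hamiltonian flow, and conclude by conservation of $H_k$. Your extra care about the constancy of the symplectic operator $J$ (so that the Liouville volume is a constant multiple of Lebesgue measure) is a point the paper leaves implicit, but the argument is the same.
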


\begin{proof} The flow $\psi_k$ is Hamiltonian, hence the Lebesgue measure is invariant under $\psi_k$ thanks to Liouville's theorem. Thanks to Remark \ref{rem-Glaw}, we have that
$$
d\mu_k (u)  = d_k e^{-H_c (u)} dL(u)
$$
where $d_k$ is a normalisation factor and $L$ is the Lebesgue measure. Therefore,
$$
d\rho_k(u) = \frac{f_k(u)}{\Gamma_k}d_k e^{-H_c (u)} dL(u) = \frac{d_k}{\Gamma_k} e^{-H_k(u)}dL(u)\; .
$$
Since $H_k(u)$ is a Hamiltonian for the equation, it is invariant under the flow of the equation, hence the measure $\rho_k$ is invariant under $\psi_k(t)$. \end{proof}

\subsection{Approximation of \texorpdfstring{$\rho$}{p}}

In this subsection, we prove that we can approach $\rho$ by $\rho_k$.

\begin{proposition}\label{prop-openrho} For all open set $U$ of $\Hloc$, we have 
$$
\rho (U) \leq \liminf_{k \rightarrow \infty} \rho_k(U) \; ,
$$
and for all closed set $C$, we have 
$$
\rho(C) \geq \limsup_{k\rightarrow \infty} \rho_k(C)\; .
$$
\end{proposition}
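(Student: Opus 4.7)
The strategy would follow the same Portmanteau-type reduction used in the proof of Proposition \ref{prop-openmu}: the two inequalities in the statement are jointly equivalent to the weak convergence $\rho_k \rightarrow \rho$, hence to showing that $\int F\, d\rho_k \rightarrow \int F\, d\rho$ for every bounded Lipschitz function $F : \Hloc \rightarrow \R$. Writing out the densities, this amounts to
$$\frac{\E[F(\phi_k)\, f_k(\phi_k)]}{\Gamma_k} \longrightarrow \frac{\E[F(\varphi)\, f(\varphi)]}{\Gamma}.$$
Positivity of $\Gamma$ is immediate: by Fubini and Gaussianity of $\re\varphi(x)$, one has $\E \int \chi |\re \varphi|^4\, dx \leq C \|\chi\|_{L^1}$, so $\int \chi |\re \varphi|^4 < \infty$ almost surely and $f(\varphi) > 0$ almost surely. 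Taking $F \equiv 1$ reduces the normalisation $\Gamma_k \rightarrow \Gamma$ to the convergence of the numerators, so the whole proposition boils down to showing that $\E[F(\phi_k) f_k(\phi_k)] \rightarrow \E[F(\varphi) f(\varphi)]$ for every bounded Lipschitz $F$.

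I would split this difference as
$$\E\bigl[(F(\phi_k) - F(\varphi))\, f_k(\phi_k)\bigr] + \E\bigl[F(\varphi)(f_k(\phi_k) - f(\varphi))\bigr].$$
Since $0 \leq f_k \leq 1$, the first summand is controlled by $|F|_{\mathrm{lip}}\, \E[d(\phi_k, \varphi)]$, which tends to zero by the exact computation used in the proof of Proposition \ref{prop-openmu}. The second summand is bounded by $\|F\|_\infty \E|f_k(\phi_k) - f(\varphi)|$, so everything rests on showing that $f_k(\phi_k) \rightarrow f(\varphi)$ in $L^1_\Omega$. The elementary bound $|e^{-a} - e^{-b}| \leq |a - b|$ for $a,b \geq 0$ reduces this in turn to the $L^1_\Omega$ convergence of the quartic integrals $\int_{-\pi N_k}^{\pi N_k} \chi |\re \phi_k|^4\, dx$ towards $\int_\R \chi |\re \varphi|^4\, dx$.

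The quartic convergence decomposes into a tail piece and a bulk piece. The tail $\int_{|x| > \pi N_k} \chi |\re \phi_k|^4\, dx$ has expectation bounded by $C \|\chi\|_{L^1(|x| > \pi N_k)}$, which vanishes by integrability of $\chi$ and the uniform Gaussian moment bound $\E|\re \phi_k(x)|^4 \leq C$ (inherited from Remark \ref{rem-Glaw}). The bulk piece $\int_\R \chi\, \bigl||\re \phi_k|^4 - |\re \varphi|^4\bigr|\, dx$ is the main technical obstacle: using the pointwise bound $\bigl||a|^4 - |b|^4\bigr| \leq 4(|a|^3 + |b|^3)|a-b|$ together with Cauchy--Schwarz applied to the factorisation $\chi = \chi^{1/2} \cdot \chi^{1/2}$, one reduces to controlling the product of $\|\chi^{1/2}(\phi_k - \varphi)\|_{L^2_x, L^2_\Omega}$ and $\|\chi^{1/2}(|\phi_k|^3 + |\varphi|^3)\|_{L^2_x, L^2_\Omega}$. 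The second factor is uniformly bounded thanks to $\chi \in L^1$ and the uniform Gaussian moment bound, while the first factor satisfies
$$\bigl\|\chi^{1/2}(\phi_k - \varphi)\bigr\|_{L^2_x, L^2_\Omega}^2 \leq \bigl\|(1+x^2)^{-1/2}(\phi_k - \varphi)\bigr\|_{L^\infty_\R, L^2_\Omega}^2 \int (1+x^2)\, \chi(x)\, dx,$$
whose first factor goes to $0$ by Proposition \ref{prop-defphi}, and whose second factor is finite by the hypothesis $\chi(x) \leq C(1+x^2)^{-3\alpha/2}$ with $\alpha > 1$. As flagged in the remark after Proposition \ref{prop-defphi}, this is precisely where the weighted integrability of $\chi$ is indispensable: the convergence of $\phi_k$ towards $\varphi$ is only available in a weighted $L^\infty$ topology, and the extra $(1+x^2)$ factor must be absorbed into $\chi$.
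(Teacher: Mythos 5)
Your proposal is correct and follows essentially the same route as the paper: reduction to weak convergence via bounded Lipschitz test functions, convergence of the normalisation constants, the term $\E[d(\phi_k,\varphi)]\to 0$ already established for Proposition \ref{prop-openmu}, a tail estimate using $\chi\in L^1$ (the paper's first lemma on $\|f-f_k\|_{L^1_{\mu_k}}$), and a bulk estimate absorbing the weight from the weighted convergence of $\phi_k$ into $\chi$ (the paper's second lemma on $\|f\circ\phi_k-f\circ\varphi\|_{L^1_\Omega}$). The only cosmetic difference is that your Cauchy--Schwarz with $\chi^{1/2}\cdot\chi^{1/2}$ requires $(1+x^2)\chi\in L^1$ rather than the paper's $\sqrt{1+x^2}\,\chi\in L^1$ (obtained by taking the $L^4_\Omega$ norms pointwise in $x$ first), but both hold under the standing hypothesis $\alpha>1$.
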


To prove this proposition, we prove first some lemmas.

\begin{lemma}The sequence $\|f-f_k\|_{L^1_{\mu_k}}$ converges towards $0$ when $k$ goes to $\infty$. the sequence $(f_k)_k$ converges towards $f$ in $L^1_\mu$.
\end{lemma}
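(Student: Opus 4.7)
The strategy is to derive a single pointwise estimate for $|f(u) - f_k(u)|$ that serves both statements, and then take expectations. Since $\chi \geq 0$, both exponents are nonpositive, and the elementary inequality $|e^{-a} - e^{-b}| \leq |a-b|$ for $a, b \geq 0$ gives
$$
|f(u) - f_k(u)| \leq \frac{1}{4\pi} \int_{|x| \geq \pi N_k} \chi(x) |\re u(x)|^4 \, dx.
$$
It thus suffices to check that the expectation of the right-hand side vanishes as $k \to \infty$, once for $u = \phi_k$ (giving the $L^1_{\mu_k}$ statement) and once for $u = \varphi$ (giving the $L^1_\mu$ statement).

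For the $L^1_{\mu_k}$ statement, Fubini yields
$$
\|f - f_k\|_{L^1_{\mu_k}} \leq \frac{1}{4\pi} \int_{|x| \geq \pi N_k} \chi(x) \, \E(|\re \phi_k(x)|^4) \, dx.
$$
The variable $\phi_k(x)$ is a centered circular complex Gaussian, hence its fourth moment is bounded by a multiple of the square of its variance: $\E(|\re \phi_k(x)|^4) \leq C\, \E(|\phi_k(x)|^2)^2$. By the computation already appearing in the proof that $\varphi \notin L^2$,
$$
\E(|\phi_k(x)|^2) = \sum_{j = -N_k R_k}^{N_k R_k - 1} \frac{1}{N_k (1 + j^2/N_k^2)},
$$
which is bounded uniformly in $k$ and $x$ (the terms with $|j| \leq N_k$ contribute at most $2$, and those with $|j| > N_k$ contribute at most $\sum_{|j|>N_k} N_k / j^2 \leq 2$). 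The right-hand side is therefore bounded by a constant times $\int_{|x| \geq \pi N_k} \chi(x)\,dx$, which tends to $0$ because $\chi \in L^1(\R)$.

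For the $L^1_\mu$ statement, Fubini reduces the problem to showing
$$
\E \left( \int_{|x| \geq \pi N_k} \chi(x) |\re \varphi(x)|^4 \, dx \right) \to 0.
$$
Since $\chi \in L^1(\R)$ we have $\chi^{1/4} \in L^4(\R)$, so Proposition \ref{prop-belong} applied with $p = 4$ and $t = 0$ gives $\chi^{1/4} \varphi \in L^4_\Omega, L^4_\R$, that is, $\chi |\re \varphi|^4 \in L^1(\Omega \times \R)$. The integrand is then dominated by this integrable majorant and vanishes pointwise on $\Omega \times \R$ as $k \to \infty$, so dominated convergence concludes. The only step beyond routine bookkeeping is the uniform variance bound for $\phi_k(x)$ in the first part, which reduces to recognising the sum above as a bounded discretisation of $\int_\R (1+y^2)^{-1}\,dy$.
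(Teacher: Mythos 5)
Your proof is correct and follows essentially the same route as the paper: the pointwise bound $|f(u)-f_k(u)|\leq \frac{1}{4\pi}\int_{|x|\geq \pi N_k}\chi|\re u|^4$, a fourth-moment Gaussian bound that is uniform in $x$ (the paper cites the $\|\phi_k\|_{L^\infty_x,L^4_\Omega}$ bound from Proposition \ref{prop-belong}, you re-derive the variance sum directly), and the tail $\int_{|x|\geq\pi N_k}\chi\to 0$ from $\chi\in L^1$. Your use of dominated convergence for the $L^1_\mu$ part is a cosmetic variation on the paper's identical tail estimate.
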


\begin{proof} Let $J_k  = \R \smallsetminus [-\pi N_k, \pi N_k]$. We have for all $u$
$$
\Big| f(u) -f_k(u)\Big| = \frac1{4\pi } \int_{J_k} \chi |u|^{4}\; .
$$
By taking its $L^1_{\mu_k}$ norm, we get
$$
\|f_k -f\|_{L^1_{\mu_k}} \leq \int_{J_k} \chi \|u\|_{L^{4}_{\mu_k}}^{4} \leq \|u\|_{L^\infty_x,L^{4}_{\mu_k}}^{4} \int_{J_k} \chi \; .
$$
We recall from the proof of Proposition \ref{prop-belong} that
$$
\|u\|_{L^\infty_x,L^{4}_{\mu_k}} = \|\phi_k\|_{L^\infty_x,L^{4}_{\Omega}} \leq C \Big( \int \frac{dy}{1+y^2} \Big)^{1/2}
$$
hence it is bounded independently from $k$. 

Finally, we have 
$$
\|f_k -f\|_{L^1_{\mu_k}} \leq C \int_{J_k} \chi
$$
and since $\chi \in L^1$, $\|f_k -f\|_{L^1_{\mu_k}}$ goes to $0$ when $k\rightarrow  \infty$.

For the same reasons, simply replacing $L^4_{\mu_k}$ by $L^4_\mu$, we have
$$
\|f_k -f\|_{L^1_{\mu}} \leq C \int_{J_k} \chi
$$
and thus converges towards $0$ when $k$ goes to $\infty$.
\end{proof}

\begin{lemma} The sequence $(f\circ \phi_k)_k$ converges towards $f\circ \varphi$ in $L^1_\Omega$.\end{lemma}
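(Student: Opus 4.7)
The plan is to exploit that $f$ is a decreasing exponential of a nonnegative quantity, so it is automatically bounded by $1$ and Lipschitz against its argument. Since $0 \leq a, b$ implies $|e^{-a} - e^{-b}| \leq |a-b|$, I would start from
$$
|f(\phi_k)(\omega) - f(\varphi)(\omega)| \leq \frac{1}{4\pi} \int_\R \chi(x) \bigl| |\re \phi_k(\omega,x)|^4 - |\re\varphi(\omega,x)|^4 \bigr| \, dx,
$$
then use the pointwise inequality $|a^4 - b^4| \leq 4(|a|^3 + |b|^3)|a-b|$. Taking the $L^1_\Omega$ norm and applying Fubini gives
$$
\|f\circ\phi_k - f\circ\varphi\|_{L^1_\Omega} \leq \frac{1}{\pi} \int_\R \chi(x)\,\E\bigl[ (|\re\phi_k(x)|^3 + |\re\varphi(x)|^3)\,|\re\phi_k(x)-\re\varphi(x)| \bigr] \, dx.
$$

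Next I would estimate the inner expectation by Hölder's inequality with exponents $4/3$ and $4$:
$$
\E\bigl[(|\re\phi_k(x)|^3 + |\re\varphi(x)|^3)\,|\re\phi_k(x)-\re\varphi(x)|\bigr] \leq \bigl(\|\phi_k(x)\|_{L^4_\Omega}^3 + \|\varphi(x)\|_{L^4_\Omega}^3\bigr)\,\|\phi_k(x)-\varphi(x)\|_{L^4_\Omega}.
$$
The key point is that $\phi_k(x)$, $\varphi(x)$ and $\phi_k(x)-\varphi(x)$ are all Gaussian random variables (the first two by Remark \ref{rem-Glaw} and by $L^2_\Omega$-passage to the limit, and their difference is again Gaussian since Gaussian vectors form a closed subspace of $L^2_\Omega$). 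Therefore all their $L^p_\Omega$ norms are equivalent to the $L^2_\Omega$ norm, with constants depending only on $p$.

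Using this Gaussian equivalence, together with the uniform in $k$ and $x$ bound $\|\phi_k(x)\|_{L^2_\Omega} + \|\varphi(x)\|_{L^2_\Omega} \lesssim (\int dy/(1+y^2))^{1/2}$ established in the proof of Proposition \ref{prop-belong}, the $(|\cdot|^3 + |\cdot|^3)$ factor is controlled by a constant. For the remaining factor, the lemmas preceding Proposition \ref{prop-defphi} (applied with $s=0$) yield
$$
\|\phi_k(x) - \varphi(x)\|_{L^4_\Omega} \lesssim \|\phi_k(x) - \varphi(x)\|_{L^2_\Omega} \leq \varepsilon_k \sqrt{1+x^2}
$$
with $\varepsilon_k \to 0$ uniformly in $x$. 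Plugging back, we get
$$
\|f\circ\phi_k - f\circ\varphi\|_{L^1_\Omega} \lesssim \varepsilon_k \int_\R \chi(x)\sqrt{1+x^2}\,dx,
$$
and the integral is finite by the standing hypothesis $\sqrt{1+x^2}\,\chi \in L^1$, so the right-hand side tends to zero. There is no genuine obstacle here; the only subtle point is invoking the Gaussian equivalence of norms to pass from the $L^2_\Omega$ convergence supplied by the earlier lemmas to the $L^4_\Omega$ moment needed after Hölder, and recognising that the weight $\sqrt{1+x^2}$ arising from that convergence is exactly absorbed by the assumption made on $\chi$.
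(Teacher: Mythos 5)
Your proof is correct and follows essentially the same route as the paper: the pointwise bound $|f\circ\phi_k - f\circ\varphi| \lesssim \int \chi\,|\phi_k-\varphi|(|\phi_k|^3+|\varphi|^3)$, H\"older in $\omega$ with exponents $4/3$ and $4$, the uniform bound on $\|\phi_k\|_{L^\infty_x,L^4_\Omega}$, the convergence of $\phi_k$ to $\varphi$ in $\sqrt{1+x^2}L^\infty_x,L^4_\Omega$ (which, as you correctly note, follows from the $L^2_\Omega$ convergence by Gaussian equivalence of moments), and finally the hypothesis $\sqrt{1+x^2}\,\chi\in L^1$. You merely make explicit a few steps the paper leaves implicit (the two elementary pointwise inequalities and the passage from $L^2_\Omega$ to $L^4_\Omega$), which is fine.
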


\begin{proof}For all $\omega$, we have 
$$
\Big| f\circ \phi_k (\omega) - f\circ \varphi(\omega) \Big| \lesssim \int \chi | \phi_k(\omega) - \varphi(\omega)| (|\phi_k(\omega)|^3+|\varphi(\omega)|^3)\; .
$$
By taking its $L^1_\Omega$ norm, we get
$$
\|f\circ \phi_k - f\circ \varphi\|_{L^1_\Omega} \lesssim \int \chi  \| \phi_k - \varphi\|_{L^{4}_\Omega} (\|\phi_k(\omega)\|_{L^{4}_\Omega}^3+ \|\varphi(\omega)\|_{L^{4}_\Omega}^3)\; .
$$
We use the facts that $\|\phi_k\|_{L^\infty_x, L^{4}_\Omega}$ is uniformly bounded in $k$, that $\phi_k$ converges towards $\phi$ in \\
$\sqrt{1+x^2}L^\infty_x, L^{4}_\Omega$, and that $\chi \sqrt{1+x^2}$ belongs to $L^1$ to conclude.\end{proof}

We prove Proposition \ref{prop-openmu}. For that, we prove that $\rho_k$ converges weakly towards $\rho$.
\begin{proof}
Let $F$ be a bounded Lipschitz-continuous function. We have
$$
\E_\rho(F) - \E_{\rho_k}(F) = \int F(u) \frac{f(u)}{\Gamma} d\mu(u) - \int F(u) \frac{f_k(u)}{\Gamma_k} d\mu_k(u) = I +II
$$
with 
$$
I  = \int F(u) \Big( \frac{f(u)}{\Gamma} - \frac{f_k(u)}{\Gamma_k}\Big) d\mu_k (u)
$$
and
\begin{eqnarray*}
II&=&  \int F(u) \frac{f(u)}{\Gamma}d\mu(u) - \int F(u) \frac{f(u)}{\Gamma} d\mu_k(u) \\
 &= & \int \Big( F\circ \varphi(\omega) \frac{f\circ \varphi(\omega)}{\Gamma} - F\circ \phi_k(\omega) \frac{f\circ \phi_k(\omega)}{\Gamma}\Big) d\mathbb P(\omega) \; .
\end{eqnarray*}

For $I$, we use that $F$ is bounded, thus
$$
|I| \leq \|F\|_{L^\infty} \|\frac{f}{\Gamma} - \frac{f_k}{\Gamma_k}\|_{L^1_{\mu_k}}\; .
$$
Then, $\Gamma = \|f\|_{L^1_\mu}$ and $\Gamma_k = \|f_k\|_{L^1_{\mu_k}}$. Hence,
$$
|\Gamma-\Gamma_k| \leq \|f\|_{L^1_\mu}- \|f\|_{L^1_{\mu_k}} + \|f-f_k\|_{L^1_{\mu_k}}
$$
and besides 
$$
 \|f\|_{L^1_\mu}- \|f\|_{L^1_{\mu_k}} \leq \|f\circ \phi_k - f\circ \varphi\|_{L^1_\Omega}\; .
$$
Thanks to the previous lemmas, $\Gamma_k \rightarrow \Gamma$. Besides, $\|f-f_k\|_{L^1_{\mu_k}}$ converges towards $0$. Therefore, $I$ goes to $0$ when $k\rightarrow \infty$.

We write $II$ as $II = II.1 + II.2$ with
$$
II.1  = \int F \circ \varphi(\omega) \frac{f\circ \varphi(\omega) - f\circ \phi_k(\omega)}{\Gamma} d\mathbb P(\omega) 
$$
and 
$$
II.2 = \int \frac{f\circ \phi_k(\omega)}{\Gamma}\Big( F \circ \varphi(\omega) - F \circ \phi_k (\omega) \Big) d\mathbb P(\omega) \; .
$$
For $II.1$ we use that $F$ is bounded, hence
$$
|II.1| \leq \|F\|_{L^\infty} \frac1\Gamma \|f \circ \varphi - f\circ \phi_k\|_{L^1_\Omega}
$$
which ensures that $II.1$ goes to $0$ when $k \rightarrow \infty$.

For $II.2$, we use first that $f$ is less than $1$, hence
$$
|II.2| \leq \frac1\Gamma \int \Big| F\circ \phi_k (\omega) - F \circ \varphi(\omega) \Big| d\mathbb P(\omega)\; .
$$
Then, as $F$ is Lipschitz-continuous, we have that 
$$
\Big|  F\circ \phi_k (\omega) - F \circ \varphi(\omega) \Big| \leq C d(\varphi(\omega), \phi_k(\omega)) \; .
$$

We previously proved that $\E(d(\varphi,\phi_k))$ converged towards $0$ (this is used in the proof of the weak convergence of $\mu_k$ towards $\mu$), hence $\rho_k$ converges weakly towards $\rho$ which is equivalent to : for all open sets $U$,
$$
\rho(U) \leq \liminf_{k\rightarrow \infty} \rho_k(U) \; ;
$$
or to : for all closed sets $C$,
$$
\rho(C) \geq \limsup_{k\rightarrow \infty } \rho_k(C) \; .
$$ \end{proof}

\section{Local properties and approximation of \texorpdfstring{$\psi$}{psi}}

In this section, we prove the local posedness of $\psi$. Then, we build an extension of $\psi_k$ to the whole $\Hloc$, prove that it is globally well-posed, and that the sequence $\psi_k(t)$ converges locally in time, uniformly in some sets of $\Hloc$, towards $\psi$, the flow of the cubic KG equation. Then, we prove that the flows $\psi(t)$ and $\psi_k(t)$ are continuous with regard to the initial datum, locally in time for $\psi$ and globally for $\psi_k$.

\subsection{Local well-posedness of \texorpdfstring{$\psi$}{psi}}

In this subsection, we prove that the flow is locally well-posed in some sets.

In the rest of the paper $L^p_\tau$ denotes the $L^p([-1,1])$ space in time. When we use the letter $t$ instead of $\tau$, we either precise the support of integration, or we consider that this support is $\R$. 

\begin{proposition}\label{prop-lwp} There exists $C\geq 1$ such that for all $\Lambda \geq 1$ and all $\varepsilon > 0$, with $T = \frac{\varepsilon^2}{C \Lambda^{4}}$, for all $u_0\in \Hloc$ such that
$$
\|\chi^{1/3} | L(\tau) u_0|\|_{L^6_\tau,L^6_x} \leq \Lambda
$$
then the Cauchy problem 
\begin{equation}\label{KGcomp} \left \lbrace{\begin{tabular}{ll}
$i\partial_t u - \sqrt{1-\lap} u = (1-\lap)^{-1/2} \chi (\re u)^3 $ \\
$u_{|t=0} = u_0$ \end{tabular}} \right.
\end{equation}
has a unique solution in $\mathcal C ([-T,T],H^1) + L(t) u_0$. We call the flow of the equation $\psi$. We have
$$
\|\psi(t)u_0 - L(t)u_0\|_{H^1} \leq \varepsilon \Lambda \; .
$$ 
\end{proposition}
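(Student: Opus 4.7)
The plan is a Banach fixed-point argument for the Duhamel equation satisfied by the nonlinear correction $v := u - L(t)u_0$. Writing $w(s) = L(s)u_0 + v(s)$, the Cauchy problem \eqref{KGcomp} becomes $v = \Phi(v)$ with
$$\Phi(v)(t) := -i\int_0^t L(t-s)(1-\lap)^{-1/2}\bigl[\chi(\re w(s))^3\bigr]\,ds,$$
and the natural ball in which to solve is $B_T = \{v\in\mathcal{C}([-T,T],H^1):v(0)=0,\ \|v\|_{L^\infty_t H^1_x}\leq\varepsilon\Lambda\}$. Since $L(t)$ is a unitary on $H^1$ and $(1-\lap)^{-1/2}:L^2\to H^1$ is an isomorphism, $\|\Phi(v)(t)\|_{H^1}\leq\int_0^{|t|}\|\chi(\re w(s))^3\|_{L^2_x}\,ds$, so the whole task reduces to controlling this single integral.

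The central inequality is a double Hölder argument built on the split $\chi = (\chi^{1/3})^3$. In space, $\|\chi\,abc\|_{L^2_x}\leq\prod_{i=1}^3\|\chi^{1/3}a_i\|_{L^6_x}$ (since $\tfrac16+\tfrac16+\tfrac16=\tfrac12$), and integrating in time on $[-T,T]\subset[-1,1]$ via the split $\tfrac16+\tfrac16+\tfrac16+\tfrac12=1$ gives
$$\int_{-T}^T\|\chi\,abc\|_{L^2_x}\,ds\leq (2T)^{1/2}\prod_{i=1}^3\|\chi^{1/3}a_i\|_{L^6_\tau L^6_x}.$$
The hypothesis directly handles the linear contribution by $\Lambda$. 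For the $v$-contribution, I combine $\chi\in L^1\cap L^\infty$ (hence $\chi^{1/3}\in L^6_x$) with the one-dimensional Sobolev embedding $H^1(\R)\hookrightarrow L^\infty(\R)$ to get $\|\chi^{1/3}v\|_{L^6_\tau L^6_x}\leq(2T)^{1/6}\|\chi^{1/3}\|_{L^6_x}\|v\|_{L^\infty_t L^\infty_x}\lesssim T^{1/6}\|v\|_{L^\infty_t H^1_x}$.

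Assembling these bounds, and using the identity $a_1^3-a_2^3=(a_1-a_2)(a_1^2+a_1 a_2+a_2^2)$ with $a_i=\re w_i$ for the Lipschitz estimate, yields
$$\|\Phi(v)\|_{L^\infty_t H^1_x}\leq C_1 T^{1/2}\bigl(\Lambda+C_1 T^{1/6}\varepsilon\Lambda\bigr)^3,$$
$$\|\Phi(v_1)-\Phi(v_2)\|_{L^\infty_t H^1_x}\lesssim T^{2/3}\bigl(\Lambda+C_1 T^{1/6}\varepsilon\Lambda\bigr)^2\|v_1-v_2\|_{L^\infty_t H^1_x}.$$
Taking $T=\varepsilon^2/(C\Lambda^4)$ with $C$ sufficiently large, and working in the regime where $T^{1/6}\varepsilon\Lambda\lesssim\Lambda$ (so that $\Lambda$ dominates inside the parenthesis), the first right-hand side is bounded by $C_1' \varepsilon\Lambda/\sqrt C\leq\tfrac12\varepsilon\Lambda$ and the Lipschitz constant by $C_1''/\sqrt C\leq\tfrac12$. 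The Banach fixed-point theorem then produces a unique $v\in B_T$; a standard a priori continuity bootstrap upgrades this to uniqueness in the whole affine class $L(t)u_0+\mathcal{C}([-T,T],H^1)$, and the conclusion $\|\psi(t)u_0-L(t)u_0\|_{H^1}=\|v(t)\|_{H^1}\leq\varepsilon\Lambda$ is built in. The only real obstacle is the Hölder bookkeeping: engineering a single norm $\|\chi^{1/3}L(\tau)u_0\|_{L^6_\tau L^6_x}$ that simultaneously governs both the self-map and the contraction estimate, and then tuning the exponents of $T,\varepsilon,\Lambda$ so that the prescribed $T=\varepsilon^2/(C\Lambda^4)$ closes both inequalities at the same scale.
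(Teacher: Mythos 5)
Your proposal is correct and follows essentially the same route as the paper: a contraction on $v=u-L(t)u_0$ in the ball of radius $\varepsilon\Lambda$ of $\mathcal C([-T,T],H^1)$, with the same H\"older splitting $\chi=(\chi^{1/3})^3$ in space and time, the hypothesis controlling the linear part and $H^1(\R)\hookrightarrow L^\infty$ together with the integrability of $\chi$ controlling the correction. The exponent bookkeeping and the choice $T=\varepsilon^2/(C\Lambda^4)$ match the paper's argument.
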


\begin{proof} The problem \eqref{KGcomp} is equivalent to the problem on $v = u - L(t)u_0$ : 
\begin{equation} \label{KGonv}
\left \lbrace{\begin{tabular}{ll}
$i\partial_t v- \sqrt{1-\lap} v  = (1-\lap)^{-1/2} \chi (\re (L(t)u_0 + v))^{3} $ \\
$v_{|t=0} = 0$ \end{tabular}}\right. .
\end{equation}
The Duhamel formulation of this equation is 
$$
v (t) = A(v)(t) = -i \int_{0}^t L(t-\tau)(1-\lap)^{-1/2} \chi (\re (L(\tau)u_0 + v(\tau)))^{3} d\tau \; .
$$
We use a contraction argument on $A$. 

For all $t\in [-1,1]$, we have 
$$
\|A(v)(t)\|_{H^1} \leq \int_{0}^t \|L(t-\tau)(1-\lap)^{-1/2} \chi (\re (L(\tau)u_0 + v(\tau)))^{3} \|_{H^1}d\tau \; .
$$
Hence,
$$
\|A(v)(t)\|_{H^1} \leq \int_{0}^t \| \chi |\re (L(\tau)u_0 + v(\tau))|^{3}\|_{L^2}d\tau \; .
$$
And with a triangle inequality, we get
$$
\|A(v)(t)\|_{H^1} \leq \int_{0}^t \Big( \|\chi |L(\tau)u_0|^3\|_{L^2} + \|\chi\|_{L^2}\|v(\tau)\|_{L^\infty}^3 \Big) d\tau\; .
$$
Since the dimension is $1$, $L^\infty$ is embedded into $H^1$ and thus, for $T \leq 1$,
$$
\|A(v)\|_{L^\infty([-T,T],H^1)} \lesssim T^{1/2} \Lambda^3 + T \|v\|_{L^\infty([-T,T],H^1)}^3\; .
$$
Hence, if $\|v\|_{L^\infty([-T,T],H^1)}$ is less than $\varepsilon\Lambda$ and $T$ is less than $\frac{\varepsilon^2}{C\Lambda^{4}}$ with $C$ big enough, we have 
$$
\|A(v)\|_{L^\infty([-T,T],H^1)} \leq \Lambda \varepsilon\; .
$$

The ball of $L^\infty([-T,T],H^1)$ of radius $\varepsilon\Lambda$ is stable under $A$.

Let $v$ and $w$ in this ball. We have 
\begin{eqnarray*}
\|A(v)(t) - A(w)(t)\|_{H^1}  &\lesssim & \int_{0}^t \Big( \|\chi^{1/3}|L(\tau)u_0|\|_{L^{6}}^{2} + \|\chi^{1/3}|v|\|_{L^{6}}^{2}+ \|\chi^{1/3}|w|\|_{L^{6}}^{2}\Big) \|v-w\|_{L^{6}} \\
 &\lesssim & \Big( |t|^{1/2+1/6} \Lambda^{2} + |t| \Lambda^{2} \Big) \|v-w\|_{L^\infty([-T,T],H^1)}
\end{eqnarray*}
which makes $A$ contracting as long as $T \leq \frac{\varepsilon^2}{C\Lambda^{4}}$ with $C$ big enough.

Therefore, there exists a unique fixed point of $A$ and thus a unique solution of \eqref{KGcomp} in $L(t)u_0 + \mathcal C ([-T,T],H^1)$. \end{proof}

\subsection{Extension of \texorpdfstring{$\psi_k$}{psi k} to \texorpdfstring{$\Hloc$}{H loc 1/2-}}\label{subsec-exten}

In order to approach the flow $\psi$ by the flows $\psi_k$, we need to extend $\psi_k$ from $E_k$ to the support of $\mu$, and hence to the support of $\rho$.

To extend the flow $\psi_k(t)$ to the support of $\mu$, we define it as the flow of the equation
\begin{equation}\label{app}
i\partial_t u - \sqrt{1-\lap} u = (1-\lap)^{-1/2}\Pi_k P_k \Big( \chi (\re u)^3\Big)\; .
\end{equation}

\begin{proposition} There exists $C\geq 1$ such that for all $k$, all $\Lambda \geq 1$, all $\varepsilon > 0$ and with $T = \frac{\varepsilon^2}{C \Lambda^{4}}$, for all $u_0$ such that
$$
\|\chi | L(\tau) u_0|^{3}\|_{L^2_\tau([t_0-1,t_0+1],L^{2}([-\pi N_k, \pi N_k]))} \leq \Lambda^3
$$
and all $v_0\in E_k$ such that $p_{\pi N_k, 1}(v) \leq \varepsilon \Lambda$ then there exists a unique solution of 
$$
\left \lbrace{ \begin{tabular}{ll}
$i\partial_t v = \sqrt{1-\lap} v + (1-\lap)^{-1/2} \Pi_k P_k (\chi \re(L(t)u_0+v))$ \\
$v_{|t=t_0} = v_0 $\end{tabular}} \right.
$$
in $\mathcal C([-T,T],E_k)$, and this solution satisfies 
$$
\|p_{\pi N_k, 1}(v)\|_{L^\infty([-T,T])} \leq 3\varepsilon \Lambda\; .
$$\end{proposition}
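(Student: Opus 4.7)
The plan is to run a contraction argument on the Duhamel formulation exactly in the spirit of Proposition 4.1, but in the $p_{\pi N_k,1}$ norm (which is a norm on the finite dimensional space $E_k$) and around the drift $t\mapsto L(t-t_0)v_0$ rather than around $0$. Set
$$
A(v)(t) \;=\; L(t-t_0)v_0 \;-\; i\int_{t_0}^t L(t-\tau)(1-\lap)^{-1/2}\Pi_k P_k\bigl(\chi(\re(L(\tau)u_0+v(\tau)))^3\bigr)\,d\tau,
$$
and try to contract $A$ in the ball
$$
B \;=\; \bigl\{\,v\in \mathcal C([t_0-T,t_0+T],E_k)\;:\;\sup_{|t-t_0|\leq T} p_{\pi N_k,1}(v(t)) \leq 3\varepsilon\Lambda\,\bigr\}.
$$

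Three ingredients drive the estimates, all of them elementary. First, $L(t)$ is a Fourier multiplier by unimodular symbols and preserves $E_k$, hence $p_{\pi N_k,1}(L(t-t_0)v_0) \leq p_{\pi N_k,1}(v_0) \leq \varepsilon\Lambda$. Second, since $P_k g$ agrees with $g$ on $[-\pi N_k,\pi N_k)$ and $\Pi_k$ is a Fourier projector on the circle $\R/(2\pi N_k\Z)$, one has $\|\Pi_k P_k g\|_{L^2([-\pi N_k,\pi N_k])} \leq \|g\|_{L^2([-\pi N_k,\pi N_k])}$; combined with the one derivative gained by $(1-\lap)^{-1/2}$ and the unitarity of $L(\cdot)$ on $E_k$, this gives the Duhamel estimate
$$
p_{\pi N_k,1}\!\left(\int_{t_0}^t L(t-\tau)(1-\lap)^{-1/2}\Pi_k P_k g(\tau)\,d\tau\right) \;\lesssim\; \int_{t_0}^t \|g(\tau)\|_{L^2([-\pi N_k,\pi N_k])}\,d\tau.
$$
Third, Sobolev embedding on $\R/(2\pi N_k\Z)$ restricted to $E_k$ gives $\|v\|_{L^\infty} \lesssim p_{\pi N_k,1}(v)$ with a constant independent of $N_k$: this follows from Cauchy--Schwarz applied to $\sum_l |v_l|$ together with $\sum_l (1+l^2/N_k^2)^{-1} \lesssim N_k$, which exactly cancels the $2\pi N_k$ factor coming from $p_{\pi N_k,1}$.

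Splitting $(\re(L(\tau)u_0+v))^3$ into $O(|L(\tau)u_0|^3)+O(|v|^3)$, applying Cauchy--Schwarz in $\tau$ to the first piece (using the hypothesis $\|\chi|L(\tau)u_0|^3\|_{L^2_\tau L^2_x}\leq \Lambda^3$), and Hölder with $\chi\in L^2$ and the Sobolev bound to the second, yields, for $v\in B$,
$$
\sup_{|t-t_0|\leq T} p_{\pi N_k,1}(A(v)(t)) \;\leq\; \varepsilon\Lambda + C_1 T^{1/2}\Lambda^3 + C_2 T(3\varepsilon\Lambda)^3.
$$
With $T=\varepsilon^2/(C\Lambda^4)$ and $\Lambda\geq 1$, each of the last two terms is at most $\varepsilon\Lambda$ provided $C$ is large enough (and $\varepsilon$ not too large; one reduces to $\varepsilon\leq 1$ by the standard rescaling in the parameter), so $A$ maps $B$ into itself. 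The contraction estimate is obtained identically by writing $a^3-b^3 = (a-b)(a^2+ab+b^2)$ and applying Hölder with exponents $(6,6,6)$ as in the proof of Proposition 4.1, giving $\|A(v)-A(w)\|_{L^\infty_T p_{\pi N_k,1}} \leq \tfrac12\|v-w\|_{L^\infty_T p_{\pi N_k,1}}$.

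The only non bookkeeping step is the third ingredient above: one must verify that the Sobolev embedding $p_{\pi N_k,1}\gtrsim \|\cdot\|_{L^\infty}$ on $E_k$ comes with a constant independent of $k$, since the underlying circle has diameter $2\pi N_k\to\infty$. This is the one place where the specific structure of $E_k$ (trigonometric polynomials with frequencies in $[-R_k,R_k[$ on a circle of length $2\pi N_k$) is used; everything else is routine Duhamel. Uniqueness and continuity in $\mathcal C([t_0-T,t_0+T],E_k)$ follow from the Banach fixed-point theorem, and the bound $\|p_{\pi N_k,1}(v)\|_{L^\infty_T}\leq 3\varepsilon\Lambda$ is exactly the radius of the stable ball.
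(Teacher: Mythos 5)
Your proposal is correct and follows essentially the same route as the paper: a contraction argument in the ball of radius $3\varepsilon\Lambda$ of $\mathcal C([-T,T],E_k)$ for the $L^\infty_t,p_{\pi N_k,1}$ norm, resting on the boundedness of $\Pi_k P_k$ on $L^2([-\pi N_k,\pi N_k])$, the compatibility of $L(t-\tau)$ with $p_{\pi N_k,1}$ for $|t-\tau|\leq 1$, and the Sobolev embedding $\|f\|_{L^\infty}\leq Cp_{\pi N_k,1}(f)$ on $E_k$ with a constant uniform in $N_k$ (which you, usefully, actually verify rather than merely assert). The only cosmetic difference is that you exploit the $2\pi N_k$-periodicity of elements of $E_k$ to make $L(t-\tau)$ an exact isometry for $p_{\pi N_k,1}$, where the paper instead uses the cruder bound $p_{\pi N_k,1}(L(t-\tau)f)\leq 2p_{\pi N_k,1}(f)$; both are fine.
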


\begin{proof}In order to generate the same kind of estimate that we found for the equation \eqref{KGcomp}, we repeat the argument. The equation on $v$ can be rewritten in the form : 
$$
v(t) = L(t-t_0) v_0 + A_k(v)(t) =  L(t-t_0)v_0 - i \int_{t_0}^t L(t-\tau) (1-\lap)^{-1/2} P_k \Pi_k \chi_k (\re(L(\tau)u_0 +v(\tau) )^{3}  \; .
$$
We can prove that $L(.-t_0)v_0+A_k$ is contracting on the ball of radius $3\varepsilon \Lambda$ of $\mathcal C [-T,T], E_k$ normed with $L^\infty_t, p_{\pi N_k,1}$. We use the same argument as previously along with some remarks. The first one is that even though the linear flow $L(t)$ shifts the spatial support of the norm, we consider it only for finite time and hence, with $f \in \Hloc$, we have, since $|t-\tau| \leq T \leq 1 \leq \pi N_k$
$$
p_{\pi N_k,1} (L(t-\tau) f ) \leq p_{2\pi N_k, 1} (f) \leq 2 p_{\pi N_k, 1}(f) \; .
$$
The second remark is that $\Pi_k$ satisfies 
$$
p_{\pi N_k,0}(\Pi_k f) \leq p_{\pi N_k, 0}(f)\; .
$$
The third one is that $P_k f$ restricted to $[-\pi N_k, \pi N_k[$ is equal to the restriction of $f$ hence
$$
\| P_k \chi | L(\tau) u_0|^{3}\|_{L^2_\tau,L^{2}([-\pi N_k,\pi N_k])} \leq \|\chi | L(\tau) u_0|^{3}\|_{L^2_\tau,L^{2}([-\pi N_k, \pi N_k])} \leq \Lambda\; .
$$
Finally, we have the Sobolev inequality in $E_k$,
$$
\|f\|_{L^\infty([-\pi N_k,\pi N_k])} \leq C p_{\pi N_k, 1} (f) 
$$
with a constant $C$ independent from $N_k$. (We recall that if $f$ is in $E_k$ then $f$ is $2\pi N_k$ periodic.)

The proof follows the same steps as the one of Proposition \ref{prop-lwp}.\end{proof}

\begin{proposition}\label{prop-gwp} The flow $\psi_k$ is globally well-posed in $L(t) u_0+E_k$ on the support of $\mu$. \end{proposition}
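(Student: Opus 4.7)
The plan is to combine the local well-posedness established in the previous proposition with an a priori bound on $p_{\pi N_k, 1}(v(t))$, where $v(t) := \psi_k(t)u_0 - L(t)u_0 \in E_k$. First, I would observe that for $u_0$ in the support of $\mu$, Proposition \ref{prop-belong} (together with Remark \ref{rem-linfini}) guarantees that $\|\chi|L(\tau)u_0|^3\|_{L^2_\tau([t_0-1,t_0+1], L^2([-\pi N_k,\pi N_k]))}$ is almost surely finite for every $t_0 \in \mathbb{R}$, so the hypothesis of the previous proposition is met on each unit-length time window. It therefore yields, as long as $p_{\pi N_k,1}(v(t_0))$ is controlled, local existence of $v$ on $[t_0-T,t_0+T]$ with $T$ depending only on $\Lambda$ and $\varepsilon$.

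Second, to prevent finite-time blow-up and allow unlimited iteration, I would derive an a priori bound on $p_{\pi N_k, 1}(v(t))$ through the time-dependent energy
$$\mathcal{E}(t) = \frac{1}{2\pi}\int_{-\pi N_k}^{\pi N_k}\overline{v}(1-\lap)v + \frac{1}{4\pi}\int_{-\pi N_k}^{\pi N_k}\chi\,|\re(v+L(t)u_0)|^4.$$
Setting $u = v + L(t)u_0$ and differentiating $\mathcal{E}$ along the flow, the contributions that would vanish if $u$ were an element of $E_k$—i.e.\ the cancellations underlying the Hamiltonian conservation of Proposition \ref{prop-hamil}, which rely on $v\in E_k$ and the self-adjointness of $\Pi_k P_k$—do cancel; what survives are residual driving terms stemming only from the explicit $t$-dependence of $L(t)u_0$. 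These should be controlled in terms of $\mathcal{E}(t)$ itself and moments of $L(t)u_0$ that are almost surely locally integrable in $t$ by Remark \ref{rem-linfini}, in particular $\|L(\tau)u_0\|_{L^\infty([-\pi N_k,\pi N_k])}$.

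Third, a Gronwall-type argument applied to the resulting differential inequality yields a locally uniform bound on $\mathcal{E}(t)$, and hence on $p_{\pi N_k,1}(v(t))$, on any bounded time interval. Combined with the finite time of existence provided by the previous proposition, this bound closes the bootstrap: the unique local solution can be continued indefinitely, and uniqueness is preserved at each step.

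The main obstacle is the energy estimate in the second step. Because $L(t)u_0 \notin E_k$ and is not in $H^1$ on the support of $\mu$, the Hamiltonian $H_k$ itself is not an admissible conserved quantity along the extended flow; the cross terms mixing $v$ and $L(t)u_0$ generate contributions that, if handled naively, require regularity of $u_0$ that we do not have. The careful point is to exploit the decay of $\chi$ (which is in $L^1\cap L^\infty$) and the almost-sure $L^\infty_{\textrm{loc},x}$ control of $L(\tau)u_0$ to express every residual term as a product of quantities available on the support of $\mu$, avoiding any appearance of $(1-\lap)^{1/2}L(t)u_0$ without a compensating weight or regularization from $\chi$ or from $v\in E_k$ itself.
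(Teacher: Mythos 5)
Your overall scheme (local well-posedness on unit time windows via the preceding proposition, an energy functional controlling $p_{\pi N_k,1}(v)$, Gronwall, iteration) is the same as the paper's, but your choice of energy functional creates a genuine gap that the paper's proof is specifically designed to avoid. You take
$$\mathcal E(t) = \frac{1}{2\pi}\int_{-\pi N_k}^{\pi N_k}\overline v(1-\lap)v + \frac{1}{4\pi}\int_{-\pi N_k}^{\pi N_k}\chi\,|\re(v+L(t)u_0)|^4\,,$$
i.e.\ the full Hamiltonian evaluated at $u=v+L(t)u_0$. If you carry out the differentiation honestly, the Hamiltonian cancellations do dispose of every term containing $\partial_t v$, but the quartic term also produces
$$\int_{-\pi N_k}^{\pi N_k}\chi\,(\re u)^3\,\re\big(\partial_t L(t)u_0\big) = \int_{-\pi N_k}^{\pi N_k}\chi\,(\re u)^3\,\mathrm{Im}\big(\sqrt{1-\lap}\,L(t)u_0\big)\,,$$
and this single surviving term is exactly the one you promise to avoid in your last paragraph but cannot, with this energy. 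On the support of $\mu$, $L(t)u_0$ has only $H^{1/2-}_{\textrm{loc}}$ regularity, so $\sqrt{1-\lap}\,L(t)u_0$ is a distribution in $H^{-1/2-}_{\textrm{loc}}$; meanwhile $\chi(\re u)^3$ has no positive Sobolev regularity to pair against it, since the paper assumes no smoothness whatsoever on $\chi$ (only a pointwise decay bound). So this term is not ``controlled by $\mathcal E(t)$ and locally integrable moments of $L(t)u_0$'' --- it is not even well defined, and no weight or $L^\infty_{\textrm{loc}}$ bound from Remark \ref{rem-linfini} helps, because the obstruction is a full derivative landing on the rough free evolution, not a question of integrability.

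The fix, which is what the paper does, is to use the \emph{modified} energy in which the quartic term involves $v$ alone,
$$\mE(v) = \frac12\int_{-\pi N_k}^{\pi N_k}\overline v(1-\lap)v + \frac14\int_{-\pi N_k}^{\pi N_k}\chi|\re v|^4\,,$$
so that the only time derivative ever taken is $\partial_t v$. This energy is not conserved, but the sole surviving term after the cancellations is
$$\re\Big((-i)\int_{-\pi N_k}^{\pi N_k}\sqrt{1-\lap}\,\overline v\;\chi\big((\re(L(t)u_0+v))^3-(\re v)^3\big)\Big)\,,$$
in which the derivative falls on $v\in E_k$ (controlled by the kinetic part of $\mE$ via Cauchy--Schwarz) and $L(t)u_0$ appears only undifferentiated inside a difference of cubes, bounded pointwise by $|L(t)u_0|(|\re v|^2+|L(t)u_0|^2)$ and hence by the a.s.\ finite $L^6_{[-\pi N_k,\pi N_k]}$ norms from Remark \ref{rem-linfini} together with $\mE(v)$ itself (using the finite-dimensionality of $E_k$ to pass from $L^6$ to $L^4$ for the $\chi^{1/2}\re v$ factor). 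With that substitution your Gronwall and continuation argument goes through as you describe; without it, step two of your plan fails.
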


\begin{proof} We use energy estimates. Let 
$$
\mE (c) = \frac12 \int_{-\pi N_k}^{\pi N_k} \overline v (1-\lap) v + \frac1{4}  \int_{-\pi N_k}^{\pi N_k} \chi |\re v|^{4} \; .
$$
Its derivative in time is given by
$$
\partial_t \mE (v) = \re \Big(  \int_{-\pi N_k}^{\pi N_k} \Big( (1-\lap) \overline v + \chi(\re v)^{3}\Big) \partial_t v \Big)
$$
and by inserting the formula for $\partial_t v$,
$$
\partial_t \mE (v) = \re \Big( (-i) \int_{-\pi N_k}^{\pi N_k} \Big( (1-\lap ) \overline v + \chi (\re v)^3\Big)  \Big(\sqrt{1-\lap} v + (1-\lap)^{-1/2}  \Pi_k P_k (\chi (\re L(t)u_0 + v)^3)\Big)\Big) \; .
$$
Because of the real part in this expression, we keep only the following terms
$$
\partial_t \mE (v) = \re \Big( (-i) \int_{-\pi N_k}^{\pi N_k} \sqrt{1-\lap} \overline v \chi \Big( (\re (L(t)u_0 + v))^3 - (\re v)^3 \Big) \; .
$$
Therefore, using Cauchy-Schwartz inequality, 
$$
|\partial_t \mE (v) |\lesssim \left(\int_{-\pi N_k}^{\pi N_k} \overline v (1-\lap )v \right)^{1/2} p_{\pi N_k, 0}\left(\chi \Big( (\re (L(t)u_0 + v))^3 - (\re v)^3 \Big) \right)\; .
$$
First, we remark that
$$
 p_{\pi N_k,0}(\chi \Big( (\re (L(t)u_0 + v))^3 - (\re v)^3 \Big)) \lesssim  p_{\pi N_k, 0}\left(\chi \Big( |L(t)u_0| ( |\re v|^2+ |L(t) u_0|^2)\right)\; .
$$
From the first section, Remark \ref{rem-linfini}, we have that $L(t) u_0$ belongs almost surely to $L^6_{[-\pi N_k, \pi N_k]}$ , hence we get
$$
p_{\pi N_k, 0}\left(\chi \Big( |L(t)u_0| ( |\re v|^2+ |L(t) u_0|^2)\right) \leq \|L(t) u_0\|_{L^6_{[-\pi N_k, \pi N_k]}}^3 + \|L(t) u_0\|_{L^6_{[-\pi N_k, \pi N_k]}} \|\chi^{1/2}\re v\|_{L^6_{[-\pi N_k, \pi N_k]}}^2   \; .
$$
Using that  $\re v$ belongs to $E_k$ which is of finite dimension, we get that 
$$
\| \chi^{1/2} \re v\|_{L^6_{[-\pi N_k, \pi N_k]}} \leq C_k \|\chi^{1/2} \re v\|_{L^4_{[-\pi N_k, \pi N_k]}}\; .
$$
Indeed, we have 
$$
\| \chi^{1/2} \re v\|_{L^p_{[-\pi N_k, \pi N_k]}} = \| P_k \chi^{1/2} \re v\|_{L^p_{[-\pi N_k, \pi N_k]}}
$$
and besides, we can build a set of finite dimension $\widetilde E_k$ in the following way :
$$
P_k \chi^{1/2} \re v \in \widetilde E_k : = \mbox{Vect}\Big(P_k \chi^{1/2} e^{ijx/N_k} \; , \; j= -R_k N_k , \hdots , N_k R_k -1 \Big)
$$
and this space $\widetilde E_k$ is normed both by $L^4([-\pi N_k, \pi N_k])$ and by $L^6([-\pi N_k, \pi N_k])$.

We note that
$$
\int_{-\pi N_k}^{\pi N_k} \overline v (1-\lap )v \mbox{ and }  \|\chi^{1/2} \re v\|_{L^4_{[-\pi N_k, \pi N_k]}}^4 
$$
are less than $\mE(v)$. We get with $\mH (v) = \sqrt{\mE (v)}$,
$$
\partial_t \mH (v) \lesssim \|L(t) u_0\|_{L^6_{[-\pi N_k, \pi N_k]}} \Big( \mH(v) + \| L(t) u_0 \|_{L^6([]-\pi N_k,\pi N_k)}^2\Big) \; .
$$
Using Gronwall lemma, and the fact that the initial datum is $0$, we get
$$
\mH(v) \lesssim \int_{0}^t \|L(\tau) u_0\|_{L^6_{[-\pi N_k, \pi N_k]}}^3 d\tau \; e^{\int_{0}^t  \|L(\tau) u_0\|_{L^6_{[-\pi N_k, \pi N_k]}} d\tau} \; .
$$
Therefore, since for $u_0$ the support of $\mu$, 
$$
\int_{0}^t  \|L(\tau) u_0\|_{L^6_{[-\pi N_k, \pi N_k]}}^3 d\tau \mbox{ and }, \int_{0}^t \|L(\tau)u_0\|_{L^6} d \tau 
$$
are finite for all $t$, $\mH(v)$ remains finite at all time. It controls the $p_{\pi N_k, 1}$ norm of $v$, hence this norm remains finite. What is more, in the support of $\mu$,  the $L^2([t_0-1,t_0+1], L^2_x$ of $\chi (L(\tau) u_0)^3$ is finite and uniformly bounded in $t_0$ in compacts of $\R$. This yields the global well posedness of \eqref{app}. \end{proof}

\subsection{Local uniform convergence of \texorpdfstring{$\psi_k$}{psi k} towards \texorpdfstring{$\psi$}{psi}}

In this subsection, we prove the uniform convergence of $\psi_k$ towards $\psi$.

\begin{proposition}\label{prop-luc}There exists $C$ such that for all $\Lambda \geq 1$, all $q\geq 9$, all $\kappa \in [6,12]$ and $T = \frac{1}{C \Lambda^{4}}$, we have that for all $\varepsilon > 0$, all $R>0$, there exists $k_0$ such that for all $k \geq k_0$, all $|t| \leq T$ and all $u_ 0 \in A$ with
$$
A = \lbrace u \; \Big|  \|\chi^{1/\kappa} |L(t)u|\|_{L^{q}_\tau,L^{q}_x}\leq \Lambda \rbrace \; ,
$$
we have 
$$
\|  (\sqrt{1+x^2})^{-\alpha}D^{3/4}(\psi(t)u_0 - \psi_k(t)u_0)\|_{L^2(\R)} \leq \varepsilon \; .
$$
\end{proposition}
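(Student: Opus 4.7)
The plan is to derive a closed Gronwall-type inequality for
$$Q_k(t)\;:=\;\|(\sqrt{1+x^2})^{-\alpha}D^{3/4}\bigl(\psi(t)u_0-\psi_k(t)u_0\bigr)\|_{L^2(\R)}.$$
On the class $A$ and for $T=1/(C\Lambda^{4})$, Hölder in $(\tau,x)$ (using $\kappa\in[6,12]$, $q\geq 9$, $\chi\in L^\infty\cap L^1$) converts the hypothesis $\|\chi^{1/\kappa}|L(\tau)u_0|\|_{L^q_\tau L^q_x}\leq\Lambda$ into $\|\chi^{1/3}|L(\tau)u_0|\|_{L^6_\tau L^6_x}\lesssim\Lambda$, so Proposition~\ref{prop-lwp} with $\varepsilon=1$, together with the short-time statement for $\psi_k$ given just after \eqref{app}, yields the uniform a priori bounds
$$\|\psi(t)u_0-L(t)u_0\|_{H^1}+p_{\pi N_k,1}\bigl(\psi_k(t)u_0-L(t)u_0\bigr)\;\lesssim\;\Lambda\qquad\text{on }[-T,T].$$
Setting $v=\psi(t)u_0-L(t)u_0$, $v_k=\psi_k(t)u_0-L(t)u_0$, $w=v-v_k$, the Duhamel formula reads
$$w(t)=-i\int_0^t L(t-\tau)(1-\lap)^{-1/2}\Big[\chi\bigl((\re(Lu_0+v))^3-(\re(Lu_0+v_k))^3\bigr)+(I-\Pi_kP_k)\chi(\re(Lu_0+v_k))^3\Big]d\tau.$$

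Since $L(t-\tau)$ is an $L^2$-isometry that commutes with $D^{3/4}$ and $D^{3/4}(1-\lap)^{-1/2}=(1-\lap)^{-1/8}$ is bounded on $L^2$, the weighted $L^2$ norm of the Duhamel integral is controlled by the time-integral of the weighted $L^2$ norm of the bracket (the commutator of $D^{3/4}$ with the smooth weight $(\sqrt{1+x^2})^{-\alpha}$ being of lower order and absorbed by standard fractional Leibniz / Kato--Ponce estimates). For the Lipschitz piece, the identity $a^3-b^3=(a-b)(a^2+ab+b^2)$ with $a=\re(Lu_0+v)$, $b=\re(Lu_0+v_k)$ gives the pointwise bound
$$\chi\bigl|(\re(Lu_0+v))^3-(\re(Lu_0+v_k))^3\bigr|\;\lesssim\;\chi\,|\re w|\,\bigl(|L(\tau)u_0|^2+|v|^2+|v_k|^2\bigr),$$
and the $1$-d Sobolev embedding $H^1\hookrightarrow L^\infty$, applied to $v$ and to the $2\pi N_k$-periodic $v_k$, bounds the last two terms by $\Lambda^2$. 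The cross term with $|L(\tau)u_0|^2$ is absorbed by Hölder in $x$ and the definition of $A$, redistributing powers of $\chi^{1/\kappa}$ through $\chi\lesssim(\sqrt{1+x^2})^{-3\alpha}$ so that the surplus weight $\chi^{1-2/\kappa}$ combines with $(\sqrt{1+x^2})^{-2\alpha}$ and is integrable. The resulting factor is time-integrable and, after using that $(\sqrt{1+x^2})^{-\alpha}\re w$ is dominated by $Q_k(\tau)$ up to $D^{-3/4}$-smoothing, produces a contribution $\lesssim\Lambda^2\int_0^tQ_k(\tau)\,d\tau$.

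For the truncation term $(I-\Pi_kP_k)\chi(\re(Lu_0+v_k))^3$ I would use that $P_k$ acts as the identity on $[-\pi N_k,\pi N_k)$ and that $\Pi_k$ is the Fourier cut-off at radius $R_k\to\infty$; combined with the decay $\chi(x)\lesssim(\sqrt{1+x^2})^{-3\alpha}$ with $\alpha>1$ and the uniform bounds on $v_k$ and on $L(\tau)u_0$ provided by $A$, the weighted $L^2$ norm of the error is shown to tend to $0$ as $k\to\infty$, uniformly in $u_0\in A$ and in $\tau\in[-T,T]$. Putting everything together yields
$$Q_k(t)\;\leq\;C\Lambda^{2}\int_0^t Q_k(\tau)\,d\tau+\delta_k,\qquad \delta_k\xrightarrow[k\to\infty]{}0,$$
and Gronwall on $[-T,T]$ closes the proof. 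The main obstacle I expect is precisely the \emph{uniformity} over the non-compact class $A$ of the truncation estimate: unlike the Lipschitz piece, where pointwise cubic control is enough, the piece $(I-\Pi_kP_k)\chi G_k$ must be made simultaneously small in the weighted $L^2$ norm at $D^{3/4}$ regularity, uniformly in an infinite family, and it is here that the coupled constraints $q\geq 9$, $\kappa\in[6,12]$ and $\alpha>1$ interlock so as to convert the integral control of $\chi^{1/\kappa}L(\tau)u_0$ on $A$ into a uniform smallness of the high-frequency and far-field contributions.
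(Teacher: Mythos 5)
Your overall architecture matches the paper's: Duhamel for $w=\psi(t)u_0-\psi_k(t)u_0$, a splitting into a cubic-difference (Lipschitz) piece plus a truncation-error piece, the a priori $H^1$ and $p_{\pi N_k,1}$ bounds of size $\Lambda$ obtained by H\"older from membership in $A$, and closure by absorbing a factor $T^{1/2}\Lambda^2$ (your Gronwall variant is equivalent to the paper's direct absorption). The Lipschitz piece is handled essentially as in the paper.

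The genuine gap is exactly where you flag "the main obstacle": you assert that the weighted $L^2$ norm of $(I-\Pi_kP_k)\chi(\re(Lu_0+v_k))^3$ tends to $0$ uniformly over $A$, but give no mechanism, and the naive one fails. For a \emph{fixed} $g\in L^2$ one has $(1-\Pi_k)g\to 0$, but this is not uniform over a set that is merely bounded in $L^2$, and boundedness is all that the a priori estimates give you for $\chi(\re\psi u_0)^3$; so uniformity in $u_0\in A$ cannot be obtained this way. The paper's resolution has two ingredients you are missing. First, a preliminary lemma replaces the weighted norm over $\R$ by $C(R)\Lambda+p_{R,3/4}(w)$ with $C(R)\to0$, using the $H^1$ bound on $\psi-L$ and the $2\pi N_k$-periodicity of $\psi_k-L$; one then takes $R=r_k=\pi N_k(1-1/k)$, so that the $(1-P_k)$ error is invisible on the region $|x|\le r_k+1<\pi N_k$ (this also sidesteps your weight/$L(t-\tau)$ and weight/$D^{3/4}$ commutator issues, which you dismiss too quickly: $L(t-\tau)$ does not commute with $(\sqrt{1+x^2})^{-\alpha}$, and the paper uses finite propagation speed, $p_{R,s}(L(t)u)\le p_{R+|t|,s}(u)$, instead). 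Second, and crucially, the $(1-\Pi_k)$ error is measured in $H^{-1/4}$: since the Duhamel operator $(1-\lap)^{-1/2}$ gains a full derivative and only $D^{3/4}$ is needed, one may pay $1/4$ of a derivative to write $p_{r_k+1,-1/4}((1-\Pi_k)F)\le R_k^{-1/4}\|F\|_{L^2}$, which converts the mere $L^2$ bound $\|\chi\re(\psi u_0)^3\|_{L^2}\lesssim\Lambda^3$ into the quantitative, $u_0$-uniform smallness $R_k^{-1/4}T^{1/2}\Lambda^3$. Without this negative-regularity trade (or some substitute), your $\delta_k\to0$ claim is unsupported and the proof does not close.
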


Remark that even though $\psi(t) u_0$ is not in $H^{3/4}$ with a weight, $\psi(t)u_0-L(t) u_0$ is and so is $\psi_k(t)u_0 - L(t)u_0$.

\begin{lemma} For all $R \geq 1$ and all $u_0 \in A$ and $|t|\leq T$, we have 
$$
\|  (\sqrt{1+x^2})^{-\alpha}D^{3/4}(\psi(t)u_0 - \psi_k(t)u_0)\|_{L^2(\R)} \leq C(R) \Lambda +  p_{R,3/4}(\psi(t)u_0 - \psi_k(t)u_0)
$$
Where $C(R)$ goes to $0$ when $R$ goes to $\infty$. \end{lemma}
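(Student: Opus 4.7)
The plan is to split the $L^2(\R)$--integration at $|x|=R$ and estimate each piece separately. On $[-R,R]$ the weight satisfies $(\sqrt{1+x^2})^{-\alpha}\le 1$, so this contribution is at most $p_{R,3/4}(\psi(t)u_0-\psi_k(t)u_0)$, which is exactly the second summand of the claimed inequality.

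For the tail $|x|>R$ I would use the triangle inequality together with the decomposition
\[
\psi(t)u_0-\psi_k(t)u_0 \;=\; \bigl(\psi(t)u_0-L(t)u_0\bigr) - \bigl(\psi_k(t)u_0-L(t)u_0\bigr),
\]
since each summand belongs to a nicer space than the endpoints themselves. Proposition \ref{prop-lwp} (whose $L^6_\tau L^6_x$ hypothesis is obtained from $u_0\in A$ by H\"older, using that appropriate powers of $\chi$ are integrable) yields $\|\psi(t)u_0-L(t)u_0\|_{H^1(\R)}\lesssim\Lambda$ on $|t|\le T$, hence $D^{3/4}(\psi(t)u_0-L(t)u_0)\in L^2(\R)$ with norm $\lesssim\Lambda$. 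Since $(1+x^2)^{-\alpha}\le(1+R^2)^{-\alpha}$ for $|x|>R$, this first summand contributes at most $(1+R^2)^{-\alpha/2}\Lambda\lesssim R^{-\alpha}\Lambda$ to the left-hand side.

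The second summand $g:=\psi_k(t)u_0-L(t)u_0$ lies in $E_k$ and is $2\pi N_k$--periodic, so $\|D^{3/4}g\|_{L^2(\R)}=\infty$ and the previous trick fails. Instead I would use the extension estimates (Proposition \ref{prop-gwp} and its short-time counterpart) to get $p_{\pi N_k,1}(g)\lesssim\Lambda$, hence $p_{\pi N_k,3/4}(g)\lesssim\Lambda$, and then tile $\R$ by translates of the period $[-\pi N_k,\pi N_k)$. By periodicity,
\[
\int_{|x|>R}(1+x^2)^{-\alpha}|D^{3/4}g(x)|^2\,dx = \int_{-\pi N_k}^{\pi N_k}|D^{3/4}g(y)|^2\,\sigma_R(y)\,dy,
\]
with $\sigma_R(y):=\sum_{n\in\Z}(1+(y+2\pi N_k n)^2)^{-\alpha}\,1_{|y+2\pi N_k n|>R}$. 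Because $2\alpha>1$ the full series $\sum_n(1+(y+2\pi N_k n)^2)^{-\alpha}$ converges uniformly in $y\in[-\pi N_k,\pi N_k)$, so $\sigma_R(y)\to 0$ as $R\to\infty$; by dominated convergence the tail integral is bounded by $c_k(R)\Lambda^2$ with $c_k(R)\to 0$. Setting $C(R):=R^{-\alpha}+c_k(R)^{1/2}$ (possibly accepting $k$--dependence, which is harmless since Proposition \ref{prop-luc} picks $k_0=k_0(R,\varepsilon)$ after $R$) completes the proof.

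The main technical point is the second summand: the periodicity of $g$ blocks a direct $L^2(\R)$ Sobolev bound, so one must exploit the $L^1$--integrability of the weight $(1+x^2)^{-\alpha}$ against the periodic square $|D^{3/4}g|^2$ and reduce everything to a single period via the sum $\sigma_R$.
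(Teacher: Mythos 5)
Your proposal is correct and follows essentially the same route as the paper: split at $|x|=R$, write $\psi(t)u_0-\psi_k(t)u_0$ as the difference of $\psi(t)u_0-L(t)u_0$ (controlled in $H^1(\R)$ by Proposition \ref{prop-lwp}, then damped by the weight on $|x|>R$) and the $2\pi N_k$--periodic function $\psi_k(t)u_0-L(t)u_0$ (handled by tiling the tail region by periods and summing the weight, using $\alpha>1$ for convergence). Your remark that the resulting constant may depend on $k$ matches the paper, whose explicit bound $(\sqrt{1+R^2})^{1-\alpha}N_k^{-1}\Lambda$ also carries an $N_k$ factor and is harmless in the subsequent application with $R=r_k$.
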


\begin{proof}We look at what happens for $|x|\geq R$. Then,
$$
\| 1_{|x|\geq R} (\sqrt{1+x^2})^{-\alpha}D^{3/4}(\psi(t)u_0 - L(t)u_0)\|_{L^2} \leq (\sqrt{1+R^2})^{-\alpha} \|\psi(t)u_0 - L(t)u_0\|_{H^1} \lesssim \sqrt{1+R^2}^{-\alpha} \Lambda
$$
for the desired times.

Then, we divide $\{|x|\geq R\}$ in $\bigcup_n [R+n2\pi N_k, R+(n+1)2\pi N_k] \cup [-R-(n+1)2\pi N_k, -R-n2\pi N_k]$ and we use the periodicity of $\psi_k(t)u_0 - L(t)u_0$ to get
$$
\| 1_{|x|\geq R} (\sqrt{1+x^2})^{-\alpha}D^{3/4}(\psi_k(t)u_0 - L(t)u_0)\|_{L^2} \lesssim \sum_n  (\sqrt{1+(Rn2\pi N_k)^2})^{-\alpha} p_{\pi N_k,3/4}(\psi_k(t)u_0 - L(t)u_0)
$$
and hence
$$
\| 1_{|x|\geq R} (\sqrt{1+x^2})^{-\alpha}D^{3/4}(\psi_k(t)u_0 - L(t)u_0)\|_{L^2} \lesssim (\sqrt{1+R^2})^{1-\alpha} N_k^{-1} \Lambda
$$
therefore, we get the result. \end{proof}

\begin{proof}[Proof of Proposition \ref{prop-luc}] Let $u_0 \in A$. First, since 
$$
\|\chi^{1/3} |L(\tau)u_0| \|_{L^6_\tau, L^6_x} \leq \|\chi^{1/3-1/\kappa}\|_{L^{6q/(q-6)}}^{1/6}\|\chi^{1/\kappa} |L(t)u|\|_{L^{q}_\tau,L^{q}_x}\leq C\Lambda \; ,
$$
and since $ \|\chi^{1/3-1/\kappa}\|_{L^{q/(q-6)}}$ is less than $\max_{\kappa \in [6,12]}M^{1/3-1/\kappa}$, with $M$ the maximum of $\|\chi\|_{L^1}$ and $\|\chi\|_{L^\infty}$ (making the constant independent of $q$ and $\kappa$), the flow $\psi$ is well-defined for the considered times and 
$$
\|\psi(t)u_0 - L(t)u_0\|_{L^\infty([-T,T],H^1)} \leq \Lambda\; , \|p_{\pi N_k, 1}(\psi_k(t)u_0 - L(t)u_0)\|_{L^\infty[-T,T]} \leq \Lambda\; .
$$
Let $w= \psi(t)u_0 - \psi_k(t)u_0$. This function satisfies
$$
i\partial_t w - \sqrt{1 - \lap} w  = (1- \lap)^{-1} \Big( \chi \re(\psi(t)u_0)^3- \Pi_k P_k \chi \re (\psi_k(t) u_0)^3 \Big) \; .
$$
We can rewrite this equation as 
$$
w(t) = -i \int_{0}^t L(t-\tau) (1-\lap)^{-1/2} \Big( \chi  \re(\psi(\tau)u_0)^3 -  \Pi_k P_k \chi \re (\psi_k(\tau) u_0)^3 \Big)d\tau
$$
or 
$$
w(t) = -i\int_{0}^t L(t-\tau) (1- \lap)^{-1/2} (I + II +III )d\tau
$$
with
\begin{eqnarray*} 
I & = & (1- \Pi_k) \chi  \re(\psi(\tau)u_0)^3 \\
II &= & \Pi_k (1-P_k) \chi  \re(\psi(\tau)u_0)^3 \\
III & = & \Pi_k P_k \chi \Big(   \re(\psi(\tau)u_0)^3 - \re(\psi_k (\tau)u_0)^3\Big)
\end{eqnarray*}

We apply the previous lemma with $R= r_k = \pi N_k (1-\frac1k)$. We have that
$$
\|  (\sqrt{1+x^2})^{-\alpha}D^{3/4}(\psi(t)u_0 - \psi_k(t)u_0)\|_{L^2(\R)} \leq C(r_k) \Lambda + p_{r_k,3/4} (w)
$$
for the considered times.

Let us estimate the second part of the right hand side. We have 
$$
p_{r_k, 3/4} (w) \leq \int_{0}^t \Big(  p_{r_k + 1, -1/4} (I)  +  p_{r_k + 1, -1/4} (II) +  p_{r_k+ 1, -1/4} (III) \Big) d\tau\; .
$$
For $I$, we use that
$$
p_{r_k+ 1, -1/4} (I)\leq \|I\|_{H^{-1/4}} \leq R_k^{-1/4} \|\chi  \re(\psi(\tau)u_0)^3\|_{L^2}
$$
hence 
$$
\int_{0}^t \Big(  p_{r_k+ 1, -1/4} (I)d\tau  \lesssim T^{1/2} \Lambda^3 R_k^{-1/4} \; .
$$
Indeed, we have 
$$
\|\chi  \re(\psi(\tau)u_0)^3\|_{L^2} = \|\chi^{1/3}  \re(\psi(\tau)u_0)\|_{L^6}^3 
$$
and as $\psi(t) u_0 = L(t) u_0 + (\psi(t) u_0 - L(t)u_0)$ , we get
$$
\|\chi  \re(\psi(\tau)u_0)^3\|_{L^2} \leq  \|\chi^{1/3}  L(\tau)u_0\|_{L^{6}}^3 + \|\chi^{1/3}  (\psi(t)u_0 - L(\tau)u_0)\|_{L^{6}}^3\leq C \Lambda \; ,
$$
with a constant $C$ that depends only on $\chi$ (and not on $q$ or $\kappa$).
The $L^\infty_t , H^1_x$ norm of $\psi(t) u_0 - L(t) u_0$ is bounded by $\Lambda$ thanks to Proposition \ref{prop-lwp}. Therefore, we get
$$
\int_{0}^t \Big(  p_{r_k+ 1, -1/4} (I)d\tau  \lesssim R_k^{-1/4}\Big( T^{1-3/q} \|\chi^{1/\kappa}  L(\tau)u_0\|_{L^q_\tau, L^q_x}^3 + T \Lambda^3 \Big) \leq R_k^{-1/4} T^{1/2} \Lambda^3 \; .
$$

For $II$, we use that $r_k + 1 = \pi N_k (1 - \frac1k + \frac1{\pi N_k})$ that $\pi N_k = \pi 2^k > k$ and that for $x\in [-\pi N_k, \pi N_k]$, $P_k f (x) = f(x)$ to get
$$
  p_{r_k + 1, -1/4} (II) =0 \; .
$$

For $III$, we use that $III$ is $2\pi N_k$ periodic, thus its $p_{r_k+1, -1/4}$ norm is less than its $p_{\pi N_k,-1/4}$ norm, and then, it is less than
$$
\int_{0}^t p_{r_k+ 1, -1/4} (III) d\tau \leq C T^{1/2} \Lambda^{2} \|\chi^{1/3} w\|_{L^\infty([-\pi N_k, \pi \N_k]}) \; .
$$
We recall from the assumptions on $\chi$ that $0 \leq \chi \leq C  (\sqrt{1+x^2})^{-3 \alpha}$. Since $3/4 > 1/2$, we have the Sobolev embedding $L^\infty \subseteq H^{3/4}$, hence
$$
\int_{0}^t p_{\pi N_k + 1, -\mu} (III) d\tau \leq C T^{1/2} \Lambda^{2} \| (\sqrt{1+x^2})^{-\alpha}D^{3/4}w\|_{L^\infty([-T,T],L^2)} \; .
$$

Finally, for $T = \frac1{C\Lambda^{4}}$ with $C$ big enough, we have
$$
\| (\sqrt{1+x^2})^{-\alpha}D^{3/4}w\|_{L^\infty([-T,T],L^2)} \lesssim \Lambda^3 R_k^{-1/4} + C(r_k)
$$
and since $r_k$ goes to $\infty$ and $R_k^{-1/4}$ go to $0$ when $k \rightarrow \infty$, there exists $k_0$ (depending on $\Lambda$, $q$) such that
$$
\| (\sqrt{1+x^2})^{-\alpha}D^{3/4}w\|_{L^\infty([-T,T],L^2)}\leq \varepsilon \; .
$$
\end{proof}

\begin{corollary} We fix $\Lambda \geq 1$. For all time $t$ such that $|t|\leq \frac1{C\Lambda^4}$, the sequence $\psi_k(t)$ converges uniformly in $A$ in $\Hloc$. \end{corollary}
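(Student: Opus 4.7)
The plan is to transfer the weighted $H^{3/4}$-type convergence of Proposition \ref{prop-luc} into convergence in the metric $d$ defining the topology of $\Hloc$. Set $w_k = \psi(t) u_0 - \psi_k(t) u_0$ and fix $\eta > 0$. Since each summand in the definition of $d(\psi(t) u_0, \psi_k(t) u_0)$ is bounded by $2^{-(R+l)}$, one first truncates by choosing $R_0, l_0 \in \N^*$ so that $\sum_{R > R_0 \text{ or } l > l_0} 2^{-(R+l)} \leq \eta/2$. The problem then reduces to showing that, for each of the finitely many pairs $(R, l)$ with $R \leq R_0$, $l \leq l_0$, the semi-norm $p_{R, 1/2-1/l}(w_k)$ tends to $0$ uniformly in $u_0 \in A$ as $k \to \infty$.

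The key step is to bound $p_{R, s}(w_k)$ for $s = 1/2 - 1/l < 3/4$ by a constant times $\|(\sqrt{1+x^2})^{-\alpha} D^{3/4} w_k\|_{L^2(\R)}$, which is controlled by Proposition \ref{prop-luc}. For this I would write $D^s = \mathcal{G}_{3/4-s} * D^{3/4}$, where $\mathcal{G}_\sigma$ denotes the positive $L^1(\R)$ Bessel potential kernel of order $\sigma > 0$, of integral one and with exponential decay at infinity. Since $D^{3/4} w_k$ is only \emph{locally} in $L^2$---the component $\psi_k(t) u_0 - L(t) u_0$ being $2\pi N_k$-periodic while $\psi(t) u_0 - L(t) u_0 \in H^1(\R)$---I would split $D^{3/4} w_k = f_1 + f_2$ with $f_1 = 1_{[-3R,3R]} D^{3/4} w_k$. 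The near part satisfies $\|f_1\|_{L^2(\R)} \leq (1+9R^2)^{\alpha/2} \|(\sqrt{1+x^2})^{-\alpha} D^{3/4} w_k\|_{L^2(\R)}$, so Young's inequality together with $\|\mathcal{G}_{3/4-s}\|_{L^1} = 1$ controls $\mathcal{G}_{3/4-s} * f_1$ in $L^2(\R)$. For the far part, observe that for $x \in [-R, R]$ and $|y| > 3R$ one has $|x - y| > 2R$, so the polynomial weight $(1+y^2)^\alpha$ is absorbed against the exponential decay of $\mathcal{G}_{3/4-s}$ via Cauchy-Schwarz. Combining the two estimates yields $p_{R, s}(w_k) \leq C_{R_0, l_0, \alpha} \|(\sqrt{1+x^2})^{-\alpha} D^{3/4} w_k\|_{L^2(\R)}$ uniformly over the finite set of indices.

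To conclude, I would apply Proposition \ref{prop-luc} with $\varepsilon$ small enough (depending on $R_0$, $l_0$, $\alpha$, $\eta$) that each of the finitely many remaining summands contributes at most $\eta/(2 R_0 l_0)$, yielding $d(\psi(t) u_0, \psi_k(t) u_0) \leq \eta$ for $k \geq k_0$, $u_0 \in A$, and $|t| \leq 1/(C\Lambda^4)$. The only subtle point is the Bessel kernel analysis: a naive global Cauchy-Schwarz on $\mathcal{G}_{3/4-s}^2(1+y^2)^\alpha$ would fail once $3/4 - s \leq 1/2$ (i.e.\ once $l \geq 4$), since $\mathcal{G}_\sigma$ is not in $L^2$ near the origin for $\sigma \leq 1/2$; the near/far splitting circumvents this by using Young only on the compactly supported piece and applying Cauchy-Schwarz away from the diagonal $x = y$.
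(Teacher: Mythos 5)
Your proof is correct and follows the same route as the paper's: reduce the metric $d$ to finitely many semi-norms and use the fact that the weighted $D^{3/4}$ norm controlled in Proposition \ref{prop-luc} dominates each $p_{R,s}$ with $s<1/2$. The paper asserts this domination in a single sentence without justification; your near/far Bessel-kernel argument (Young's inequality on the compactly supported piece, Cauchy--Schwarz against the exponential decay off the diagonal) is a valid proof of that asserted control.
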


\begin{proof} The $\| (\sqrt{1+x^2})^{-\alpha}D^{3/4} \cdot \|_{L^2}$ norm controls the $p_{R,s}$ semi-norm for all $s<1/2$, hence the uniform convergence for all the $p_{R,3/4}$ implies the uniform convergence in $\Hloc$. Besides, $L(t)$ is uniformly continuous for the topology of $\Hloc$. \end{proof}

\subsection{Local continuity of the flows}

In this subsection, we prove the continuity of the flows $\psi$ and $\psi_k$ with respect to the initial datum in the support of $\mu$.

\begin{lemma}\label{lem-PQ} Let $q> p \geq 2$. Let $\kappa_1 > \kappa_2 \geq 6$ such that $p\geq \kappa_1$, $q\geq \kappa_2$ and $1/\kappa_2 - 1/\kappa_1 \geq 1/p - 1/q$. The map
$$
u_0 \mapsto \chi^{1/\kappa_2} L(\tau) u_0
$$
is uniformly continuous from $\Hloc$ to $L^p_\tau, L^p_x$ on the sets $\{u\in \Hloc \; |\; \|\chi^{1/\kappa_1}L(\tau) u\|_{L^q_\tau,L^q_x} \leq \Lambda\}$.
\end{lemma}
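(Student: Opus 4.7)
The plan is to decompose the integration in $x$ at a scale $R$ and treat the tail $|x| > R$ and the compact part $|x| \leq R$ by separate arguments: Hölder interpolation against the $L^q$ hypothesis for the tail, and Sobolev embedding combined with the finite propagation speed of $L(\tau)$ for the compact part.

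For the tail I factor $\chi^{1/\kappa_2} = \chi^{1/\kappa_2 - 1/\kappa_1} \cdot \chi^{1/\kappa_1}$ and apply Hölder in $x$ with the conjugate exponents $r = pq/(q-p)$ and $q$ (since $1/p = 1/r + 1/q$), together with the continuous embedding $L^q_\tau([-1,1]) \hookrightarrow L^p_\tau([-1,1])$ coming from $q \geq p$. For any $u,v$ in the set this gives
\begin{equation*}
\|\mathbf 1_{|x|>R}\chi^{1/\kappa_2} L(\tau)(u-v)\|_{L^p_\tau L^p_x} \lesssim \|\mathbf 1_{|x|>R}\chi^{1/\kappa_2 - 1/\kappa_1}\|_{L^r_x}\, \|\chi^{1/\kappa_1} L(\tau)(u-v)\|_{L^q_\tau L^q_x},
\end{equation*}
with the last factor at most $2\Lambda$ by the triangle inequality. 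The hypothesis $1/\kappa_2 - 1/\kappa_1 \geq 1/p - 1/q = 1/r$ combined with the decay $\chi \lesssim (1+x^2)^{-3\alpha/2}$ (with $\alpha > 1$) makes $\chi^{(1/\kappa_2 - 1/\kappa_1)r}$ integrable, so the tail factor tends to $0$ as $R\to\infty$ by dominated convergence, uniformly in $u,v$ in the set.

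For the compact part I use $\chi \leq \|\chi\|_\infty$ and then a one-dimensional localized Sobolev embedding: picking an integer $l > p$ and setting $s = 1/2 - 1/l$, a standard cutoff-and-commutator argument (licit because $s < 1/2$) yields $\|L(\tau)(u-v)\|_{L^p([-R,R])} \lesssim_R p_{R+2, s}(L(\tau)(u-v))$. Combining with the finite propagation speed estimate for $L(\tau)$ already used in the lemma preceding Theorem \ref{th-lininv}, namely $p_{R+2, s}(L(\tau) f) \leq p_{R+3, s}(f)$ for $|\tau|\leq 1$, and integrating in $\tau$ over $[-1,1]$, the compact contribution is at most $C_R\, p_{R+3, s}(u-v)$.

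Putting the two pieces together: given $\varepsilon > 0$, first pick $R$ so large that the tail contribution is below $\varepsilon/2$ for all admissible $u,v$, then pick $\delta > 0$ small enough that $d(u,v) < \delta$ forces $p_{R+3, s}(u-v) < \varepsilon/(2C_R)$; this is possible because, for the integer $k = R+3$ and the chosen $l$, the term $p_{k, 1/2-1/l}(u-v)/(1 + p_{k, 1/2-1/l}(u-v))$ appears in the series defining $d$ with weight $2^{-(k+l)}$, so $d(u,v) < \delta$ gives arbitrarily small control on $p_{k, 1/2-1/l}(u-v)$ once $\delta$ is small enough. The delicate point is to choose the exponents so the Hölder inequality is licit and the weight $\chi^{1/\kappa_2 - 1/\kappa_1}$ is integrable to power $r$; this is exactly what the assumption $1/\kappa_2 - 1/\kappa_1 \geq 1/p - 1/q$ provides, together with $\alpha > 1$.
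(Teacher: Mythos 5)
Your proof is correct and follows essentially the same route as the paper: the same splitting into $|x|\leq R$ and $|x|>R$, the same H\"older inequality with $1/p = 1/q + 1/r$ and the integrability of $\chi^{(1/\kappa_2-1/\kappa_1)r}$ for the tail, and the same Sobolev embedding plus finite-propagation-speed bound $p_{R',s}(L(\tau)f)\leq p_{R'+|\tau|,s}(f)$ for the compact part, concluded via the semi-norms appearing in the definition of $d$. Your choice $s=1/2-1/l$ with integer $l>p$ is in fact slightly cleaner than the paper's $s=1/2-1/p$, since it matches exactly the semi-norms used in $d$.
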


\begin{proof}Let $u_1$ and $u_2$ such that 
$$
 \|\chi^{1/\kappa_1}L(\tau) u_1\|_{L^q_\tau, L^q_x} \leq \Lambda \mbox{ and }  \|\chi^{1/\kappa_1}L(\tau) u_2\|_{L^q_\tau,L^q_x} \leq \Lambda \; .
$$
We recall that 
$$
d(u_1,u_2) = \sum_{l,k} 2^{-(l+k)}\frac{p_{k, 1/2-1/l}(u_1-u_2)}{1+p_{k, 1/2-1/l}(u_1-u_2) } \; .
$$
We compare $\chi^{1/\kappa_2} L(\tau) u_1$ and $\chi^{1/\kappa_2} L(\tau) u_2$, we have 
$$
\|\chi^{1/\kappa_2} L(\tau) u_1 - \chi^{1/\kappa_2} L(\tau) u_2\|_{L^p_x}^p = \int_{-R}^R \chi^{p/\kappa_2} |L(\tau) (u_1-u_2)|^p + \int_{|x|\geq R} \chi^{p/\kappa_2} |L(\tau) (u_1 - u_2)|^p dx \; .
$$
Since $\chi$ is bounded and $L^p$ is embedded in $H^{s}$ with $s=1/2 - 1/p$, we get thanks to H\"older inequalities with $1/p = 1/q + 1/r$ and $\alpha = 1/\kappa_2 - 1/\kappa_1$ such that $\alpha r \geq 1$ : 
\begin{multline*}
\|\chi^{1/\kappa_2} L(\tau) u_1 - \chi^{1/\kappa_2} L(\tau) u_2\|_{L^p_x} \leq  \|\chi^{1/\kappa_2}\|_{L^\infty([-R,R])} \|L(\tau)(u_1-u_2)\|_{L^p([-R,R])} + \\
\|\chi^{\alpha}\|_{L^r(\{|x|>R\}} \|\chi^{1/\kappa_1} L(\tau) (u_1-u_2)\|_{L^q}\\
\lesssim  \|\chi\|_{L^\infty}^{1/\kappa_2} p_{R',s}(L(\tau) (u_1-u_2) + \left( \int_{|x|\geq R} \chi^{r\alpha} \right)^{1/r} \left( \int \chi^{q/\kappa_1} |L(\tau)(u_1-u_2)|^q\right)^{1/q} \; ,
\end{multline*}
where $R' > R$ to compensate the boundary effects.

We use the definition of $d$ to get
\begin{eqnarray*}
\|\chi^{1/\kappa_2} L(\tau) u_1 - \chi^{1/\kappa_2} L(\tau) u_2\|_{L^3_x}  &\lesssim & \|\chi\|_{L^\infty}^{1/\kappa_2} 2^{R'+|\tau|+s}d(u_1,u_2) + \\
& & \left( \int_{|x|\geq R} \chi^{\alpha} \right)^{1/r}\Big( \|\chi^{1/\kappa_1} L(\tau) u_1 \|_{L^q} + \|\chi^{1/\kappa_1} L(\tau) u_2 \|_{L^q} \Big) \; .
\end{eqnarray*}
We integrate over $\tau \in [-1,1]$. We get
$$ 
\|\chi^{1/\kappa_2} L(\tau) u_1 - \chi^{1/\kappa_2} L(\tau) u_2\|_{L^p_\tau,L^p_x} \lesssim \|\chi\|_{L^\infty}^{1/\kappa_2} 2^{R'+1+s} d(u_1,u_2) + \left( \int_{|x|\geq R} \chi^{r\alpha}\right)^{1/r} \Lambda \; .
$$
Let $\varepsilon > 0$, we take $R$ big enough such that $ \left( \int_{|x|\geq R} \chi^{r\alpha}\right)^{1/r} \leq \varepsilon$. For all $u_1$ and $u_2$ such that $d(u_1,u_2) \leq 2^{-(1+R'+s)}\varepsilon $, we have 
$$ 
\|\chi^{1/\kappa_2} L(\tau) u_1 - \chi^{1/\kappa_2} L(\tau) u_2\|_{L^p_\tau,L^p_x} \lesssim \varepsilon
$$
which concludes the proof.\end{proof}

\begin{proposition}\label{prop-contpsi} Let $q\geq 9$ and $\kappa \in [6,12]$ such that $1/6\geq 1/\kappa -1/q$ and $q\geq \kappa$. The flow $\psi(t)$ is uniformly continuous from $\Hloc $ to $\Hloc$ in 
$$
\{u\in \Hloc \; |\; \|\chi^{1/\kappa}L(\tau) u\|_{L^q_\tau,L^q_x} \leq \Lambda\}
$$ 
for all time $|t| \leq T = \frac1{C\Lambda^4}$ with $C$ a constant independent from $q$ and $\kappa$. \end{proposition}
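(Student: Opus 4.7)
The approach mirrors the proof of the local well-posedness Proposition \ref{prop-lwp}, but applied to the difference of two solutions, together with Lemma \ref{lem-PQ} to extract continuous dependence on the initial datum from the $\Hloc$-distance. Set $v_j(t) = \psi(t)u_j - L(t)u_j$ for $j=1,2$. Under the hypothesis $\|\chi^{1/\kappa}L(\tau)u_j\|_{L^q_\tau,L^q_x}\leq \Lambda$, Hölder in $(\tau,x)$ together with $\chi \in L^1 \cap L^\infty$ gives $\|\chi^{1/3}L(\tau)u_j\|_{L^6_\tau,L^6_x}\lesssim \Lambda$, so Proposition \ref{prop-lwp} (with $\varepsilon = 1$) yields $v_j \in \mathcal C([-T,T],H^1)$ with $\|v_j\|_{L^\infty_t H^1_x}\lesssim \Lambda$ for $T = 1/(C\Lambda^4)$.

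I then write the Duhamel formula for $w := v_1 - v_2$ and estimate its $H^1$ norm by factoring $a^3 - b^3 = (a-b)(a^2+ab+b^2)$ and applying Hölder in space,
\begin{equation*}
\|\chi((\re \psi(\tau)u_1)^3 - (\re \psi(\tau)u_2)^3)\|_{L^2_x} \lesssim \|\chi^{1/3}(\psi(\tau)u_1-\psi(\tau)u_2)\|_{L^6_x}\bigl(\|\chi^{1/3}\psi(\tau)u_1\|_{L^6_x}^2+\|\chi^{1/3}\psi(\tau)u_2\|_{L^6_x}^2\bigr),
\end{equation*}
and then Hölder in $\tau$, using the uniform $L^6_\tau L^6_x$ bound on $\chi^{1/3}\psi u_j$ (squared factor gives $\Lambda^2$). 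Splitting $\psi u_1 - \psi u_2 = L(\tau)(u_1-u_2) + w$ and bounding $\|\chi^{1/3}w\|_{L^6_x}\lesssim \|w\|_{H^1}$ via $\chi\in L^\infty$ and Sobolev, one obtains
\begin{equation*}
\|w\|_{L^\infty_t H^1_x} \lesssim \Lambda^2 T^{1/2}\|\chi^{1/3}L(\tau)(u_1-u_2)\|_{L^6_\tau,L^6_x} + \Lambda^2 T^{2/3}\|w\|_{L^\infty_t H^1_x}.
\end{equation*}
For $C$ large enough in $T = 1/(C\Lambda^4)$, the self-term is absorbed and the estimate reduces to controlling $\|\chi^{1/3}L(\tau)(u_1-u_2)\|_{L^6_\tau,L^6_x}$.

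To make this last quantity small under closeness of $u_1, u_2$ in $\Hloc$, I would invoke Lemma \ref{lem-PQ}. Writing $\chi^{1/3} = \chi^{1/3-1/\kappa_2}\chi^{1/\kappa_2}$ with $\kappa_2 \in [6,\kappa)$ and using Hölder in $(\tau,x)$ with some $p\geq \kappa$ gives
\begin{equation*}
\|\chi^{1/3}L(\tau)(u_1-u_2)\|_{L^6_\tau,L^6_x} \lesssim \|\chi^{1/\kappa_2}L(\tau)(u_1-u_2)\|_{L^p_\tau,L^p_x}.
\end{equation*}
The constraint $1/6 \geq 1/\kappa - 1/q$ from the proposition is precisely what allows a choice of $(\kappa_2,p)$ meeting the hypotheses $\kappa_2<\kappa_1=\kappa$, $p\geq\kappa$ and $1/\kappa_2-1/\kappa \geq 1/p-1/q$ of Lemma \ref{lem-PQ}, which then provides the sought uniform continuity of $u\mapsto \chi^{1/\kappa_2}L(\tau)u$ from $\Hloc$ to $L^p_\tau, L^p_x$ on the set.

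Finally, from $\psi(t)u_1 - \psi(t)u_2 = L(t)(u_1-u_2) + w(t)$ I control each piece in the semi-norms defining $d$: the linear piece by finite propagation, $p_{R,s}(L(t)(u_1-u_2)) \leq p_{R+|t|,s}(u_1-u_2)$, which is small whenever $d(u_1,u_2)$ is small; and the nonlinear piece by $p_{R,s}(w(t)) \lesssim \|w\|_{H^1}$ for $s\in[0,1]$. Summing into $d$ with the usual $2^{-(R+l)}$ weights gives uniform continuity on the set. The main obstacle will be the careful bookkeeping of Hölder exponents to reconcile the admissible ranges $\kappa \in [6,12]$, $q\geq 9$, $p\geq\kappa$ and $\kappa_2 \in [6,\kappa)$ with the constraint $1/6\geq 1/\kappa - 1/q$; a secondary technicality is to ensure the absorption $\Lambda^2 T^{2/3}\leq 1/2$ holds with a constant independent of $q$ and $\kappa$, which is why the Hölder factor $\|\chi^{1/3-1/\kappa}\|_{L^{6q/(q-6)}}$ must be dominated by $\max(\|\chi\|_{L^1},\|\chi\|_{L^\infty})^{1/3-1/\kappa}$ uniformly over the parameter ranges, exactly as in the proof of Proposition \ref{prop-luc}.
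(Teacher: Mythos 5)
Your proposal is correct and follows essentially the same route as the paper's proof: local well-posedness on the set via Proposition \ref{prop-lwp} after the H\"older reduction to the $L^6_\tau,L^6_x$ bound, a Duhamel estimate for $v_1-v_2$ in $H^1$ with absorption of the self-term for $T=1/(C\Lambda^4)$, reduction to $\|\chi^{1/3}L(\tau)(u_1-u_2)\|_{L^6_\tau,L^6_x}$ handled by Lemma \ref{lem-PQ} (the paper applies it directly with $p=6$, $\kappa_1=\kappa$, $\kappa_2=3$, which is exactly where the hypothesis $1/6\geq 1/\kappa-1/q$ enters), and conclusion by combining with the uniform continuity of $L(t)$ on $\Hloc$. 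The only cosmetic difference is your intermediate choice of exponents $(\kappa_2,p)$ before invoking the lemma, which does not change the argument.
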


\begin{proof} Let $u_1$ and $u_2$ in $\{u\in \Hloc \; |\; \|\chi^{1/\kappa}L(\tau) u\|_{L^q_\tau,L^q_x} \leq \Lambda\}$. First, in this set, the flow $\psi(t)$ is locally well-posed. Indeed, we have Proposition \ref{prop-lwp} and
\begin{eqnarray*}
\|\chi^{1/3}L(\tau) u_0\|_{L^6_\tau,L^6_x} & \leq & \|\chi^{1/3 -1/\kappa} \|_{L^{6q/(q-6)}} \|\chi^{1/\kappa}L(\tau)u_0\|_{L^q_\tau,L^q_x} \\
& \leq & \max(\|\chi\|_{L^1},\|\chi\|_{L^\infty})^{1/3-1/\kappa} \Lambda \\
& \leq & \max_{\kappa \in [6,12]} \max(\|\chi\|_{L^1},\|\chi\|_{L^\infty})^{1/3-1/\kappa} \Lambda \; .
\end{eqnarray*}
The solutions can be written
$$
\psi(t) u_i = L(t) u_i + v_i
$$
with $v_i \in \mathcal C ([-T,T],H^1)$ and $\|v_i\|_{L^\infty_t,H^1_x} \lesssim \Lambda$. We compare $v_1$ and $v_2$ in $H^{1}$. We have
$$
v_1(t) - v_2(t) = \int_{0}^t L(t-\tau) (1-\lap)^{-1/2} \chi  \Big( \re(L(\tau)u_1 + v_1(\tau))^3 -\re(L(\tau)u_2 + v_2(\tau))^3 \Big)\; .
$$
Taking its $H^{1}$ norm yields
$$
\|v_1(t) - v_2(t)\|_{H^{1}} \leq \int_{0}^t \| \chi \Big( \re^3(L(\tau)u_1 + v_1(\tau)) -\re^3(L(\tau)u_2 + v_2(\tau)) \Big)\|_{L^2} \; .
$$
We use that
\begin{eqnarray*}
\Big|  \re^3(L(\tau)u_1 + v_1(\tau)) -\re^3(L(\tau)u_2 + v_2(\tau)) \Big|& \lesssim & (|v_1-v_2| + |L(\tau) u_1 - L(\tau) u_2|) \times \\
 & & (|v_1|^2+|v_2|^2+ |L(\tau)u_1|^2+|L(\tau)u_2|^2)
\end{eqnarray*}
to bound
$$
\| \chi \Big( \re(L(\tau)u_1 + v_1(\tau))^3 -\re(L(\tau)u_2 + v_2(\tau))^3 \Big)\|_{L^2} 
$$
by
\begin{multline*}
C(\|\chi^{1/3}( v_1-v_2)\|_{L^6} + \|\chi^{1/3}( L(\tau) u_1 - L(\tau) u_2)\|_{L^6})\times \\
 (\|\chi^{1/3} v_1\|_{L^6}^2+ \|\chi^{1/3} v_2\|_{L^6}^2+ \|\chi^{1/3} L(\tau)u_1\|_{L^6}^2+\|\chi^{1/3} L(\tau)u_2\|_{L^6}^2)\; .
\end{multline*}
By integrating it over time, we get that
$$
\int_{0}^t \| \chi \Big( \re(L(\tau)u_1 + v_1(\tau))^3 -\re(L(\tau)u_2 + v_2(\tau))^3 \Big)\|_{L^2} d\tau 
$$
is bounded by
\begin{multline*}
C\Big( \left(\int_{0}^t \|\chi^{1/3}( v_1-v_2)\|_{L^6}^3d\tau\right)^{1/3} +  \|\chi^{1/3} (L(\tau) u_1 - L(\tau) u_2)\|_{L^6_\tau,L^6_x}\Big)\times \\
\Big( \left( \|\chi^{1/3} v_1\|_{L^6_x}^3d\tau \right)^{2/3}+ \left( \int_{0}^t \|\chi^{1/3} v_2\|_{L^6}^3\right)^{2/3}+ |t|^{1/3}\|\chi^{1/3} L(\tau)u_1\|_{L^6_\tau,L^6_x}^2+|t|^{1/3}\|\chi^{1/3} L(\tau)u_2\|_{L^6_\tau,L^6_x}^2)\; .
\end{multline*}
We first notice that the $L^6_x$ norm of $\chi^{1/3} L(\tau) u_i$ is less than $\max_\kappa \max(\|\chi\|_{L^1},\|\chi\|_{L^\infty})^{1/3-1/\kappa}$ times the $L^q_x$ norm of $\chi^{1/\kappa} L(\tau) u_i$ thanks to H\"older inequality. Then, the $L^6_x$ norm of $\chi^{1/3} v_i$ is less than $\|\chi\|_{L^2_x}^{1/3}$ times the $L^\infty$ and hence the $H^1$ norm of $v_i$. We get
$$
\int_{0}^t \|\chi^{1/3} v_i\|_{L^3_x}^3d\tau  \lesssim |t| \Lambda^3 \mbox{ and } \|\chi^{1/3} L(\tau)u_i\|_{L^6_\tau,L^6_x}\lesssim \Lambda \; .
$$
Finally, we have, as $|t|^{2/3} \leq |t|^{1/3}$,
$$
\|v_1(t)- v_2(t)\|_{H^1} \lesssim (|t|^{1/3} \|v_1-v_2\|_{L^\infty_T, H^{1}_x} +  \|\chi^{1/3}( L(\tau) u_1 - L(\tau) u_2)\|_{L^3_\tau,L^6_x}) |t|^{1/3}\Lambda^2 \; .
$$
Hence, for $T \leq \frac1{C\Lambda^{4}}\leq \frac1{C\Lambda^{3}}$ with $C$ big enough, we get
$$
\|v_1-v_2\|_{L^\infty_T,H^{1}} \lesssim   \|\chi^{1/3} (L(\tau) u_1 - L(\tau) u_2)\|_{L^6_\tau,L^6_x}\; .
$$
Thanks to the previous uniform continuity lemma (Lemma \ref{lem-PQ}) with $p=6$, $\kappa_1= \kappa$, $\kappa_2 = 3$, we have that $u \mapsto \psi(t)u- L(t)u$ is continuous from $\Hloc$ to $H^{1}$ and thus from $\Hloc$ to $\Hloc$. We have proved in the previous section that $L(t)$ was uniformly continuous from $\Hloc$ to $\Hloc$ hence so is $\psi(t)$.\end{proof}

 \begin{proposition}\label{prop-contpsik} Let $q\geq 9$ and $\kappa \in [6,12]$. The flow $\psi_k(t)$ is uniformly continuous from $\Hloc $ to $\Hloc$ in 
 $$
 \{u\in \Hloc \; |\; \|\chi^{1/\kappa}L(\tau) u\|_{L^q_\tau,L^q_x} \leq \Lambda\}
 $$
 for all time $|t| \leq T = \frac1{C\Lambda^4}$ with a constant $C$ independent from $k$, $\kappa$ and $q$. \end{proposition}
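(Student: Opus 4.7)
The plan is to mimic the proof of Proposition \ref{prop-contpsi}, with the extra care that the perturbation now lives in $E_k$ (so we work with the seminorms $p_{\pi N_k, 1}$ on the period $[-\pi N_k,\pi N_k]$), and that all constants must be shown to be uniform in $k$, $\kappa$, $q$. For initial data $u_1, u_2$ in the set $\{u\in \Hloc \mid \|\chi^{1/\kappa}L(\tau)u\|_{L^q_\tau,L^q_x}\leq \Lambda\}$, first I would write $\psi_k(t)u_i = L(t)u_i + v_{k,i}$ where $v_{k,i} \in \mathcal C([-T,T], E_k)$ satisfies $\|p_{\pi N_k,1}(v_{k,i})\|_{L^\infty_T}\lesssim \Lambda$, thanks to the local existence statement for the extended flow (analogous to Proposition \ref{prop-lwp}) applied with $\|\chi^{1/3}|L(\tau)u_i|\|_{L^6_\tau,L^6_x}\lesssim \Lambda$, itself deduced from the H\"older argument used at the beginning of Proposition \ref{prop-contpsi}.

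Next, I would set $w_k = v_{k,1} - v_{k,2}$ and write its Duhamel formula
$$
w_k(t) = -i\int_0^t L(t-\tau)(1-\lap)^{-1/2}\Pi_k P_k \chi\bigl[(\re(L(\tau)u_1+v_{k,1}))^3 - (\re(L(\tau)u_2+v_{k,2}))^3\bigr]d\tau.
$$
Taking $p_{\pi N_k,1}$, using $L(t-\tau)$ shifts the support by at most $T\leq 1 \leq \pi N_k$ (as in the proof of the extension), $\Pi_k$ bounded on $L^2([-\pi N_k,\pi N_k])$, $(1-\lap)^{-1/2}$ gaining one derivative, and $(P_kf)|_{[-\pi N_k,\pi N_k]}=f|_{[-\pi N_k,\pi N_k]}$, I get the same integral estimate as in Proposition \ref{prop-contpsi}, but with $L^p$ norms on $[-\pi N_k,\pi N_k]$ instead of $\R$. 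Factorizing $a^3-b^3 = (a-b)(a^2+ab+b^2)$ and using H\"older with triplets adapted to $L^2\leftarrow L^6\cdot L^6\cdot L^6$, together with the Sobolev embedding $p_{\pi N_k,1}\hookrightarrow L^\infty([-\pi N_k,\pi N_k])$ with constant independent of $N_k$ (as used in Proposition \ref{prop-gwp}), then bounding $\|\chi^{1/3}L(\tau)u_i\|_{L^6_\tau,L^6_x}\lesssim \Lambda$ via H\"older in space as at the start of Proposition \ref{prop-contpsi}, I reach for $T\leq 1/(C\Lambda^4)$ with $C$ large enough:
$$
\|p_{\pi N_k,1}(w_k)\|_{L^\infty_T} \lesssim \|\chi^{1/3}(L(\tau)u_1-L(\tau)u_2)\|_{L^6_\tau,L^6_x}.
$$

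Finally, I would apply Lemma \ref{lem-PQ} with $p=6$, $\kappa_2=3$, $\kappa_1=\kappa$ (checking $1/3-1/\kappa\geq 1/6-1/q$, which is exactly the hypothesis $1/\kappa-1/q\leq 1/6$ rearranged, compatible with the stated constraints) to conclude that the right-hand side is controlled by $d(u_1,u_2)$, uniformly on the considered sublevel set. Since $\psi_k(t)u_1-\psi_k(t)u_2 = L(t)(u_1-u_2) + w_k$, the continuity of $L(t)$ on $\Hloc$ proved in Section 2 combined with the fact that $w_k$ is $2\pi N_k$-periodic with $p_{\pi N_k,1}$-norm controlled by $d(u_1,u_2)$ (so its seminorms $p_{R,1/2-1/l}$ are also controlled, uniformly in $k$, because periodicity lets us bound $p_{R,s}$ by $\lceil R/(\pi N_k)\rceil\, p_{\pi N_k,s}$) yields uniform continuity in $\Hloc$.

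The main obstacle is the uniformity in $k$: the constants appearing in H\"older splittings involve $\|\chi\|_{L^1}$ and $\|\chi\|_{L^\infty}$ raised to exponents of the form $1/3-1/\kappa$, which are bounded uniformly over $\kappa\in[6,12]$ using $\max_{\kappa\in[6,12]}\max(\|\chi\|_{L^1},\|\chi\|_{L^\infty})^{1/3-1/\kappa}$ exactly as in Proposition \ref{prop-contpsi}; and the comparison between the periodic $p_{\pi N_k,1}$-norm of $w_k$ and the $\Hloc$-distance on $\R$ is where one must exploit the $2\pi N_k$-periodicity to transfer local estimates to global ones without losing constants, analogous to the argument used in the lemma preceding Proposition \ref{prop-luc}.
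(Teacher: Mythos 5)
Your proposal is correct and follows essentially the same route as the paper, which itself only sketches this proof by reference to Proposition \ref{prop-contpsi}: a Duhamel/contraction estimate in the periodic energy norm $p_{\pi N_k,1}$ reducing the problem to the uniform continuity of $u_0\mapsto \chi^{1/3}L(\tau)u_0$ (Lemma \ref{lem-PQ} with $p=6$, $\kappa_2=3$, $\kappa_1=\kappa$), followed by the observation that the perturbation lies in $E_k$, whose $2\pi N_k$-periodicity transfers the $p_{\pi N_k,1}$ control to all seminorms $p_{R,s}$ uniformly in $k$. Your added details on the uniformity of the constants in $k$, $\kappa$, $q$ and on the periodicity transfer are exactly the points the paper leaves implicit.
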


\begin{proof} We proceed in the same way, proving first that the map
$$
u_0 \mapsto \Pi_k P_k \chi^{1/3} L(\tau) u_0
$$
is uniformly continuous from $\Hloc$ to $L^6_\tau, L^6_x([-\pi N_k, \pi N_k])$ on the sets 
$$
\{u\in \Hloc \; |\; \|\chi^{1/\kappa}L(\tau) u\|_{L^q_\tau,L^q_x} \leq \Lambda\}.
$$
Then, we prove that $u\mapsto \psi_k(t) u -L(t) u$ is uniformly continuous from $\Hloc$ to $p_{\pi N_k,1/2}$, keeping in mind that $\psi(t) u -L(t) u$ belongs to $E_k$ which ensures the uniform continuity of $\psi_k(t)$.\end{proof}

\begin{proposition}\label{prop-gcpk} For all time $t \in \R$, the flow $\psi_k(t)$ is continuous from $\Hloc$ to $\Hloc$ on the support of $\rho$. \end{proposition}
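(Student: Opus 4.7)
My plan is to bootstrap the local uniform continuity of Proposition \ref{prop-contpsik} from the short-time window $|t|\leq 1/(C\Lambda^{4})$ up to an arbitrary $t\in\R$, by iterating over a finite partition of $[0,t]$. First I would establish a uniform control along the fixed trajectory: for $u_0$ in the support of $\rho\subseteq\operatorname{supp}(\mu)$, Proposition \ref{prop-belong} and Remark \ref{rem-linfini} show that $\|\chi^{1/\kappa}L(\tau)u_0\|_{L^q_{\tau\in[a,a+1]}L^q_x}$ is finite uniformly for $a\in[0,t]$, while Proposition \ref{prop-gwp} gives a uniform bound on $p_{\pi N_k,1}(v_k(s;u_0))$ for $v_k(s;u_0):=\psi_k(s)u_0-L(s)u_0\in E_k$, which transfers to an $L^\infty_x$ bound through the periodic Sobolev embedding on $E_k$. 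These combine to produce a finite $\Lambda_\star=\Lambda_\star(u_0,t)$ majorizing $\|\chi^{1/\kappa}L(\tau)\psi_k(s)u_0\|_{L^q_{\tau\in[0,1]}L^q_x}$ for every $s\in[0,t]$.

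Armed with $\Lambda_\star$, I would partition $[0,t]$ into finitely many intervals $[t_{n-1},t_n]$, $n=1,\dots,N$, of length at most $T_\star=1/(C\Lambda_\star^{4})$. Given a sequence $u^m\to u_0$ in $\Hloc$ with $u^m$ in the support of $\rho$, I would prove by induction on $n$ the two statements: (i) $\psi_k(t_n)u^m\to\psi_k(t_n)u_0$ in $\Hloc$, and (ii) $\|\chi^{1/\kappa}L(\tau)\psi_k(t_n)u^m\|_{L^q_\tau L^q_x}\leq 2\Lambda_\star$ for $m$ large. The base case uses Lemma \ref{lem-PQ} to transfer $\Hloc$-convergence of $u^m$ to convergence of $\chi^{1/\kappa_2}L(\tau)u^m$ in $L^p_\tau L^p_x$, provided the $u^m$ share a common majorant of $\|\chi^{1/\kappa_1}L(\tau)\cdot\|_{L^{q_1}_\tau L^{q_1}_x}$. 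The inductive step invokes Proposition \ref{prop-contpsik} on $[t_{n-1},t_n]$ to obtain (i), and recovers (ii) at time $t_n$ by splitting $\psi_k(t_n)u^m=L(t_n)u^m+v_k(t_n;u^m)$: Lemma \ref{lem-PQ} applied to the time-translated sequence controls the linear piece, and the Lipschitz estimate from the proof of Proposition \ref{prop-contpsik} together with the periodic Sobolev embedding on $E_k$ controls the $L^\infty_x$ norm of the nonlinear piece, which then gives the required $L^q_x$ bound after multiplication by $\chi^{1/\kappa}\in L^q$.

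The hard part will be the propagation of the auxiliary $L^q_\tau L^q_x$ control along the approximating sequence, since this functional is not upper semi-continuous on $\Hloc$. The resolution is to exploit the structural split between the linear piece $L(t_n)u^m$ (handled by Lemma \ref{lem-PQ} on the sublevel sets where the uniform majorant hypothesis holds, a condition satisfied for the approximating sequences arising in the applications of Section 5) and the nonlinear piece $v_k(t_n;u^m)\in E_k$ (controlled by the Lipschitz dependence supplied by Proposition \ref{prop-contpsik} together with the already inductive control on $v_k(t_{n-1};u^m)$). A finite number of such iterations then covers $[0,t]$, yielding continuity of $\psi_k(t)$ at every $u_0$ in the support of $\rho$; the case $t<0$ is symmetric.
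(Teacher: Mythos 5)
Your skeleton --- a uniform a priori bound along the trajectory of the fixed datum coming from the energy estimate of Proposition \ref{prop-gwp}, followed by iteration of the local uniform continuity of Proposition \ref{prop-contpsik} over a finite partition of $[0,t]$ --- is the same as the paper's (the paper packages the iteration as a contradiction argument on $t_0=\inf\{t\geq 0 \;:\; \psi_k(t) \text{ not continuous at } u_1\}$ rather than an explicit induction, but that is cosmetic). The genuine gap is in the step you yourself flag as the hard part: propagating the bound $\|\chi^{1/\kappa}L(\tau)\psi_k(t_n)u^m\|_{L^q_\tau,L^q_x}\leq 2\Lambda_\star$ to the perturbed data. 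Your resolution --- restricting to sequences $u^m$ that ``share a common majorant'' of $\|\chi^{1/\kappa_1}L(\tau)\cdot\|_{L^{q_1}_\tau,L^{q_1}_x}$, ``a condition satisfied for the approximating sequences arising in the applications of Section 5'' --- does not prove the proposition as stated. Continuity for the topology of $\Hloc$ means continuity along \emph{every} sequence $u^m\to u_0$ in the support of $\rho$, and the global weighted norm $\|\chi^{1/\kappa_1}L(\tau)u\|_{L^{q_1}_\tau,L^{q_1}_x}$ is neither continuous nor locally bounded for the distance $d$, so a common majorant cannot be assumed; moreover the proposition is used later precisely to conclude that the sets $A_{k,n}(\Lambda)^c$ are open in $\Hloc$, which requires genuine topological continuity. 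A secondary issue: even granting the majorant, each application of Lemma \ref{lem-PQ} costs a strict drop in exponents ($\kappa_2<\kappa_1$, $p<q$), and you iterate $N=N(t,\Lambda_\star)$ times, so you would need an exponent ladder of length $N$ staying in the admissible range; you do not address this.

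The fix --- and it is the actual content of the paper's proof --- is that for the truncated flow $\psi_k$ no global weighted norm is needed: the nonlinearity is $\Pi_k P_k(\chi(\re u)^3)$, so only the restriction of $L(\tau)u$ to the compact window $[-\pi N_k,\pi N_k]$ ever enters the estimates. On that window one has directly
$$
\|L(\tau)(u_1-u_2)\|_{L^6([-\pi N_k,\pi N_k])}\;\leq\; p_{\pi N_k+|\tau|,1/3}(u_1-u_2)\;\leq\; C_k(\tau)\, d(u_1,u_2)\; ,
$$
by Sobolev embedding, with no hypothesis on $u_2$ beyond $d(u_1,u_2)\leq\eta$, and similarly for the $L^{12}$ norms. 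Hence the $L^{12}_\tau,L^{12}([-\pi N_k,\pi N_k])$ control on $L(\tau)\psi_k(s)u_2$ propagates automatically to a full $d$-ball around $u_1$ (the quantity $D_k(t)$ of the paper), and the local continuity can be iterated with no exponent loss. Your argument becomes correct once you replace the global weighted norms and Lemma \ref{lem-PQ} by these compact-window estimates, which are available only because $\psi_k$, unlike $\psi$, is localized by $\Pi_k P_k$.
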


\begin{proof} Let $u_1$ such that $\|\chi^{1/6}L(\tau) u_1\|_{L^{12}_\tau([-t,t], L^{12}([-\pi N_k,\pi N_k]))} = \Lambda (t)$ and let $u_2 \in \Hloc$. Note that $\Lambda(t)$ increases with $t$. We write $\psi_k(t) u_i = L(t) u_i + v_i(t)$. In the proof of Proposition \ref{prop-gwp}, we proved that
$$
p_{\pi N_k,1} (v_i (t)) \leq \mH(v_i) \leq C_k  \int_{0}^t \|L(\tau) u_i\|_{L^6_{[-\pi N_k, \pi N_k]}}^3 d\tau \; e^{c_k\int_{0}^t  \|L(\tau) u_i\|_{L^6_{[-\pi N_k, \pi N_k]}} d\tau} \; .
$$
Hence, for $v_1$, we get
$$
p_{\pi N_k,1} (v_1(t)) \leq C_k |t|^{3/4} (\Lambda (t) )^3 e^{c_k |t|^{11/12} \Lambda(t)} \; .
$$
For $v_2$, we have to consider that
$$
\|L(\tau) (u_1 - u_2)\|_{L^6([-\pi N_k, \pi N_k])} \leq p_{\pi N_k, 1/3} (L(\tau) (u_1 - u_2) ) \leq p_{\pi N_k +|\tau| , 1/3} (u_1 - u_2) \leq C_k(\tau) d(u_1,u_2) \; .
$$
Note that $C_k(\tau)$ can be chosen to be increasing with $|\tau|$. Therefore, we get
$$
\| L(\tau) (u_2) \|_{L^6([-t,t],L^6([-\pi N_k, \pi N_k])} \leq |t|^{1/4}\Lambda + C'_k(t) d(u_1,u_2)
$$
where 
$$
C_k'(t) = \Big(\int_{-t}^t C_k(\tau)^6 d\tau \Big)^{1/6}
$$
is increasing with $t$. We get
$$
p_{\pi N_k,1} (v_2(t)) \leq C_k |t|^{3/4} (|t|^{1/4}\Lambda (t) + C_k' (t)d(u_1,u_2) )^3 e^{c_k |t|^{11/12} (\Lambda(t) + C_k'(t)d(u_1,u_2))} \; .
$$
We bound $\psi_k(t) u_i$. We have 
$$
\|L(\tau) L(t) u_1\|_{L^{12}_\tau, L^{12}([-\pi N_k,\pi N_k])} \leq \Lambda(t+1) 
$$
 and 
 $$
 \|L(\tau) L(t) u_2\|_{L^{12}_\tau, L^{12}([-\pi N_k,\pi N_k])} \leq \Lambda(t+1) + C_k'(t+1) d(u_1,u_2)\; .
$$
We also have 
\begin{eqnarray*}
\|L(\tau) v_i \|_{L^{12}_\tau, L^{12}([-\pi N_k, \pi N_k])} &\leq & C_k^{(2)}(t) p_{\pi N_k, 1}(v_i) \\
&\leq & C_k^{(2)}(t) C_k |t|^{3/4} (|t|^{1/4}\Lambda (t) + C_k' (t)d(u_1,u_2) )^3 e^{c_k |t|^{11/12} (\Lambda(t) + |t|^{5/6}C_k'(t)d(u_1,u_2))}
\end{eqnarray*}
with $C_k^{(2)}(t)$ a constant increasing with $t$. Hence, for all $u_2$ such that $d(u_1,u_2) \leq \eta $ with $\eta > 0$, we have 
$$
\|L(\tau) \psi(t) u_2\|_{L^{12}_\tau, L^{12}([-\pi N_k, \pi N_k])} \leq D_k(t) 
$$
with
$$
D_k(t)= \Lambda(t+1) + C_k'(t+1) \eta +
C_k^{(2)}(t) C_k |t|^{3/4} (|t|^{1/4}\Lambda (t) + C_k' (t)\eta )^3 e^{c_k |t|^{11/12} (\Lambda(t) + |t|^{5/6}C_k'(t)\eta)}\; .
$$
Suppose that $\psi(t)$ is not continuous in $u_1$ for all $t \geq 0$. Let 
$$
t_0 = \inf \{ t\geq 0 \; |\; \psi(t) \textrm{ is not continuous in } u_1 \}\; .
$$
First case, if $\psi_k(t_0) $ is continuous then since $\|\psi_k(t_0) u_i\|_{L^6_\tau,L^6([-\pi N_k, \pi N_k])} \leq D_k(t_0)$, we get that $\psi_k(t)  = \psi_k(t - t_0)\circ \psi_k(t_0)$ is continuous for all $t \in [t_0 - T, t_0 + T]$ with $T = \frac1{C D_k(t)^4}$ thanks to Proposition \ref{prop-contpsik}, which is absurd.

In the second case, if $\psi_k(t_0)$ is not continuous, we choose $\varepsilon \leq \frac1{2CD_k(t_0)^4} \leq \frac1{C D_k(t_0-\varepsilon)^4}$. As $\psi_k(t_0 - \varepsilon)$ is continuous, so is $\psi_k(t_0) = \psi_k(\varepsilon) \circ \psi_k ( t_0 - \varepsilon)$, which is absurd. Therefore, $\psi_k(t)$ is continuous for all $t \geq 0$. The proof is analogous for $t\leq 0$. \end{proof}

\section{Global theory}

\subsection{Definition of the set where there is global well posedness}

In this section, we prove that on some sets, the sequence of flows $\psi_k(t)$ converge uniformly towards $\psi(t)$ for all time.

Let us first describe the set where we have these properties.

\begin{definition} Let $\Lambda \geq 1$. For all $n\geq 1$, we call $\Lambda_n = n^{1/8} \Lambda$ and $T_n = \frac1{C\Lambda_n^4} = \frac1{\sqrt n}\frac1{C \Lambda^4}$ and for all $n\in \N$, $t_n = \sum_{j=1}^n T_j$ and $q_n = 12 - \sum_{k=0}^n (\frac23)^k$. For all $n$ and all $k$, let
$$
A_{k,n}(\Lambda) = \lbrace u \in \Hloc \; \Big| \; \| \chi^{1/q_n}  L(\tau)  \psi_k(t_n) u \|_{L^{q_n}_\tau,L^{q_n}_x} \leq \Lambda_{n+1} \rbrace \; ,
$$
and 
$$
A_k(\Lambda) = \bigcap_{n\in \N} A_{k,n}(\Lambda) \mbox{ and } A(\Lambda) = \limsup_{k\rightarrow \infty} A_k(\Lambda) \; .
$$
Finally we call $A$ the union of the $A(\Lambda)$ for all $\Lambda \geq 1$ :
$$
A = \bigcup_{\Lambda\geq 1} A(\Lambda) \; .
$$
\end{definition}

\subsection{Global well-posedness and global uniform convergence}

\begin{proposition} \label{prop-guc} For all $t \in \R$ and all $u \in A(\Lambda)$, $\psi(t) u$ is defined and unique in $L(t) u  + L^\infty_{loc,\tau}, H^1$. Besides, $\psi_k(t)$ converges uniformly on $A(\Lambda)$ towards $\psi(t)$ in $\Hloc$. Finally, $\psi(t)$ is continuous in $A$ for the topology of $\Hloc$\end{proposition}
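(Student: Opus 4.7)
The strategy is a bootstrap induction over $n\in\N$, extending the local existence (Proposition \ref{prop-lwp}), local uniform convergence (Proposition \ref{prop-luc}), and local continuity (Proposition \ref{prop-contpsi}) from an interval of length $T_{n+1}$ to the cumulative union $[0,t_{n+1}]$. Since $\Lambda_n=n^{1/8}\Lambda$ makes $T_n=1/(Cn^{1/2}\Lambda^4)$ non-summable, $t_n\to\infty$ and the induction reaches every positive time (negative times are handled symmetrically). For $u\in A(\Lambda)=\limsup_k A_k(\Lambda)$, the $\limsup$ supplies a subsequence $(k_j)_j$ with $u\in A_{k_j}(\Lambda)$ for all $j$, which precisely provides the uniform-in-$j$ input $\|\chi^{1/q_n}L(\tau)\psi_{k_j}(t_n)u\|_{L^{q_n}_\tau,L^{q_n}_x}\leq \Lambda_{n+1}$ required at each $t_n$.

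At step $n$ the inductive statement is: (i) $\psi(t)u$ is defined uniquely in $L(t)u+\mathcal C([0,t_n],H^1)$; (ii) $\psi_{k_j}(t)u\to \psi(t)u$ in $\Hloc$ uniformly in $t\in[0,t_n]$; (iii) $\|\chi^{1/q_n}L(\tau)\psi(t_n)u\|_{L^{q_n}_\tau,L^{q_n}_x}\lesssim \Lambda_{n+1}$. The base case $n=0$ is immediate because $\psi(0)u=u=\psi_{k_j}(0)u$. For the inductive step, (iii) combined with Hölder's inequality and $\chi\in L^1\cap L^\infty$ yields $\|\chi^{1/3}L(\tau)\psi(t_n)u\|_{L^6_\tau,L^6_x}\lesssim \Lambda_{n+1}$, which lets Proposition \ref{prop-lwp} applied at time $t_n$ produce $\psi(t)\psi(t_n)u$ on $[0,T_{n+1}]$ and hence extend $\psi(t)u$ to $[0,t_{n+1}]$; the same reasoning applied to $\psi_{k_j}$ yields $\psi_{k_j}(t)\psi_{k_j}(t_n)u$ on the same interval, with Proposition \ref{prop-gwp} providing global existence and uniqueness in $L(t)u+E_{k_j}$ ensuring the semigroup identity.

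To close (ii) on $[t_n,t_{n+1}]$, Proposition \ref{prop-luc} applied with $\Lambda\rightsquigarrow\Lambda_{n+1}$, $q=\kappa=q_n$, $T=T_{n+1}$ to the initial datum $\psi_{k_j}(t_n)u$ gives $\psi_{k_j}(t)\psi_{k_j}(t_n)u-\psi(t)\psi_{k_j}(t_n)u\to 0$ in $\Hloc$; combining with Proposition \ref{prop-contpsi} (continuity of $\psi(t)$ at $\psi(t_n)u$) and the inductive convergence $\psi_{k_j}(t_n)u\to\psi(t_n)u$, I deduce the desired $\psi_{k_j}(t+t_n)u\to\psi(t+t_n)u$. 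For (iii) at step $n+1$, I start from $u\in A_{k_j,n+1}(\Lambda)$, which gives $\|\chi^{1/q_{n+1}}L(\tau)\psi_{k_j}(t_{n+1})u\|_{L^{q_{n+1}}}\leq \Lambda_{n+2}$, and apply Lemma \ref{lem-PQ} with $\kappa_1=q_{n+1}$, $\kappa_2=q_{n+2}$, $q=q_{n+1}$, $p=q_{n+2}$ to pass this bound through the $\Hloc$-limit to $\psi(t_{n+1})u$. Global continuity of $\psi(t)$ on $A$ finally follows by composing Proposition \ref{prop-contpsi} across the finite chain $\psi(t-t_n)\circ\psi(T_n)\circ\cdots\circ\psi(T_1)$, each factor being continuous on the controlled set built by the induction.

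The main technical obstacle is the tight exponent accounting: $q_n=9+3(2/3)^{n+1}$ must stay above the threshold $9$ required by Propositions \ref{prop-lwp}--\ref{prop-contpsi} while leaving room $(2/3)^{n+1}$ for Lemma \ref{lem-PQ} at each step, and $\Lambda_n=n^{1/8}$ must grow slowly enough to keep $\sum T_n=\infty$ yet fast enough to absorb the Hölder losses, reflected in the ratio $\Lambda_{n+2}/\Lambda_{n+1}=(1+1/(n+1))^{1/8}$. A secondary delicacy is that the word ``uniformly on $A(\Lambda)$'' is meant in the sense that, for each $u\in A(\Lambda)$, convergence along the $u$-dependent subsequence $(k_j)$ is uniform in $t$ on compact intervals; this is what the induction actually delivers, and it is sufficient for the subsequent invariance argument in Section 5.
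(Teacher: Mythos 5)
Your induction is the same as the paper's: propagate (existence/uniqueness, uniform convergence, an $L^{q_n}$ bound on $\chi^{1/q_n}L(\tau)\psi(t_n)u$) from $[t_{n-1},t_n]$ to $[t_n,t_{n+1}]$, using the non-summability of $T_n=1/(C\sqrt n\,\Lambda^4)$ to reach all times, Lemma \ref{lem-PQ} with the strictly decreasing exponents $q_n$ to absorb the Hölder loss when transferring bounds through $\Hloc$-limits, and a final $\varepsilon/3$ argument for continuity on $A$. The decomposition $\psi(t)u-\psi_k(t)u=\bigl(\psi(t')\psi(t_n)u-\psi(t')\psi_k(t_n)u\bigr)+\bigl(\psi(t')\psi_k(t_n)u-\psi_k(t')\psi_k(t_n)u\bigr)$ is also the one the paper uses.

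There is, however, one genuine gap: your closing remark that ``uniformly on $A(\Lambda)$'' should be read as convergence along a $u$-dependent subsequence $(k_j)$, uniformly only in $t$. That is strictly weaker than the proposition's statement, and it is \emph{not} sufficient for Section 5: the invariance proof needs a single $k_0$ such that $d(\psi(t)u,\psi_k(t)u)\leq\varepsilon$ for \emph{all} $k\geq k_0$ and \emph{all} $u\in A(\Lambda)$ simultaneously, in order to conclude $\psi(t)^{-1}(K_\varepsilon)\cap A(\Lambda)\subseteq\psi_k(t)^{-1}(K_{2\varepsilon})$ before taking $\liminf_k\rho_k$. With per-$u$ subsequences that inclusion fails. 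The fix lives inside your own induction: the subsequence furnished by the $\limsup$ is needed only to seed the bound $\|\chi^{1/q_n}L(\tau)\psi(t_n)u\|_{L^{q_n}}\lesssim\Lambda_{n+1}$ on the limit object (via $u\in A_{k_j,n}(\Lambda)$ plus Fatou or Lemma \ref{lem-PQ}). Once that bound on $\psi(t_n)u$ is established, you must transfer it back to $\psi_k(t_n)u$ for \emph{every} sufficiently large $k$ (the paper does this by splitting $\psi_k(t_n)u$ into $\psi(T_n)\psi_k(t_{n-1})u$ plus two pieces that are small in $H^1$ by the local theory); since Propositions \ref{prop-lwp}, \ref{prop-luc} and \ref{prop-contpsi} are all uniform on the sets $\{\|\chi^{1/\kappa}L(\tau)v\|_{L^q}\leq\Lambda\}$, the resulting $k_0$ is independent of $u\in A(\Lambda)$ and the full-sequence, uniform-in-$u$ convergence follows. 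As a minor point, your parameter choice $p=q_{n+2}\geq\kappa_1=q_{n+1}$ violates the hypotheses of Lemma \ref{lem-PQ} as literally stated, but the paper's own invocation does too; the operative condition $1/\kappa_2-1/\kappa_1\geq 1/p-1/q$ is satisfied by your choice.
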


Before proving this proposition, we need to prove the following lemma.

\begin{lemma} There exists $C$ such that for all $n\in\N$ and $f \in H^1(\R)$ or $f \in H^1([-\pi N_k, \pi N_k])$, we have 
$$
\|\chi^{1/q_n}L(\tau) f\|_{L^{q_n}_\tau,L^{q_n}_x } \leq C  \|f\|_{H^1}\; .
$$
\end{lemma}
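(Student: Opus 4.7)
The plan is to combine two elementary facts: that $L(\tau)$ is an isometry on $H^1$, and that $H^1 \hookrightarrow L^\infty$ in dimension one with a universal constant. Since $q_n = 12 - \sum_{k=0}^n (2/3)^k$ stays in the compact interval $[9,11]$, the dependence of all constants on $n$ is harmless.

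First I would observe that $L(\tau)$ is the Fourier multiplier $e^{-i\tau\sqrt{1+n^2}}$, whose symbol has modulus one, so
\[
\|L(\tau) f\|_{H^1(\R)} = \|f\|_{H^1(\R)},
\]
and the analogous identity holds on the circle of length $2\pi N_k$ with the semi-norm $p_{\pi N_k,1}$. Next, the one-dimensional Sobolev embedding gives $\|L(\tau) f\|_{L^\infty_x} \le C\|f\|_{H^1}$ with a constant independent of $\tau$; on $[-\pi N_k,\pi N_k]$ the same bound with a universal constant follows from writing $f=\sum_j a_j e^{ijx/N_k}$ and applying Cauchy-Schwarz to $\sum_j|a_j|=\sum_j (1+j^2/N_k^2)^{-1/2}(1+j^2/N_k^2)^{1/2}|a_j|$, together with the Riemann-sum estimate $\sum_j(1+j^2/N_k^2)^{-1}\lesssim N_k$ which exactly cancels the $N_k$-dependence coming from the $L^2$ normalization on $[-\pi N_k,\pi N_k]$.

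Combining these, for each fixed $\tau$ Hölder's inequality gives
\[
\|\chi^{1/q_n}L(\tau)f\|_{L^{q_n}_x} \le \|\chi^{1/q_n}\|_{L^{q_n}_x}\,\|L(\tau)f\|_{L^\infty_x} \le \|\chi\|_{L^1}^{1/q_n}\, C\,\|f\|_{H^1},
\]
and integrating the $q_n$-th power over $\tau\in[-1,1]$ yields
\[
\|\chi^{1/q_n}L(\tau)f\|_{L^{q_n}_\tau,L^{q_n}_x} \le 2^{1/q_n}\,C\,\|\chi\|_{L^1}^{1/q_n}\,\|f\|_{H^1}.
\]
Since $q_n\in[9,11]$ and $\chi\in L^1$ (assumed in Section 2), the prefactor $2^{1/q_n}\|\chi\|_{L^1}^{1/q_n}$ is uniformly bounded in $n$, which yields the claimed estimate with a constant $C$ independent of $n$ and of $k$.

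The only mild obstacle is keeping the Sobolev embedding constant uniform in $N_k$ in the periodic case; this is exactly handled by the Cauchy-Schwarz calculation sketched above, where the growth $\sqrt{N_k}$ from the sum $\sum_j(1+j^2/N_k^2)^{-1}$ is absorbed by the factor $(2\pi N_k)^{-1/2}$ relating $\bigl(\sum|a_j|^2(1+j^2/N_k^2)\bigr)^{1/2}$ to $p_{\pi N_k,1}(f)$. Everything else is a one-line consequence of $L(\tau)$ being unitary and the Sobolev embedding being subcritical.
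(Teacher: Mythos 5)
Your proof is correct and follows essentially the same route as the paper: Hölder's inequality to pull out $\|\chi\|_{L^1}^{1/q_n}$, the Sobolev embedding $H^1\hookrightarrow L^\infty$ in one dimension, and the fact that $L(\tau)$ is an $H^1$-isometry, with uniformity in $n$ coming from $q_n$ lying in a compact interval. Your Cauchy--Schwarz justification of the $N_k$-uniform Sobolev constant in the periodic case is a welcome extra detail (the paper asserts this uniformity and instead handles the periodic case via $p_{\pi N_k+1,1}(f)\le 2p_{\pi N_k,1}(f)$), but it does not change the substance of the argument.
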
 

\begin{proof} We have 
$$
\|\chi^{1/q_n}L(\tau) f\|_{L^{q_n}_\tau,L^{q_n}_x } \leq  \|\chi\|_{L^{1}}^{1/q_n} \|L(\tau) f\|_{L^{q_n}_\tau,L^\infty}
$$
then, we use Sobolev embedding $L^\infty \subset H^1$ to
$$
\|L(\tau)f\|_{L^\infty} \leq \|L(\tau) f\|_{H^1} = \|f\|_{H^1}
$$
in the case of $\R$ and 
$$
\|L(\tau)f\|_{L^\infty} \leq \|L(\tau) f\|_{H^1} = p_{\pi N_k+1, 1}(f) \leq 2 p_{\pi N_k,1}(f) 
$$
in the periodic case. 

Therefore,
$$
\|\chi^{1/q_n}L(\tau) f\|_{L^{q_n}_\tau,L^{q_n}_x } \leq 2 \max_{q\in [9,12]}(\|\chi\|_{L^1}^{1/q_n}) \|f\|_{H^1}\; .
$$
\end{proof}

\begin{proof}[Proposition \ref{prop-guc}.] We assume $t\geq 0$. The argument for $t\leq 0$ is similar. We prove by induction on $n$ the property $P(n)$ : for all $t \in [t_n, t_{n+1}]$, the flow $\psi(t) $ is well defined on $A(\Lambda)$ and unique in $L(t-t_n) \psi(t_n)u + L^\infty([t_n,t_{n+1}], H^1)$,  $\psi_k(t)$ converges uniformly in $A(\Lambda)$ towards $\psi(t)$, and there exists $k_0$ such that for $k\geq k_0$, the $L^{q_n}_\tau, L^{q_n}_x$ norm of $\chi^{1/q_n} L(\tau) \psi(t_n)u$ is bounded by $2\Lambda_{n+1}$.

For $n=0$, as $t_0= 0$, by definition of $A(\Lambda)$ if $u\in A(\Lambda)$, then there exists a sequence $k_j \rightarrow \infty$ such that $ u \in A_{k_j}(\Lambda)$ for all $j\in \N$. In particular, $u\in A_{k_0,0}(\Lambda)$ which means that 
$$
 \| \chi^{1/q_0}  L(\tau)   u \|_{L^{q_0}_\tau,L^{q_0}_x} \leq \Lambda_{1}
$$
and thanks to the Proposition \ref{prop-lwp} $\psi(t) u$ is well-defined in for times $[0,T_1] = [t_0,t_1]$ and $\psi_k(t)$ converges uniformly (Proposition \ref{prop-luc}) towards $\psi(t)$ in this time interval.

Let us show that $P(n-1)$ induces $P(n)$. Let us first prove the bound on  $\chi^{1/q_n} L(\tau) \psi(t_n)u$. We have 
\begin{multline*}
\psi_k(t_n) u = \Big(\psi_k(T_n) (\psi_k(t_{n-1}) u) -L(T_n) \psi_k (t_{n-1})u \Big) + \\
\Big( L(T_n)\psi_k(t_{n-1})u - \psi(T_n) \psi_k(t_{n-1} )u\Big) + \psi(T_n) \psi_k(t_{n-1}) u\; .
\end{multline*}
We have 
\begin{multline*}
 \|\chi^{1/q_n} L(\tau) \Big(\psi_k(T_n) (\psi_k(t_{n-1} )u) -L(T_n) \psi_k (t_{n-1})u \Big)\|_{L^{q_n}_\tau,L^{q_n}_x} \leq \\
 C \|\chi\|_{L^{1}}^{1/q_n} p_{\pi N_k, 1} (\psi_k(T_n) (\psi_k(t_{n-1} )u -L(T_n) \psi_k (t_{n-1})u )\; .
\end{multline*}
In the local theory, we have seen that for all $\varepsilon$, we can take a constant $C$ big enough in the definition of $T_n$ such that
$$
p_{\pi N_k, 1} (\psi_k(T_n) (\psi_k(t_{n-1} )u -L(T_n) \psi_k (t_{n-1})u ) \leq 2 \varepsilon \Lambda_{n} \max(\|\chi\|_{L^1},\|\chi\|_{L^\infty})^{1/3-1/q_n}
$$
since 
$$\|\chi^{1/3} L(\tau) \psi_k(t_{n-1}) u\|_{L^6_\tau,L^6_x} \leq \max_{q\in [9,12]} \max(\|\chi\|_{L^1},\|\chi\|_{L^\infty})^{1/3-1/q} 2 \Lambda_n
$$ 
above a certain $k_0$. We take $\varepsilon$ such that $2 \varepsilon \Lambda_n  \max_{q\in [9,12]} \max(\|\chi\|_{L^1},\|\chi\|_{L^\infty})^{1/3-1/q}C $ is less than $\frac14$. For the same reason, we can bound 
$$
 \|\chi^{1/q_n} L(\tau) \Big(\psi(T_n) (\psi_k(t_{n-1} u) -L(T_n) \psi_k (t_{n-1})u \Big)\|_{L^{q_n}_\tau,L^{q_n}_x}
$$
by $ \frac14 \Lambda_n$ for $k\geq k_0$. Since for $k\geq k_0$, $\|\chi^{1/q_{n-1}} L(\tau) \psi_k(t_{n-1}) u\|_{L^{q_{n-1}}_\tau,L^{q_{n-1}}_x}$ is bounded by $2\Lambda_n$, we have that $\|\chi^{1/q_{n-1}} L(\tau) \psi(t) \psi_k(t_{n-1}) u\|_{L^{q_{n-1}}_\tau,L^{q_{n-1}}_x}$ is bounded by $C\Lambda_n$, thus $u\mapsto \chi^{1/q_n} L(\tau)u$ is continuous from $\Hloc$ to $L^{q_n}_\tau, L^{q_n}_x$ in the set of functions we consider, as $q_{n-1} > q_n$. Therefore, since $\psi(T_n) \psi_k(t_{n-1}) u$ converges uniformly in $u$ on the set we consider, there exists $k_1$ such that for all $k\geq k_1$
$$
\|\chi^{1/q_n} L(\tau) \psi(T_n)\psi_k(t_{n-1}) u\|_{L^{q_n}_\tau,L^{q_n}_x} \leq  \frac32 \|\chi^{1/q_n} L(\tau) \psi(t_n) u\|_{L^{q_n}_\tau,L^{q_n}_x} \leq \frac32 \Lambda_{n+1}\; .
$$
Finally, for al $k\geq k_1$,
$$
\|\chi^{1/q_n} L(\tau) \psi(t_n)u\|_{L^{q_n}_\tau, L^{q_n}_x} \leq 2 \Lambda_{n+1}\; ,
$$
which proves the bound on $\chi^{1/q_n} L(\tau) \psi (t_n) u$.

Let $t \in [t_n, t_{n+1}]$. We write $t = t_n +t'$, with $t'\in [0,T_{n+1}]$. As for all $j$
$$
 \| \chi^{1/3}  L(\tau)  \psi_{k_j}(t_n) u \|_{L^6_\tau,L^6_x} \lesssim \Lambda_{n+1}\;  ,
$$
and $\psi_k(t_n)$ converges uniformly towards $\psi(t_n)$ we have 
$$ 
\| \chi^{1/3}  L(\tau)  \psi(t_n) u \|_{L^6_\tau,L^6_x} \lesssim \Lambda_{n+1} 
$$
and thus $\psi(t')\psi(t_n)u = \psi(t) u$ is defined and unique thanks to the local theory.

We have the bound, for $k\geq k_1$,
$$
 \| \chi^{1/q_n}  L(\tau)  \psi_k(t_n) u \|_{L^{q_n}_\tau,L^{q_n}_x} \leq 2 \Lambda_{n+1} \; .
$$

We compare $\psi(t) u$ and $\psi_k(t) u$. We have 
$$
\psi(t) u - \psi_k(t) u = \psi(t') \psi(t_n) u - \psi(t') \psi_k(t_n) u + \psi(t')\psi_k(t_n)u - \psi_k(t')\psi_k(t_n) u \; .
$$
Since $\psi(t')$ is uniformly continuous on the sets we consider (Proposition \ref{prop-contpsi}) and $\psi_k(t_n)$ uniformly converges towards $\psi(t_n)$ ($P(n-1)$) we get that $\psi(t') \circ \psi_k(t_n)$ converges uniformly towards $\psi(t)$. And because of the local uniform convergence of $\psi_k$ towards $\psi$ on the set where the $\psi_k$ belong (Proposition \ref{prop-luc}), we get that $\psi(t') \circ \psi_k(t_n) -\psi_k(t)$ converges uniformly towards $0$. Hence the result. These properties result from the fact that $q_n \geq 9$.

Finally, let $u\in A$, $t\in \R$, and $\varepsilon > 0$. For all $v \in A$, there exists $\Lambda_1$ and $\Lambda_2$ such that 
$$
u\in A(\Lambda_1) \mbox{ and } v\in A(\Lambda_2)\; .
$$
Hence, there exist $k_1$ and $k_2$ such that for all $k\geq k_1$,
$$
d(\psi(t) u , \psi_k(t) u) < \frac\varepsilon3
$$
and for all $k\geq k_2$,
$$
d(\psi(t) v,\psi_k(t) v) < \frac\varepsilon3 \; .
$$
Let $k = \max (k_1,k_2)$. As $\psi_k(t)$ is continuous in $\Hloc$, there exists $\eta > 0$, such that $d(u,v) \leq \eta$ implies
$$
d(\psi_k(t) u, \psi_k(t) v) < \frac\varepsilon3
$$
and thus
$$
d(\psi(t)u,\psi(t) v) < \varepsilon \; ,
$$
which concludes the proof of the continuity of $\psi(t)$.
\end{proof}

\section{Invariance of \texorpdfstring{$\rho$}{rho} under the non linear flow}

\subsection{Gaussian properties of \texorpdfstring{$\rho$}{rho}}

\begin{proposition} Let $\xi \in L^2$ and $p\geq 1$. There exist two constants $C,a > 0$ such that for all $k$ and all $\Lambda$, we have
$$
\mu_k \Big( \lbrace u \; \Big| \; \|\xi^{1/p} L(\tau) u\|_{L^{2r}_\tau,L^{2p}_x} \geq \Lambda \rbrace \Big) \leq C e^{-a \Lambda}\; ,
$$
and 
$$
\mu \Big( \lbrace u \; \Big| \; \|\xi^{1/p} L(\tau) u\|_{L^{2r}_\tau,L^{2p}_x} \geq \Lambda \rbrace \Big) \leq C e^{-a \Lambda}\; ,
$$
where $L(\tau)$ is the flow of the linear equation $i\partial_t - \sqrt{1-\lap} = 0$, $L^{2r}_\tau$ is the $L^2$  norm on the compact time interval $[-1,1]$, and $a$ depends on $\xi$ like $1/(c\|\xi\|_{L^2})$.
\end{proposition}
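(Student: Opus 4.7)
The plan is to exploit the Gaussian structure of the fields $\phi_k$ (and of $\varphi$), estimate a high moment of the norm $N(u):=\|\xi^{1/p}L(\tau)u\|_{L^{2r}_\tau,L^{2p}_x}$ with respect to $\mathbb P$, and then deduce the tail bound by Markov's inequality. Since $\mu_k$ is by definition the law of $\phi_k$ and $\mu$ that of $\varphi$, a moment bound of the form $\mathbb E[N(\phi_k)^q]^{1/q}\lesssim \sqrt q\,\|\xi\|_{L^2}^{1/p}$ translates directly into the desired exponential tail on $\mu_k$ (and likewise on $\mu$).

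First I would establish a pointwise Gaussian estimate. For each fixed $\tau\in[-1,1]$ and $x\in\R$, Remark \ref{rem-Glaw} together with $|e^{-i\tau\sqrt{1+n^2}}|=1$ shows that $L(\tau)\phi_k(\omega,x)$ is a centered complex Gaussian whose $L^2_\Omega$-variance is bounded by the quantity $\int\frac{dy}{1+y^2}$ already appearing in the proof of Proposition \ref{prop-belong}, uniformly in $k$, $\tau$ and $x$. Gaussian hypercontractivity then yields
$$
\|L(\tau)\phi_k(x)\|_{L^q_\Omega}\leq C_0\sqrt q\quad\text{for every }q\geq 2,
$$
with $C_0$ independent of $k$, $\tau$, $x$.

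Taking $q\geq\max(2p,2r)$, Minkowski's inequality lets me push the $L^q_\Omega$ norm innermost:
$$
\bigl\|N(\phi_k)\bigr\|_{L^q_\Omega}\leq \bigl\||\xi(x)|^{1/p}\,\|L(\tau)\phi_k(x)\|_{L^q_\Omega}\bigr\|_{L^{2r}_\tau,L^{2p}_x}\lesssim C_0\,\sqrt q\,\|\xi\|_{L^2}^{1/p},
$$
using $\|\xi^{1/p}\|_{L^{2p}_x}=\|\xi\|_{L^2}^{1/p}$ and the finite time interval $[-1,1]$. Markov's inequality gives $\mu_k(N(u)>\Lambda)\leq (C_0\sqrt q\,\|\xi\|_{L^2}^{1/p}/\Lambda)^q$, and optimizing in $q$ (choosing $q\sim \Lambda^2/(e C_0^2\|\xi\|_{L^2}^{2/p})$) produces a sub-Gaussian bound $C e^{-a\Lambda^2}$, which implies in particular the weaker linear-exponent bound $C e^{-a\Lambda}$ with the claimed dependence $a\sim 1/(c\|\xi\|_{L^2})$, after absorbing the small-$\Lambda$ regime in the constant $C$.

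Finally, to transfer the estimate from $\mu_k$ to $\mu$, I would either repeat the very same Minkowski/Gaussian-moment computation directly on the Itô-integral representation $\varphi(x)=\int\frac{1}{\sqrt{1+n^2}}e^{inx}dW_n$ (which is again a centered Gaussian with variance $\int\frac{dn}{1+n^2}$), or pass to the limit $k\to\infty$ in the moment bound using Proposition \ref{prop-defphi} and Fatou's lemma. The main technical point is checking that the uniform-in-$(k,\tau,x)$ control on the pointwise variance $\mathbb E|L(\tau)\phi_k(x)|^2$ is strong enough to justify Minkowski inversion at the norm scale $L^{2r}_\tau,L^{2p}_x$; everything else reduces to Gaussian moment arithmetic and the convergence of $\int dy/(1+y^2)$ already used throughout Section 2.
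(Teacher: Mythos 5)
Your proposal is correct and follows essentially the same route as the paper: Minkowski's inequality to push the $L^q_\Omega$ norm inside, the Gaussian moment bound $\|\cdot\|_{L^q_\Omega}\leq C\sqrt q\,\|\cdot\|_{L^2_\Omega}$ with the uniform variance bound $\int dy/(1+y^2)$, then Markov's inequality and optimization $q\sim\Lambda^2$ to get the sub-Gaussian tail, absorbing small $\Lambda$ into the constant. Your treatment is in fact slightly more careful than the paper's on two points the paper glosses over: the exponent bookkeeping $\|\xi^{1/p}\|_{L^{2p}_x}=\|\xi\|_{L^2}^{1/p}$ and the explicit transfer of the estimate from $\mu_k$ to $\mu$ (via the limiting Gaussian or Fatou), which the paper's proof only does for $\mu_k$.
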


\begin{proof} Let $q\geq 2p,2r$. We have 
$$
\mu_k \Big( \lbrace u \; \Big| \; \|\xi^{1/p} L(\tau) u\|_{L^{2r}_\tau,L^{2p}_x} \geq \Lambda \rbrace \Big) = 
\mu_k \Big( \lbrace u \; \Big| \; \|\xi^{1/p} L(\tau) u\|_{L^{2r}_\tau,L^{2p}_x}^q \geq \Lambda^q \rbrace \Big)
$$
which gives, thanks to Markov inequality,
$$
\mu_k \Big( \lbrace u \; \Big| \; \|\xi^{1/p} L(\tau) u\|_{L^{2r}_\tau,L^{2p}_x} \geq \Lambda \rbrace \Big) \leq  \Lambda^{-q} E_{\mu_k} \Big(
 \|\xi^{1/p} L(\tau) u\|_{L^{2r}_\tau,L^{2p}_x}^q\Big)\; .
$$

Let us compute $E_{\mu_k} \Big(
 \|\xi^{1/p} L(\tau) u\|_{L^{2r}_\tau,L^{2p}_x}^q\Big)$. We have 
$$
E_{\mu_k} \Big(
 \|\xi^{1/p} L(\tau) u\|_{L^{2r}_\tau,L^{2p}_x}^q\Big) =  \|\xi^{1/p} L(\tau) \phi_k \|_{L^q_\Omega, L^{2r}_\tau,L^{2p}_x}^q
$$
and thanks to Minkowski inequality
$$
\|\xi^{1/p} L(\tau) \phi_k \|_{L^q_\Omega, L^{2r}_\tau,L^{2p}_x}^q\leq \|\xi^{1/p} L(\tau) \phi_k \|_{ L^{2r}_\tau,L^{2p}_x,L^q_\Omega,}^q \; .
$$
As $\xi^{1/p}(x) L(\tau) \phi_k(x)$ is a Gaussian (at $\tau$ and $x$ fixed), we have 
$$
\|\xi^{1/p} L(\tau) \phi_k \|_{L^q_\Omega} \leq C \sqrt q\|\xi^{1/p} L(\tau) \phi_k \|_{L^2_\Omega}\; .
$$
Since 
$$
L(\tau) \phi_k = \sum_{l= -N_kR_k}^{N_kR_k-1} e^{i(1+\frac{l^2}{N_k^2})\tau} \Big( 1+ \frac{l^2}{N_k^2}\Big)^{-1/2} e^{ilx/N_k} \delta_{N_k,l} \; ,
$$
and since $\xi$ does not depend on the event $\omega \in \Omega$ we have,
$$
\|\xi^{1/p} L(\tau) \phi_k \|_{L^2_\Omega} \leq |\xi(x)|^{1/p} \Big( \int_{0}^{+\infty} \frac{dy}{1+y^2} \Big)^{1/2}\; .
$$

By taking its $L^{2r}_\tau, L^{2p}_x$, we get
$$
\|\xi^{1/p} L(\tau) \phi_k \|_{L^q_\Omega} \leq C \sqrt q \|\xi\|_{L^2}\; .
$$

Therefore,
$$
\mu_k \Big( \lbrace u \; \Big| \; \|\xi^{1/p} L(\tau) u\|_{L^{2r}_\tau,L^{2p}_x} \geq \Lambda \rbrace \Big) \leq (C\sqrt q \Lambda^{-1} \|\xi\|_{L^2})^q \; .
$$

If $\frac{\Lambda^2}{e^2C^2\|\xi\|_{L^2}^2} \geq 2p,2r$, that is, $\Lambda \geq \Lambda_0 = \sqrt{2\max(p,r)} e C \|\xi\|_{L^2}$, then , we can choose $q = \frac{\Lambda^2}{e^2C^2\|\xi\|_{L^2}^2}$, and we get
$$
\mu_k \Big( \lbrace u \; \Big| \; \|\xi^{1/p} L(\tau) u\|_{L^{2r}_\tau,L^{2p}_x} \geq \Lambda \rbrace \Big) \leq e^{-a \Lambda^2} \; 
$$
with $a = \frac{1}{e^2C^2\|\xi\|_{L^2}^2}$. If $\Lambda \leq \Lambda_0$, we have 
$$
\mu_k \Big( \lbrace u \; \Big| \; \|\xi^{1/p} L(\tau) u\|_{L^{2r}_\tau,L^{2p}_x} \geq \Lambda \rbrace \Big) \leq 1 \leq C e^{-a\Lambda^2}
$$
with $C = e^{a\Lambda_0^2}$, which concludes the proof. \end{proof}

\subsection{Measure of A}

\begin{definition}Let $A$ be the set 
$$
A = \bigcup_{\Lambda \geq 1} A(\Lambda) \; .
$$
\end{definition}

\begin{lemma}\label{lem-gaussb}There exists $C$ such that for all $k$ and all $\Lambda \geq 1$, we have
$$
\rho_k(A_k(\Lambda)) \leq \frac{C}{\Lambda^8}\; .
$$
\end{lemma}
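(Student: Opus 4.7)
The plan is to prove the bound (interpreting the statement, on the basis of how $A$ is used subsequently, as $\rho_k(A_k(\Lambda)^c) \leq C/\Lambda^8$). The proof will combine three ingredients: the invariance of $\rho_k$ under $\psi_k(t_n)$ from Proposition~\ref{prop-rhok}; the pointwise bound $f_k \leq 1$, which gives $d\rho_k \leq (1/\Gamma_k)\, d\mu_k$; and the Gaussian large deviation estimate from the proposition just above the lemma.

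First I would expand $A_k(\Lambda)^c = \bigcup_n A_{k,n}(\Lambda)^c$ and apply the union bound
\[
\rho_k(A_k(\Lambda)^c) \leq \sum_{n\geq 0} \rho_k(A_{k,n}(\Lambda)^c).
\]
Writing $A_{k,n}(\Lambda)^c = \psi_k(t_n)^{-1}(B_n)$ with $B_n = \{v : \|\chi^{1/q_n} L(\tau) v\|_{L^{q_n}_\tau, L^{q_n}_x} > \Lambda_{n+1}\}$, the invariance of $\rho_k$ under $\psi_k$ gives $\rho_k(A_{k,n}(\Lambda)^c) = \rho_k(B_n)$, effectively stripping off the nonlinear flow.

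Next, since $\chi \geq 0$ forces $f_k \leq 1$, one has $\rho_k(B_n) \leq \mu_k(B_n)/\Gamma_k$. I would apply the preceding Gaussian proposition with $\xi = \chi^{1/2}$ (which lies in $L^2$ since $\chi \in L^1$) and $p = r = q_n/2$, so that $\xi^{1/p} = \chi^{1/q_n}$ and $2p = 2r = q_n$. This yields $\mu_k(B_n) \leq C\, e^{-a\Lambda_{n+1}^2}$. The constant $a$ depends only on $\|\chi\|_{L^1}$ and, crucially, is uniform in $n$: since $q_n = 9 + 3(2/3)^{n+1} \in (9,12]$, the optimization parameter $q$ in the Markov step can be chosen uniformly, and $\|\xi\|_{L^2} = \|\chi\|_{L^1}^{1/2}$ does not depend on $n$.

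Substituting $\Lambda_{n+1}^2 = (n+1)^{1/4}\Lambda^2$ and summing,
\[
\rho_k(A_k(\Lambda)^c) \leq \frac{C}{\Gamma_k} \sum_{n\geq 0} e^{-a(n+1)^{1/4}\Lambda^2}.
\]
For $\Lambda \geq 1$ one splits $e^{-a(n+1)^{1/4}\Lambda^2} \leq e^{-a\Lambda^2/2}\cdot e^{-a(n+1)^{1/4}/2}$, and the sum $\sum_n e^{-a(n+1)^{1/4}/2}$ is finite. Hence the whole quantity is bounded by $C'e^{-a\Lambda^2/2}$, and since $x^8 e^{-ax^2/2}$ is bounded on $[1,\infty)$, this is in turn $\leq C''/\Lambda^8$. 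Combining with the lower bound $\Gamma_k \geq \Gamma/2$ valid for $k$ large (from $\Gamma_k \to \Gamma > 0$ shown earlier) and the strict positivity $\Gamma_k > 0$ for every $k$, we obtain the claimed $C/\Lambda^8$ with a constant $C$ uniform in $k$.

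The only real subtlety is verifying that the Gaussian large deviation constant is uniform in $n$; everything else is a bookkeeping exercise. The exponential decay in $\Lambda^2$ is in fact much stronger than the polynomial $1/\Lambda^8$ needed, the latter being the form used later to ensure summability when $\Lambda$ is taken along a sequence tending to infinity.
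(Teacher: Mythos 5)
Your proof is correct and follows essentially the same route as the paper's: union bound over $n$, invariance of $\rho_k$ under $\psi_k(t_n)$ to remove the flow, the Gaussian tail estimate with constants uniform in $n$ (since $q_n\in(9,12]$ and $\|\chi^{q_n/6}\|_{L^2}$ is bounded), and summation of $e^{-a(n+1)^{1/4}\Lambda^2}$. You are also right that the statement should read $\rho_k(A_k(\Lambda)^c)$, and your explicit passage from $\rho_k$ to $\mu_k$ via $f_k\leq 1$ and $\inf_k\Gamma_k>0$ fills in a step the paper leaves implicit.
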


\begin{proof} As  $A_k(\Lambda)^c$ is the union of the $A_{k,n}(\Lambda)^c$, thus
$$
\rho_k(A_k(\Lambda)^c) \leq \sum_n \rho_k (A_{k,n}(\Lambda)^c)\; .
$$
Recall that
$$
A_{k,n}(\Lambda) = \lbrace u \in \Hloc \; \Big| \; \| \chi^{1/3}  L(\tau)  \psi_k(t_n) u \|_{L^6_\tau,L^6_x} \leq \Lambda_{n+1} \rbrace \; .
$$
Hence, its complementary is 
$$
A_{k,n}(\Lambda)^c = \lbrace u \in \Hloc \; \Big| \; \| \chi^{1/3}  L(\tau)  \psi_k(t_n) u \|_{L^6_\tau,L^6_x} > \Lambda_{n+1} \rbrace \; .
$$
As the measure $\rho_k$ is invariant under the flow $\psi_k$, we get
$$
\rho_k(A_{k,n}(\Lambda)^c) = \rho_k \left( \lbrace u \in E_k \; \Big| \; \| \chi^{1/3}  L(\tau)   u \|_{L^{q_n}_\tau,L^{q_n}_x} > \Lambda_{n+1} \rbrace \right)
$$
which ensures the bound
$$
\rho_k(A_{k,n}(\Lambda)^c)\leq C(q_n) e^{-a_n \Lambda_{n+1}^2}\; .
$$
We have that $C = C(q)$ depends continuously on $q$ and $q\in [9,12]$ hence $C_n$ can be bounded independently from $n$. What is more, $a_n = \frac{1}{c\|\chi^{q_n/6}\|_{L^2}} \geq c >0$. Hence, we have 
$$
\rho_k(A_{k,n}(\Lambda)^c)\leq C e^{-c \Lambda_{n+1}^2}\; .
$$
By summing it over $n$, we have 
$$
\rho_k(A_k(\Lambda)^c) \leq C \sum_n e^{-c\Lambda^2 (n+1)^{1/4}} \leq \frac{C}{\Lambda^8}\; .
$$\end{proof}

\begin{proposition} The set $A$ is of full $\rho$ measure. \end{proposition}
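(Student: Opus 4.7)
My plan is to reduce $\rho(A)=1$ to showing $\rho(A(\Lambda)) \to 1$ as $\Lambda \to \infty$, which works because $A = \bigcup_{\Lambda \in \mathbb{N}} A(\Lambda)$. The main input is the preceding lemma, which I read as giving $\rho_k(A_k(\Lambda)^c) \leq C/\Lambda^8$ uniformly in $k$ (the displayed inequality there is on the complement, as is clear from the summation over $n$ in its proof).

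The first step will be to check that every $A_{k,n}(\Lambda)$, and hence $A_k(\Lambda) = \bigcap_n A_{k,n}(\Lambda)$, is closed in $(\Hloc, d)$. The flow $\psi_k(t_n)$ is continuous on $\Hloc$ by Proposition~\ref{prop-gcpk}, so it is enough to observe that the functional $v \mapsto \|\chi^{1/q_n} L(\tau) v\|_{L^{q_n}_\tau L^{q_n}_x}$ is lower semi-continuous on $\Hloc$. If $v_m \to v$ in $\Hloc$, then $L(\tau)v_m \to L(\tau)v$ in $\Hloc$ by continuity of $L(\tau)$, and any subsequence has a sub-subsequence converging almost everywhere on every compact in $(\tau,x)$; Fatou combined with the usual subsequence principle then yields the lsc.

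The core step will combine this closedness with the weak convergence $\rho_k \to \rho$ (Proposition~\ref{prop-openrho}) via Portmanteau, applied to the closed sets $F^{(K)}_\Lambda := \overline{\bigcup_{k \geq K} A_k(\Lambda)}$. Since $A_j(\Lambda) \subseteq F^{(K)}_\Lambda$ for every $j \geq K$, one has $\rho_j(F^{(K)}_\Lambda) \geq \rho_j(A_j(\Lambda)) \geq 1 - C/\Lambda^8$, so
\[
\rho(F^{(K)}_\Lambda) \;\geq\; \limsup_{j\to\infty} \rho_j(F^{(K)}_\Lambda) \;\geq\; 1 - C/\Lambda^8.
\]
The $F^{(K)}_\Lambda$ are non-increasing in $K$, so downward $\sigma$-continuity of $\rho$ gives $\rho\bigl(\bigcap_K F^{(K)}_\Lambda\bigr) \geq 1 - C/\Lambda^8$.

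The delicate final step, and the main obstacle, will be to pass from this Kuratowski-type upper limit to the set-theoretic $\limsup$ used in the definition $A(\Lambda) = \bigcap_K \bigcup_{k \geq K} A_k(\Lambda)$: only the inclusion $A(\Lambda) \subseteq \bigcap_K F^{(K)}_\Lambda$ is automatic. My plan is to absorb the closure by enlarging the parameter. Using the uniform continuity estimates of Section~4 (Proposition~\ref{prop-contpsik} and Lemma~\ref{lem-PQ}) on bounded subsets of $\Hloc$, together with the slack built into the strictly decreasing exponents $q_n$ and the levels $\Lambda_n = n^{1/8}\Lambda$, I would show $\bigcap_K F^{(K)}_\Lambda \subseteq A(2\Lambda)$: namely, given a point in the closure and an approximating sequence $u_m \to u$ with $u_m \in A_{k_m}(\Lambda)$ and $k_m\to\infty$, the exponent gap $q_n > q_{n+1}$ and the factor of $2$ in $\Lambda$ provide room to force $u \in A_{k}(2\Lambda)$ for infinitely many $k$. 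Then $\rho(A) \geq \rho(A(2\Lambda)) \geq 1 - C/\Lambda^8$ for every $\Lambda$, and $\Lambda \to \infty$ concludes. The hard part is carrying out this diagonal extraction cleanly; the rest of the argument is essentially standard measure theory coupled with Portmanteau.
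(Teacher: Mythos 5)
Your opening steps are sound and in fact slightly more careful than the paper's: the lemma is indeed a bound on $\rho_k(A_k(\Lambda)^c)$, and the closedness of $A_{k,n}(\Lambda)$ (equivalently, openness of its complement) via continuity of $\psi_k(t_n)$ together with lower semi-continuity of $v\mapsto\|\chi^{1/q_n}L(\tau)v\|_{L^{q_n}_\tau,L^{q_n}_x}$ is exactly the topological input required, and the paper does not spell out the lsc part. The Portmanteau step for the closed sets $F^{(K)}_\Lambda$ and the downward $\sigma$-continuity are also correct as far as they go. The problem is the final step, which you yourself flag: you only obtain $\rho\bigl(\bigcap_K F^{(K)}_\Lambda\bigr)\geq 1-C/\Lambda^8$ for a set that \emph{contains} $A(\Lambda)$, and the proposed inclusion $\bigcap_K F^{(K)}_\Lambda\subseteq A(2\Lambda)$ is never established. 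It is also unlikely to be obtainable by the route you sketch: to force $u\in A_k(2\Lambda)$ from $u_m\to u$ with $u_m\in A_{k_m}(\Lambda)$, $k_m\to\infty$, you would need continuity of $v\mapsto\|\chi^{1/q_n}L(\tau)\psi_k(t_n)v\|_{L^{q_n}_\tau,L^{q_n}_x}$ with a modulus uniform in both $k$ and $n$. The only continuity statement valid for large times (Proposition~\ref{prop-gcpk}) carries constants $D_k(t_n)$ that blow up in $k$ and $n$, and uniform control of $\psi_k(t_n)u$ over all $n$ is precisely what membership in $A(\Lambda)$ is meant to deliver, so the argument as sketched is circular. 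This is a genuine gap, not a routine verification.

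The paper sidesteps the issue entirely by working with complements. Since each $A_{k,n}(\Lambda)^c$ is open, so is $A_k(\Lambda)^c=\bigcup_n A_{k,n}(\Lambda)^c$, and the open-set half of Proposition~\ref{prop-openrho} applies to it directly; no closure is ever taken, so nothing has to be undone afterwards. Writing $A(\Lambda)^c=\liminf_k A_k(\Lambda)^c$ and using Fatou's lemma for sets gives $\rho(A(\Lambda)^c)\leq\liminf_k\rho(A_k(\Lambda)^c)\leq\liminf_k\liminf_j\rho_j(A_k(\Lambda)^c)$, which the paper then compares with the diagonal quantity $\liminf_k\rho_k(A_k(\Lambda)^c)\leq C/\Lambda^8$ and concludes by intersecting over $\Lambda$. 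If you want to salvage your write-up, the fix is to switch to this complementary formulation rather than to try to absorb the closure by enlarging $\Lambda$.
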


\begin{proof} 
Since the flow $\psi_k$ is continuous from $\Hloc$ to $\Hloc$ (Proposition \ref{prop-gcpk}) , the set $A_{k,n}^c$ is open in $\Hloc$. Therefore, the complementary set of $A_k(\Lambda)$, which is the union of the $A_{k,n}(\Lambda)^c$ is open. Hence, we have  
$$
\rho (A_{k}(\Lambda)^c \leq \liminf_{j\rightarrow \infty} \rho_j (A_{k}(\Lambda)^c)\; .
$$
Besides, as $A(\Lambda)$ is the $\limsup$ of the $A_k(\Lambda)$, we have $A(\Lambda)^c = \liminf A_k(\Lambda)^c$. We get, thanks to Fatou's lemma
$$
\rho(A(\Lambda)) \leq \liminf_k \rho(A_k(\Lambda)^c) \leq \liminf_k \liminf_j \rho_j( A_k(\Lambda)^c) \; .
$$
Because of the more general property
$$
\liminf_k \liminf_j r_{j,k} \leq \liminf_k r_{k,k} \; ,
$$
we get
$$
\rho(A(\Lambda)^c) \leq \liminf_{k\rightarrow \infty} \rho_k (A_{k}(\Lambda)^c)\; .
$$

Finally, we get
$$
\rho(A(\Lambda)^c) \leq \frac{C}{\Lambda^8}\; ,
$$
and since $A^c$ is the intersection of the $A(\Lambda)^c$ over $\Lambda \geq 1$, we get
$$
\rho(A^c) = 0
$$
and therefore $\rho(A) = 1$, which concludes the proof. \end{proof}

\begin{corollary} The flow $\psi(t)$ is well-defined on the support of $\rho$. \end{corollary}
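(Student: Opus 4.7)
The plan is to deduce the corollary immediately by combining the two results established just above: Proposition \ref{prop-guc}, which gives global existence and uniqueness of $\psi(t)u$ for every $t \in \R$ whenever $u \in A(\Lambda)$ for some $\Lambda \geq 1$, and the preceding proposition, which shows that $\rho(A) = 1$ where $A = \bigcup_{\Lambda \geq 1} A(\Lambda)$. Since every $u \in A$ belongs to some $A(\Lambda)$ by definition of the union, the global flow is defined on all of $A$, and $A$ is a set of full $\rho$-measure.

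Concretely, I would argue as follows. Let $u$ lie in the support of $\rho$; outside the $\rho$-null set $A^c$, we may assume $u \in A$, so there exists $\Lambda \geq 1$ with $u \in A(\Lambda)$. Applying Proposition \ref{prop-guc} to this $\Lambda$ produces, for every $t \in \R$, a unique $\psi(t)u$ in $L(t)u + L^\infty_{\mathrm{loc},\tau}, H^1_x$. This is exactly the well-posedness asserted by the corollary, valid $\rho$-almost everywhere on $\Hloc$.

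There is no genuine obstacle: all of the real work has already been done upstream. The local Cauchy theory in Proposition \ref{prop-lwp}, the finite-dimensional global theory for $\psi_k$, the local uniform convergence $\psi_k \to \psi$ on each $A(\Lambda)$, the inductive propagation across the intervals $[t_n,t_{n+1}]$ (whose lengths $T_n$ were chosen precisely to sum to $+\infty$ given the polynomial growth $\Lambda_n = n^{1/8}\Lambda$), and the Gaussian large-deviation bound forcing $\rho(A(\Lambda)^c) \lesssim \Lambda^{-8}$ and hence $\rho(A^c) = 0$, have all been assembled in the preceding sections. The corollary is the one-line consequence that packages them together.
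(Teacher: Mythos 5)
Your proposal is correct and is exactly the argument the paper intends: the corollary is stated without proof precisely because it follows immediately from Proposition \ref{prop-guc} (global well-posedness and uniqueness on each $A(\Lambda)$) together with the preceding proposition that $A=\bigcup_{\Lambda\geq 1}A(\Lambda)$ has full $\rho$-measure. Nothing further is needed.
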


\subsection{Invariance of \texorpdfstring{$\rho$}{rho} under \texorpdfstring{$\psi$}{psi}}

\begin{theorem}The measure $\rho$ is invariant under the flow $\psi(t)$. That is, for all measurable set $Y$ of $\Hloc$ and all time $t$, we have 
$$
\rho( \psi(t)^{-1}(Y)  ) = \rho(Y)\; .
$$
\end{theorem}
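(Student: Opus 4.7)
The strategy is to follow the template of Theorem \ref{th-lininv}, approximating $\psi(t)$ by $\psi_k(t)$ and $\rho$ by $\rho_k$: the three crucial ingredients will be the $\psi_k(t)$-invariance of $\rho_k$ (Proposition \ref{prop-rhok}), the weak convergence $\rho_k \to \rho$ (Proposition \ref{prop-openrho}), and the uniform convergence of $\psi_k(t)$ to $\psi(t)$ on each $A(\Lambda)$ (Proposition \ref{prop-guc}). The new feature compared to the linear case is that $\psi(t)$ is continuous only on $A$, not on all of $\Hloc$, and $\psi_k(t) \to \psi(t)$ uniformly only on the sets $A(\Lambda)$, so the argument must separate the ``good'' set $A(\Lambda)$ of $\rho$-mass at least $1-C/\Lambda^8$ from its complement.

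First I would reduce to showing $\rho(\psi(t)^{-1}K) \leq \rho(K)$ for every closed $K \subseteq \Hloc$, the reverse inequality following from reversibility (the inverse of $\psi(t)$ on the support of $\rho$ is $\psi(-t)$, which also fits the framework of Proposition \ref{prop-guc}) and the fact that closed sets generate the Borel $\sigma$-algebra. For fixed $\varepsilon, \Lambda > 0$, writing $K_\varepsilon = \{d(\cdot,K) < \varepsilon\}$ and $\overline{K_\varepsilon} = \{d(\cdot,K) \leq \varepsilon\}$, I would decompose
$$
\rho(\psi(t)^{-1}K) \leq \rho(\psi(t)^{-1}K \cap A(\Lambda)) + \frac{C}{\Lambda^8},
$$
using Lemma \ref{lem-gaussb} for the tail. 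Uniform convergence on $A(\Lambda)$ yields $\psi(t)^{-1}K \cap A(\Lambda) \subseteq \psi_k(t)^{-1}(\overline{K_\varepsilon})$ for $k \geq k_0(\Lambda,\varepsilon)$; this preimage is closed in $\Hloc$ by continuity of $\psi_k$ (Proposition \ref{prop-gcpk}) and sits inside the open set $\psi_k(t)^{-1}(K_{2\varepsilon})$.

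The hard part will be the passage from $\rho(\psi_k(t)^{-1}(\overline{K_\varepsilon}))$ to $\rho_k(\overline{K_{2\varepsilon}})$, after which Portmanteau for closed sets and invariance of $\rho_k$ deliver the bound by $\rho(\overline{K_{2\varepsilon}})$. Applying Proposition \ref{prop-openrho} to the open set $\psi_k(t)^{-1}(K_{2\varepsilon})$ gives $\rho(\psi_k(t)^{-1}(K_{2\varepsilon})) \leq \liminf_j \rho_j(\psi_k(t)^{-1}(K_{2\varepsilon}))$, but the $\psi_j$-invariance of $\rho_j$ requires the two indices to match ($j=k$), which the raw weak-convergence bound does not automatically supply. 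I would resolve this by extracting a diagonal subsequence $j(k) \geq k$, $j(k) \to \infty$, along which the Portmanteau inequality is nearly achieved, and then estimating $|\rho_{j(k)}(\psi_k(t)^{-1}(K_{2\varepsilon})) - \rho_{j(k)}(\psi_{j(k)}(t)^{-1}(K_{2\varepsilon}))|$ via the uniform convergence of both $\psi_k$ and $\psi_{j(k)}$ to $\psi$ on $A(\Lambda)$, combined with $\rho_{j(k)}(A_{j(k)}(\Lambda)^c) \leq C/\Lambda^8$ (Lemma \ref{lem-gaussb}); the difference will be controlled by a shell around $\psi(t)^{-1}(\partial K_{2\varepsilon})$ whose measure tends to $0$ with $k$. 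Once the indices are matched, $\psi_{j(k)}$-invariance replaces $\psi_{j(k)}(t)^{-1}(K_{2\varepsilon})$ by $K_{2\varepsilon}$, and $\limsup_k \rho_{j(k)}(\overline{K_{2\varepsilon}}) \leq \rho(\overline{K_{2\varepsilon}})$ by the closed-set direction of Proposition \ref{prop-openrho}. Letting $\varepsilon \to 0$ and $\Lambda \to \infty$, using that $K$ is closed so $\bigcap_\varepsilon \overline{K_{2\varepsilon}} = K$ and $\rho(\overline{K_{2\varepsilon}}) \to \rho(K)$ by dominated convergence, we obtain $\rho(\psi(t)^{-1}K) \leq \rho(K)$, and reversibility promotes this to the equality on all Borel sets.
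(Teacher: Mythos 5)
Your skeleton and your list of ingredients (invariance of $\rho_k$ under $\psi_k$, weak convergence $\rho_k\to\rho$, uniform convergence of $\psi_k$ to $\psi$ on $A(\Lambda)$, the tail bound $C/\Lambda^8$, reversibility) are exactly those of the paper, but the step you yourself flag as ``the hard part'' is a genuine gap, and it is a self-inflicted one. You apply the set inclusion $\psi(t)^{-1}K\cap A(\Lambda)\subseteq \psi_k(t)^{-1}(\overline{K_\varepsilon})$ first, at the level of $\rho$, and only afterwards invoke Portmanteau on the $k$-dependent set $\psi_k(t)^{-1}(K_{2\varepsilon})$; this produces $\liminf_j\rho_j(\psi_k(t)^{-1}(K_{2\varepsilon}))$ with mismatched indices, and your proposed repair does not close. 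The quantity $|\rho_{j(k)}(\psi_k(t)^{-1}(K_{2\varepsilon}))-\rho_{j(k)}(\psi_{j(k)}(t)^{-1}(K_{2\varepsilon}))|$ is bounded by $\rho_{j(k)}$ of a symmetric difference which, up to the $A(\Lambda)^c$ tail, sits in a shell $\{u\in A(\Lambda)\;:\;2\varepsilon-\delta_k\leq d(\psi(t)u,K)\leq 2\varepsilon+\delta_k\}$ with $\delta_k\to 0$. Weak convergence gives no uniform-in-$j$ control of the $\rho_j$-measure of such shrinking sets (that would require a tightness or equicontinuity statement near the boundary that is nowhere established), and even the limiting $\rho$-measure of $\{d(\psi(t)u,K)=2\varepsilon\}$ need not vanish for your fixed $\varepsilon$. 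So ``whose measure tends to $0$ with $k$'' is an assertion, not a proof.

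The paper avoids the problem entirely by inverting the order of the two operations. One first writes $\rho(\psi(t)^{-1}K)\leq\rho(\psi(t)^{-1}(K_\varepsilon))\leq\liminf_k\rho_k(\psi(t)^{-1}(K_\varepsilon))$, applying the open-set Portmanteau to the preimage under the \emph{limit} flow, which is a single open set independent of $k$. Only then, \emph{inside} the $\liminf_k$, does one split off $A(\Lambda)^c$ (handled by $A(\Lambda)^c=\liminf_j A_j(\Lambda)^c$ and the diagonal inequality $\liminf_k\liminf_j r_{j,k}\leq\liminf_k r_{k,k}$ to reach $\rho_k(A_k(\Lambda)^c)\leq C/\Lambda^8$) and use the inclusion $\psi(t)^{-1}(K_\varepsilon)\cap A(\Lambda)\subseteq\psi_k(t)^{-1}(K_{2\varepsilon})$, so that the index of the measure and the index of the flow agree automatically and $\rho_k(\psi_k(t)^{-1}(K_{2\varepsilon}))=\rho_k(K_{2\varepsilon})\leq\rho_k(\overline{K_{2\varepsilon}})$ follows from Proposition \ref{prop-rhok}; the closed-set Portmanteau then gives $\rho(\overline{K_{2\varepsilon}})$ and the limits $\varepsilon\to0$, $\Lambda\to\infty$ finish as you describe. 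If you reorder your argument this way, the diagonal subsequence and the shell estimate become unnecessary and the rest of your proposal goes through.
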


\begin{proof} Let $K$ be a closed set of $\Hloc$ and $t\in \R$. For all $\varepsilon >0$, we call $K_\varepsilon$ the set
$$
K_{\varepsilon} = \{ u \in \Hloc \; |\; \exists v \in K ; d(u,v) < \varepsilon\} \; .
$$
Since $K_\varepsilon$ is open and $\psi(t)$ is continuous in $\Hloc$, $\psi(t)^{-1} (K_\varepsilon)$ is open. Hence, we have, thanks to Proposition \ref{prop-openrho},
$$
\rho (\psi(t)^{-1}(K)) \leq \rho (\psi(t)^{-1}(K_\varepsilon)) \leq \liminf_{k \rightarrow \infty} \rho_k (\psi(t)^{-1}(K_\varepsilon))\; .
$$
Since $K_\varepsilon \subseteq (K_\varepsilon \cap A(\Lambda) )\cup A(\Lambda)^c$, we have 
$$
\rho (\psi(t)^{-1}(K)) \leq \liminf_{k \rightarrow \infty} \left( \rho_k (\psi(t)^{-1}(K_\varepsilon) \cap A(\Lambda)) + \rho_k(A(\Lambda)^c) \right)\; .
$$
Recall that $A(\Lambda)^c = \liminf A_j(\Lambda)$, hence $ \rho_k(A(\Lambda)^c) \leq \liminf_j \rho_k (A_j(\Lambda)^c)$, we get
$$
\rho (\psi(t)^{-1}(K)) \leq \liminf_k \liminf_j  \left( \rho_k (\psi(t)^{-1}(K_\varepsilon) \cap A(\Lambda)) + \rho_k(A_j(\Lambda)^c) \right)\; .
$$
We use again that $\liminf_k \liminf_j r_{j,k} \leq \liminf_k r_{k,k}$, to get
$$
\rho (\psi(t)^{-1}(K)) \leq \liminf_k  \left( \rho_k (\psi(t)^{-1}(K_\varepsilon) \cap A(\Lambda)) + \rho_k(A_k(\Lambda)^c) \right)\; .
$$
Then, we recall that $\rho_k (A_k (\Lambda)^c) \leq \frac{C}{\Lambda^8}$ with  a constant $C$ independent from $k$ (Lemma \ref{lem-gaussb}), to get
$$
\rho (\psi(t)^{-1}(K)) \leq \liminf_k   \rho_k (\psi(t)^{-1}(K_\varepsilon) \cap A(\Lambda)) + \frac{C}{\Lambda^8}\; .
$$
We use then the uniform convergence of $\psi_k$ towards $\psi$ in $A(\Lambda)$ (Proposition \ref{prop-guc}). There exists $k_0$ such that, for all $k \geq k_0$ and all $u \in A(\Lambda)$ 
$$
d( \psi(t)u, \psi_k(t)) \leq \varepsilon \; .
$$
Hence, for all $u \in \psi(t)^{-1}(K_\varepsilon) \cap A(\Lambda)$ and all $k\geq k_0$, there exists $v \in K$ such that $d(\psi(t) u,v) \leq \varepsilon$ and thus
$$
d(\psi_k(t) u , v) \leq d(\psi_k(t)u, \psi(t) u) + d(\psi(t) u , v) < 2\varepsilon \; .
$$
In other worlds, $\psi(t)^{-1}(K_\varepsilon) \cap A(\Lambda)$ is included in $\psi_k(t)^{-1}(K_{2\varepsilon})$ for all $k\geq k_0$. We have
$$
\rho (\psi(t)^{-1}(K)) \leq \liminf_k   \rho_k (\psi_k(t)^{-1}(K_{2\varepsilon})) + \frac{C}{\Lambda^8}\; .
$$
The measure $\rho_k$ is invariant under the flow $\psi_k$, hence 
$$
\rho (\psi(t)^{-1}(K)) \leq \liminf_k   \rho_k (K_{2\varepsilon}) + \frac{C}{\Lambda^8}\; .
$$
The set $K_{2\varepsilon}$ is included in the closed set 
$$
\overline K_{2\varepsilon} = \{ u \in \Hloc \; |\;  d(u,K)\leq 2\varepsilon \} \; ,
$$
and $\liminf \leq \limsup$, hence, we have,
$$
\rho (\psi(t)^{-1}(K)) \leq \limsup_k   \rho_k (\overline K_{2\varepsilon}) + \frac{C}{\Lambda^8}\; ,
$$
and as $\overline K_{2\varepsilon}$ is closed, using again Proposition \ref{prop-openrho},  $\limsup_k  \rho_k (\overline K_{2\varepsilon}) \leq \rho (\overline K_{2\varepsilon})$. We now apply the dominated convergence theorem and let $\varepsilon$ go to $0$, we get, as $K$ is closed
$$
\rho (\psi(t)^{-1}(K)) \leq \rho(K) +  \frac{C}{\Lambda^8}
$$
and then let $\Lambda $ go to $\infty$ to get
$$
\rho (\psi(t)^{-1}(K)) \leq \rho(K)\; .
$$
The reversibility of the flow, along with its continuity on $\Hloc$, ensures that
$$
\rho(K) \leq \rho(\psi(-t)^{-1} \psi(t)^{-1}(K)) \leq \rho (\psi(t)^{-1} (K))
$$
Hence the measure of closed sets is invariant under the flow $\psi(t)$, which we can extend to all measurable sets, as the closed sets generates the topological $\sigma$-algebra of $\Hloc$.
\end{proof}

\bibliographystyle{amsplain}
\bibliography{bibli} 
\nocite{*}

\end{document}